\DeclareMathOperator{\bl}{bl}
\DeclareMathOperator{\diag}{diag}
\DeclareMathOperator{\fun}{fun}
\DeclareMathOperator{\id}{id}
\DeclareMathOperator{\Kill}{Kill}
\DeclareMathOperator{\Res}{Res}
\DeclareMathOperator{\rtype}{rtype}
\DeclareMathOperator{\std}{std}
\DeclareMathOperator{\Spec}{Spec}
\DeclareMathOperator{\Tor}{Tor}
\DeclareMathOperator{\Stand}{Stand}
\DeclareMathOperator{\StandType}{StandType}
\DeclareMathOperator{\Transp}{Transp}
\DeclareMathOperator{\STransp}{STransp}
\DeclareMathOperator{\GL}{GL}
\DeclareMathOperator{\SL}{SL}
\DeclareMathOperator{\SO}{SO}
\DeclareMathOperator{\Uu}{U}
\newcommand{\bC}{\mathbb{C}}
\newcommand{\bQ}{\mathbb{Q}}
\newcommand{\bR}{\mathbb{R}}
\newcommand{\bZ}{\mathbb{Z}}
\newcommand{\cA}{\mathcal{A}}
\newcommand{\cB}{\mathcal{B}}
\newcommand{\cI}{\mathcal{I}}
\newcommand{\cJ}{\mathcal{J}}
\newcommand{\cL}{\mathcal{L}}
\newcommand{\cM}{\mathcal{M}}
\newcommand{\cP}{\mathcal{P}}
\newcommand{\cO}{\mathcal{O}}
\newcommand{\cT}{\mathcal{T}}
\newcommand{\fg}{\mathfrak{g}}
\newcommand{\fS}{\mathfrak{S}}
\theoremstyle{plain}
\newtheorem{thm}{Theorem}[section]
\newtheorem{defn-prop}[thm]{Definition-Proposition}
\newtheorem{cor}[thm]{Corollary}
\newtheorem{lem}[thm]{Lemma}
\newtheorem{prop}[thm]{Proposition}
\newtheorem{var}[thm]{Variant}
\theoremstyle{definition}
\newtheorem{ass}[thm]{Assumption}
\newtheorem{cons}[thm]{Construction}
\newtheorem{defn}[thm]{Definition}
\newtheorem{ex}[thm]{Example}
\newtheorem{rem}[thm]{Remark}
\begin{document}
	\title[Uniform decomposition]{Uniform decomposition of the flag scheme by a symmetric subgroup action}
	\author{Takuma Hayashi}
	\address{Osaka Central Advanced Mathematical Institute, Osaka Metropolitan University, 3-3-138 Sugimoto, Sumiyoshi-ku Osaka 558-8585, Japan}
	\email{takuma.hayashi.forwork@gmail.com}
	\date{}
	\subjclass[2020]{14L15, 14L30, 20G35.}
	\keywords{reductive group scheme, $\theta$-fixed point subgroup scheme, orbit decomposition, descent, affine immersion.}
	\begin{abstract}
		In this paper, we establish a scheme-theoretic analog of the works of Matsuki and Richardson--Springer on the symmetric subgroup orbit decomposition of the flag variety under a certain assumption that asserts local constancy of their combinatorial description of the classification of orbits at geometric points (the local constancy hypothesis). We also prove that scheme-theoretic models of orbits are affinely imbedded into the flag scheme under the same assumption (Beilinson--Bernstein's affinity theorem). Finally, we compute some classical examples to give scheme-theoretic consequences of the geometric point free and descent phenomena of orbit decompositions.
	\end{abstract}
	
	\maketitle
	
	\section{Introduction}\label{sec:intro}
	
	In representation theory of real reductive Lie groups, the orbit decomposition of the complex flag variety of a connected complex reductive algebraic group with respect to the action by a symmetric subgroup plays an important role due to the Beilinson--Bernstein correspondence (\cite[Section 3]{MR610137}). For the duality theorem on the cohomological induction and the Harish-Chandra modules obtained by the direct image functor along the orbits, the affinity of the imbedding maps which was proved by Beilinson--Bernstein is also important in technical aspects (\cite[4.3. Theorem, Setion A.3.3, 4.1. Proposition]{MR910203}).
	
	Matsuki and Richardson--Springer studied combinatorial aspects of the classification of the orbits. In particular, they proved the following result:
	
	\begin{thm}[{\cite[Example 1, Theorem 3]{MR527548}, \cite[2.7 Proposition (i), 2.8 Proposition]{MR1066573}}]\label{thm:matsuki}
		Let $G$ be a connected reductive algebraic group over an algebraically field $F$ of characteristic not two, equipped with an involution $\theta$. Let $K\subset G$ be the $\theta$-fixed point subgroup. Let $\cB_G$ be the flag variety of $G$. Fix a system $\cT$ of complete representatives of $K$-conjugacy classes of $\theta$-stable maximal tori of $G$. For each $H\in\cT$, we let $W(\Delta(G,H))$ be the Weyl group of $(G,H)$. Let $W_K(\Delta(G,H))$ be the subgroup of $W(\Delta(G,H))$ consisting of elements represented by $F$-points of the normalizer $N_K(H)$ of $H$ in $K$. Choose any system $W(\Delta(G,H))'$ of complete representatives of $W_K(\Delta(G,H))\backslash W(\Delta(G,H))$. Set $V=\coprod_{H\in\cT} W(\Delta(G,H))'$. For each $H\in\cT$, fix a Borel subgroup $B_H$ containing $H$. For each $v\in W(\Delta(G,H))'\subset V$, let $\cO_v$ be the $K$-orbit in $\cB_G$ attached to the twist of $B_H$ by $v$. Then we have $\cB_G(F)=\coprod_{v\in V} \cO_v(F)$.
	\end{thm}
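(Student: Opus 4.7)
My plan is to follow the classical strategy of Matsuki and Richardson--Springer. The argument splits into surjectivity, asserting every Borel of $G$ over $F$ lies in some $\cO_v(F)$, and injectivity, asserting distinct $v\in V$ yield distinct $K$-orbits.

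For surjectivity, the essential step is Matsuki's existence lemma: every $K$-orbit on $\cB_G$ contains a Borel subgroup $B$ which itself contains a $\theta$-stable maximal torus of $G$. A standard proof chooses $B$ extremizing a numerical invariant within the orbit, for instance $\dim(B\cap\theta(B))$, and argues, by analyzing the interaction of $\theta$ with the root datum of $B$, that at an extremum some maximal torus $T\subset B$ must be $\theta$-stable. Once the lemma is established, a given Borel $B'$ is $K$-conjugated to contain some $\theta$-stable maximal torus $H'$; by finiteness of $K$-conjugacy classes of $\theta$-stable maximal tori together with the choice of $\cT$, a further $K$-conjugation moves $H'$ to some $H\in\cT$. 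Since the Borels containing $H$ form a torsor under $W(\Delta(G,H))=N_G(H)/H$ by conjugation, the resulting Borel equals $nB_Hn^{-1}$ for a unique $v\in W(\Delta(G,H))$, and reducing $v$ modulo the left action of $W_K(\Delta(G,H))$ places it in the chosen system $W(\Delta(G,H))'$.

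For injectivity, suppose $kn_1B_{H_1}n_1^{-1}k^{-1}=n_2B_{H_2}n_2^{-1}$ for some $k\in K(F)$. Then $kH_1k^{-1}$ and $H_2$ are both $\theta$-stable maximal tori of the common Borel $n_2B_{H_2}n_2^{-1}$. The key subsidiary lemma, also due to Matsuki, asserts that any two $\theta$-stable maximal tori of a common Borel are $K$-conjugate. Using this, $H_1$ and $H_2$ are $K$-conjugate, hence equal by the choice of $\cT$; write $H:=H_1=H_2$. A further adjustment allows us to take $k\in N_K(H)$, giving $n_2^{-1}kn_1\in N_G(B_H)\cap N_G(H)=B_H\cap N_G(H)=H$, using $N_G(B_H)=B_H$ and the self-normalization of maximal tori inside their Borels. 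Passing to the quotient $W(\Delta(G,H))=N_G(H)/H$, this forces $v_2v_1^{-1}=[k]\in W_K(\Delta(G,H))$ and hence $v_1=v_2$ in $W(\Delta(G,H))'$.

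The principal obstacle I anticipate is Matsuki's existence lemma; its proof requires delicate control of the $\theta$-action on the root subgroups of an extremal Borel, and the verification that extremality forces a $\theta$-stable Cartan is the technical heart of the argument. The auxiliary lemma on $\theta$-stable maximal tori in a common Borel is similarly Matsuki-style, proved by a reduction to the unipotent radical and a careful cohomological analysis for $\bZ/2$ acting on a torus, exploiting $\mathrm{char}\,F\neq 2$ together with algebraic closedness of $F$. The remaining combinatorial bookkeeping on the Weyl group and Bruhat decomposition is routine.
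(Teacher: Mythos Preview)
The paper does not give its own proof of this theorem: it is quoted in the introduction as a classical result, with citations to Matsuki \cite{MR527548} and Richardson--Springer \cite{MR1066573}, and is then used as input for the scheme-theoretic lifting arguments later on. So there is no in-paper proof to compare against.

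That said, your outline is the standard one and is essentially correct. Two small points are worth tightening. First, for the existence lemma you invoke (every Borel contains a $\theta$-stable maximal torus), the paper itself appeals to Steinberg \cite[7.5 Theorem]{MR0230728} rather than to Matsuki's extremality argument; either route works, and Steinberg's is cleaner. Second, your subsidiary lemma as stated---that two $\theta$-stable maximal tori of a common Borel $B$ are $K$-conjugate---is slightly too weak for the ``further adjustment'' you make afterwards. What you actually need (and what is proved in the references, e.g.\ \cite[4.4 Corollary]{MR803346}) is that they are conjugate by an element of $(B\cap\theta(B))^\theta\subset K\cap B$; only then does replacing $k$ by $k'k$ with $k'\in K\cap B$ preserve the equality of Borels while forcing $k'k\in N_K(H)$. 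Your parenthetical about reducing to the unipotent radical suggests you have the right proof in mind, but the statement should reflect it. With these two clarifications your sketch matches the classical argument.
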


	Recently, studies on $(\fg,K)$-modules over commutative rings have been developed by authors. In particular, Harris proposed in \cite[Theorem 2.1.3]{MR3053412} to obtain rational forms of irreducible Harish-Chandra modules through certain equivariant twisted D-modules on the flag variety by taking forms of the construction in \cite[Section 3]{MR610137} as their geometric aspects. This was amended by \cite{MR4073199} for rationality patterns of their geometric ingredients including the orbits in the flag variety. Subsequently, the theory of twisted D-modules over schemes was developed in \cite[Section 1-3]{hayashijanuszewski} to provide a general formalism of a geometric construction of $(\fg,K)$-modules over commutative rings (schemes).
	
	For integrality patterns of orbits of $\theta$-stable parabolic subgroups, we proved in \cite{hayashijanuszewski} that for a reductive group scheme $G$ over a $\bZ[1/2]$-scheme $S$ with an involution $\theta$, the \'etale quotient of the moduli scheme $\cP^\theta_G$ of $\theta$-stable parabolic subgroups of $G$ by the $\theta$-fixed point subgroup scheme $K\subset G$ is represented by a finite \'etale $S$-scheme if $\pi_0(K)$ is so (\cite[Theorem 4.1.7]{hayashijanuszewski}). This was achieved by establishing an \'etale local combinatorial classification of geometric $K$-conjugacy classes of $\theta$-stable parabolic subgroups. In particular, if $S=\Spec F$ for an algebraically closed field $F$ of characteristic not two, the quotient morphism $rt:\cP^\theta_G\to K\backslash\cP^\theta_G$ classifies the $K$-conjugacy classes of $\theta$-stable parabolic subgroups, i.e., the fibers of $rt$ are the $K$-orbits in $\cP^\theta_G$. As an application, we achieved a geometric construction of arithmetic forms of cohomologically induced modules and the descent of their rings of definition simultaneously (\cite[Section 5]{hayashijanuszewski}).
	
	The advantage of this sheaf-theoretic approach is that we can guarantee the existence of the $S$-form of the orbit decomposition without discussing the explicit descent datum on orbit decompositions over \'etale loci of $S$. We also remark that due to the descent, the $S$-forms of the orbits may not have base points. In particular, they are not orbits over $S$. See \cite[Section 0.4]{hayashijanuszewski} for an elementary example.
	
	The aim of this paper is to find a finite \'etale $S$-scheme which \'etale locally parameterizes $K$-orbits in the flag scheme $\cB_G$ (= the moduli scheme of Borel subgroups of $G$) and knows the base of each local $K$-orbit by giving a $K$-orbit decomposition \'etale locally in the fashion of Theorem \ref{thm:matsuki} as an analog of \cite[Theorem 4.1.7]{hayashijanuszewski} and then by taking the quotient.
	
	Motivated by the appearance of (complete representatives of) all the $K$-conjugacy classes of $\theta$-stable maximal tori in Theorem \ref{thm:matsuki}, we start with an analog of \cite[Theorem 4.1.7]{hayashijanuszewski} for $\theta$-stable maximal tori:
	
	\begin{thm}[{Section \ref{sec:tor}}]\label{thm:tor}
		Let $G$ be a reductive group scheme over a $\bZ[1/2]$-scheme $S$, equipped with an involution $\theta$. Let $K\subset G$ be the $\theta$-fixed point subgroup scheme.
		\begin{enumerate}
			\item The fppf quotient of the moduli scheme $\Tor_G^\theta$ of $\theta$-stable maximal tori of $G$ by $K$ is represented by a finite \'etale $S$-scheme.
			\item If $S=\Spec F$ for an algebraically closed field $F$ of characteristic not two, the quotient map $\Tor_G^\theta\to K\backslash\Tor_G^\theta$ classifies the $K$-conjugacy classes of $\theta$-stable maximal tori.
		\end{enumerate}
	\end{thm}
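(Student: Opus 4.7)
My plan follows the template of \cite[Theorem 4.1.7]{hayashijanuszewski} for $\theta$-stable parabolic subgroups, adapted to the case of $\theta$-stable maximal tori. The moduli scheme $\Tor_G$ is smooth of finite type over $S$, and since $2$ is invertible and $\theta$ is an involution, the closed subscheme $\Tor_G^\theta$ of $\theta$-fixed points is itself smooth over $S$; the symmetric subgroup $K$ acts on it by conjugation. The aim is to show that the fppf quotient $K\backslash\Tor_G^\theta$ is represented by a finite \'etale $S$-scheme, and to identify its geometric points with $K$-conjugacy classes.

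For part (2), over an algebraically closed field $F$ of characteristic not two the required classification is classical: by Richardson--Springer \cite{MR1066573}, $K(F)$-conjugacy classes of $\theta$-stable maximal tori are controlled by twisted $W$-conjugacy classes in the Weyl group and form a finite set. Since $\Tor_G^\theta$ is smooth of finite type and each geometric $K$-orbit is locally closed of the same dimension as its ambient component, the orbits are automatically open (hence open and closed); the fibers of $\Tor_G^\theta \to K\backslash\Tor_G^\theta$ over $F$ then coincide with them.

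For part (1), I would construct the quotient \'etale-locally on $S$ by a \emph{type} map. Working \'etale-locally I may assume that $G$ has constant root datum and that a $\theta$-stable maximal torus $T_0 \subset G$ is defined over $S$: existence \'etale-locally follows from smoothness of $\Tor_G^\theta$ over $S$ together with Steinberg-type results giving geometric surjectivity of $\Tor_G^\theta \to S$, since a smooth surjective morphism admits sections \'etale-locally. To any $T \in \Tor_G^\theta(S')$, fppf-locally of the form $gT_0g^{-1}$, I attach the class of $g^{-1}\theta(g)$ in the finite double coset set $N_K(T_0)\backslash N_G(T_0)/T_0$, which is independent of the fppf-local choice of $g$ and defines a $K$-invariant morphism $\Tor_G^\theta \to V$ to a finite \'etale $S$-scheme $V$. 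Faithfully flat descent then realizes $V$ as the fppf quotient $K\backslash\Tor_G^\theta$.

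The main obstacle will be showing that the type map has $K$-orbits as its fppf-local fibers, not merely as its geometric ones. Concretely, two $\theta$-stable maximal tori of the same type over some \'etale cover must be $K$-conjugate fppf-locally, which reduces to a surjectivity statement for an appropriate $\theta$-twisted conjugation map on $N_G(T_0)$; I expect this to follow from smoothness of that twisted conjugation action together with the triviality of the relevant torus-valued cohomology on fppf-local neighbourhoods. In contrast to the main flag-scheme theorem of the paper, no separate local constancy hypothesis should be required here: the combinatorial data classifying $\theta$-stable maximal tori depends only on the root datum and on the induced action of $\theta$ thereon, both already \'etale-locally constant on $S$, which is why the theorem can be stated unconditionally.
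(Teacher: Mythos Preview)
Your overall strategy---construct an \'etale-local type map to a finite constant scheme and show its fibers are $K$-orbits---matches the paper's. But the specific invariant you propose is miscomputed, and the paper executes the plan along a different combinatorial route.

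\textbf{The invariant is not well-defined as you state it.} If $T=gT_0g^{-1}$ and you replace $g$ by $gn$ with $n\in N_G(T_0)$, then $g^{-1}\theta(g)$ becomes $n^{-1}\bigl(g^{-1}\theta(g)\bigr)\theta(n)$. This is twisted conjugation by $N_G(T_0)$, not the left-right action of $N_K(T_0)\times T_0$; the double coset set $N_K(T_0)\backslash N_G(T_0)/T_0$ is simply the wrong target. The correct target is the set $\{x\in N_G(T_0):x\theta(x)=e\}$ modulo $x\mapsto n^{-1}x\theta(n)$. You would then need to argue that this set of twisted conjugacy classes is \'etale-locally constant (it descends to twisted involutions in the Weyl group, but the passage from $N_G(T_0)$-classes to $W$-classes is not automatic and needs a cohomological argument you have not supplied).

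\textbf{The paper takes a different route.} Rather than the twisted-involution invariant, the paper uses Richardson--Springer's classification from \cite[Section~9]{MR1066573}: it first \'etale-locally produces a pair $(B_0,H_0)$ with $B_0$ in the \emph{open} $K$-orbit of $\cB_G$ (this step uses a descent argument for the open orbit and Hensel's lemma), and then attaches to a $\theta$-stable torus $H$ the $W_0$-conjugacy class in $\cJ(\Psi_0)$ determined by how $H^-$ sits inside $H_0^-$. The point of this choice is that $\cJ(\Psi_0)$ and $W_0$ are read off from the restricted root system of $(G,H_0^-)$, which is visibly locally constant once a splitting compatible with $\theta$ is fixed. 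Your remark that the combinatorics ``depends only on the root datum and the induced action of $\theta$'' is morally right, but the paper makes it precise through this specific invariant, not through twisted involutions.

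\textbf{The fibers-are-orbits step.} You sketch this via ``smoothness of a twisted conjugation action together with triviality of torus-valued cohomology,'' which is vague. The paper handles it cleanly and uniformly: for maximal tori (type~(R) subgroups), the transporter $\STransp_K(H,H')$ is smooth over $S$ by a direct application of the fixed-point smoothness principle to $\STransp_G(H,H')$; surjectivity at geometric points then gives \'etale-local sections, hence \'etale-local $K$-conjugacy. Combined with a general lemma (smooth $K$-invariant surjection to a finite \'etale target with single-orbit geometric fibers yields the quotient), this finishes the proof without any explicit cohomology computation.

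Finally, your argument for part~(2) asserts that each geometric $K$-orbit has the same dimension as its ambient component of $\Tor_G^\theta$. This is not obvious a priori (stabilizers $N_K(H)$ can have varying dimension), and the paper does not argue this way: part~(2) falls out of the proof of~(1) once one knows the type map is an \'etale-local orbit decomposition.
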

	
	We prove this by lifting corresponding results at geometric points (see \cite[Section 9]{MR1066573}) locally in the \'etale topology of $S$. In fact, we locally find $\theta$-stable maximal tori, of which $\Tor_G^\theta$ is decomposed into the $K$-orbits by the disjoint union as an $S$-scheme (to be precise $S'$-scheme for an \'etale $S$-scheme $S$).
	
	One might expect that we could prove a similar result for the whole of the flag scheme by following Theorem \ref{thm:matsuki} as is. For test cases, we gave set-theoretic decomposition of the flag scheme for $\SL_2$ and $\SL_3$ (\cite[Section 0.4]{hayashijanuszewski} and \cite[Theorem 1.1]{hayashikgb}). However, our expectation fails even in these cases because of nontrivial closure relations among $K$-orbits in the flag scheme. In fact, the quotient of the flag scheme by $K$ is not representable in general:
	
	\begin{ex}
		Put $S=\Spec\bC$, where $\bC$ is the field of complex numbers. Set $G=\SL_2$ and $\theta=((-)^T)^{-1}$, where $(-)^T$ is the transpose map. Let $\cB_G$ be the flag variety of $G$. Then $\theta$ induces an involution on $\cB_G$, and we denote its fixed point subvariety by $\cB^\theta_G$. Then we have a monomorphism
		$K\backslash\cB^\theta_G\hookrightarrow K\backslash\cB_G$.
		One can easily show $\Spec\bC\coprod \Spec\bC\cong K\backslash\cB^\theta_G$.
		
		If $K\backslash\cB_G$ is representable, this is also a categorical quotient. It however turns out that $K\backslash\cB_G\cong S$ since the unique open $K$-orbit is scheme-theoretically dense in $\cB_G$. It is evident that the canonical morphism $\Spec\bC\coprod \Spec\bC\to \Spec\bC$ is not a monomorphism. This shows that $K\backslash\cB_G$ is not representable. Though the categorical quotient exists, this is not what we want in the sense that it collapses information on the orbit decomposition due to the nontrivial closure relations.
	\end{ex}
	
	We realize from this observation that the decomposition of Theorem \ref{thm:matsuki} is not as a variety but as a set. Therefore we cannot obtain a finite \'etale $S$-scheme representing ``the structure of the fppf local $K$-orbit decomposition'' by the categorical or sheaf-theoretic quotient.
	
	In the situation of the Bruhat decomposition, this issue was resolved in \cite{MR0218364} by introducing the notion of standard position for pairs of parabolic subgroups to define a bijective subobject of the (double) total flag scheme. For example, if we are given a Borel subgroup $B$, the quotient of the moduli scheme $\cB^{B\mathrm{-}\std}_{G}$ ($=\underline{\mathrm{Par}}_\emptyset(G;B)$ in the notation of \cite[Section 4.5.4]{MR0218364}) of Borel subgroups which are at standard position with $B$ is represented by a finite \'etale $S$-scheme (\cite[Sections 4.5.4, 4.5.5]{MR0218364}). In fact, $\cB^{B\mathrm{-}\std}_{G}$ is \'etale locally the disjoint union of $B$-orbits (the Bruhat cells) (\cite[Section 4.5.5]{MR0218364}).
	
	Following this idea, let us introduce:
	
	\begin{defn}[cf.~{\cite[Definition 4.1.6]{hayashijanuszewski}}]
		\begin{enumerate}
			\item Let $\cB^{\theta\mathrm{-}\std}_G$ be the moduli space of Borel subgroups $B$ which are at standard position with $\theta(B)$ (see \eqref{eq:defn} for details).
			\item Let $rt:\cB^{\theta\mathrm{-}\std}_G\to K\backslash \cB^{\theta\mathrm{-}\std}_G\reflectbox{$\coloneqq$}\rtype_\emptyset(G,\theta)$ denote the fppf quotient map.
			\item For $v\in \rtype_\emptyset(G,\theta)(S)$, set $\cB^{\theta\mathrm{-}\std}_{G,v}\coloneqq rt^{-1}(v)$.
			Let $i_v:\cB^{\theta\mathrm{-}\std}_{G,v}\hookrightarrow \cB_G$ denote the inclusion map.
		\end{enumerate}
	\end{defn}
	
	The $S$-space $\cB^{\theta\mathrm{-}\std}_G$ is expected to be the $S$-form of the disjoint union of local $K$-orbits. The quotient $\rtype_\emptyset(G,\theta)$ is expected to classify local $K$-orbits and know their local bases. Then we obtain $S$-forms of local $K$-orbits by (3). In this paper, we study these objects to discuss that our expectations are true within the framework of the theory of schemes.
	
	The easy part is that $\cB^{\theta\mathrm{-}\std}_G$ is represented by a smooth $S$-scheme (Proposition \ref{prop:smooth}). Our wish is to prove that $\rtype_\emptyset(G,\theta)$ is represented by a finite \'etale $S$-scheme. We also wish to guarantee that $rt$ locally classifies the $K$-orbits. We plan to achieve these by lifting the combinatorial classification of Theorem \ref{thm:matsuki}. For this, let us introduce a scheme-theoretic analog of $W_K(\Delta(G,H))$: For a $\theta$-stable maximal torus $H$ of $G$, set $W_K(G,H)\coloneqq N_{K}(H)/(H\cap K)$,
	where $N_{K}(H)$ is the normalizer of $H$ in $K$ (see the notation section below if necessary). The quotient is taken in the fppf topology. Towards the representability by a finite \'etale $S$-scheme, let us think of:
	
	\begin{ass}[Local constancy hypothesis]\label{ass}
		For all $\theta\times_S S'$-stable maximal tori $H'$ over \'etale $S$-schemes $S'$, $W_{K\times_S S'}(G\times_S S',H')$ are finite \'etale group $S'$-schemes.
	\end{ass}
	
	\begin{rem}\label{rem:closed}
		For any $\theta$-stable maximal torus $H$ of $G$, $W_K(G,H)$ is an affine open subgroup scheme of the Weyl group scheme $W(G,H)$ (Corollary \ref{cor:opensubgroup}). Therefore $W_K(G,H)$ is represented by a finite \'etale group $S$-scheme if and only if it is a closed subgroup scheme of $W(G,H)$. An expected way to verify that $W_K(G,H)$ is finite \'etale is to find a base-free combinatorial description of its geometric fibers possibly after passage to fppf localization. Though it should be expected in the standard examples of \cite{MR4627704}, generalities may not exist (cf.~\cite[2.10 Remarks]{MR1066573}).
	\end{rem}

	\begin{ex}[Remark \ref{rem:localconstancy}]\label{ex:field}
		The local constancy hypothesis holds if $S=\Spec F$ for a field $F$ of characteristic not two.
	\end{ex}
	
	We are now ready to state the main results of this paper:
	
	\begin{thm}[Sections \ref{sec:thm1,2} and \ref{sec:bb}]\label{mainthm}
		Consider the same setting as Theorem \ref{thm:tor}.
		\begin{enumerate}
			\item If the local constancy hypothesis hods, the fppf sheaf $\rtype_\emptyset(G,\theta)$ is represented by a finite \'etale $S$-scheme.
			\item If $S=\Spec F$ for an algebraically closed field $F$ of characteristic not two, $rt$ classifies the $K$-conjugacy classes of Borel subgroups (recall Example \ref{ex:field} and (1)).
			\item Assume that the fppf sheaf $\rtype_\emptyset(G,\theta)$ is represented by a finite \'etale $S$-scheme. Then for every $S$-point $v$ of $\rtype_\emptyset(G,\theta)$, $i_v$ is an affine immersion.
		\end{enumerate} 
	\end{thm}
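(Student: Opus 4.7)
The plan is to prove (1) and (2) simultaneously by realizing $\rtype_\emptyset(G,\theta)$ \'etale locally as an explicit finite \'etale $S$-scheme built from the Weyl group data appearing in Theorem \ref{thm:matsuki}, and to deduce (3) by reducing it to the classical Beilinson--Bernstein affinity theorem on geometric fibers.

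For (1) and (2), I would first invoke Theorem \ref{thm:tor} to pass to an \'etale cover $S'\to S$ on which a finite system $\cT=\{H_1,\ldots,H_n\}$ of $\theta$-stable maximal tori of $G_{S'}$ representing all $K_{S'}$-conjugacy classes of such tori is given. For each $H\in\cT$ choose a Borel $B_H\supset H$; since $H$ is $\theta$-stable, $\theta(B_H)$ also contains $H$, so $B_H\in\cB^{\theta\mathrm{-}\std}_{G_{S'}}(S')$. Under the local constancy hypothesis, each $W_K(G_{S'},H)$ is a finite \'etale closed subgroup of the finite \'etale Weyl group $W(G_{S'},H)$ (Remark \ref{rem:closed}), so each quotient $W_K(G_{S'},H)\backslash W(G_{S'},H)$ is finite \'etale over $S'$. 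I would then construct a morphism of fppf sheaves
\[
\varphi\colon \coprod_{H\in\cT}W_K(G_{S'},H)\backslash W(G_{S'},H)\longrightarrow \rtype_\emptyset(G,\theta)_{S'},\quad [w]\mapsto[w\cdot B_H],
\]
defined via fppf-local lifts of $w$ to $N_G(H)$, noting that the action of $W_K(G,H)$ on $[B_H]\in K\backslash \cB_G$ is trivial so $\varphi$ is well defined. The source is finite \'etale, so once $\varphi$ is shown to be an isomorphism of fppf sheaves, (1) follows. For that I would check bijectivity on geometric fibers (and conclude by descent), where $\varphi$ becomes exactly the classification of Theorem \ref{thm:matsuki}. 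Statement (2) drops out at the same time, since over $\Spec F$ the source of $\varphi$ is the Matsuki parameter set and $\varphi$ identifies it with $rt$ on $F$-points.

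For (3), the condition of being an affine immersion is \'etale local on the target, so one may pass to the finite \'etale cover $\rtype_\emptyset(G,\theta)\to S$ available by hypothesis; on this cover $v$ splits into sections and $\cB^{\theta\mathrm{-}\std}_{G,v}$ becomes a clopen component of $\cB^{\theta\mathrm{-}\std}_G$. Since standard position is an open condition on pairs of Borels (cf.\ \cite[Section 4.5]{MR0218364}), $\cB^{\theta\mathrm{-}\std}_G\hookrightarrow\cB_G$ is open, so $i_v$ is an open immersion, in particular an immersion. For the affineness half, I would invoke the classical Beilinson--Bernstein theorem on each geometric fibre of $S$, where $i_v$ becomes the affine imbedding of a $K$-orbit. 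To promote this to affineness over $S$, I would use that the closed complement of $\cB^{\theta\mathrm{-}\std}_{G,v}$ in $\cB_G$ is fiberwise a divisor and, from smoothness and properness of $\cB_G\to S$ together with the smoothness of $\cB^{\theta\mathrm{-}\std}_G$ (Proposition \ref{prop:smooth}), forms a relative effective Cartier divisor, whose complement is affine over $S$.

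The hardest step is showing that $\varphi$ in (1) is an isomorphism of fppf sheaves rather than merely a fiberwise bijection: one must realize $\varphi$ scheme-theoretically via fppf-local lifts of Weyl group elements and verify that the choices of $\cT$ and of $B_H$ glue compatibly under fppf descent. The affineness half of (3) is also nontrivial and relies on promoting fiberwise Beilinson--Bernstein to a relative statement about the complement being an effective Cartier divisor.
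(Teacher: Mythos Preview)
Your outline for (1) and (2) is close in spirit to the paper's, but the step you flag as ``hardest'' is indeed a genuine gap that you do not close. A map of fppf sheaves from a finite \'etale scheme to a quotient sheaf that is bijective on geometric points need not be an isomorphism; the target is not yet known to be a scheme, so ``descent'' has no leverage here. The paper resolves this not by analyzing the quotient map $\varphi$ directly, but by first proving that the orbit decomposition of $\cB^{\theta\mathrm{-}\std}_G$ itself lifts scheme-theoretically: the key Lemma~\ref{lem:transB} shows that for $B,B'\in\cB^{\theta\mathrm{-}\std}_G(S)$ the transporter $\STransp_K(B,B')$ is smooth over $S$, whence (Proposition~\ref{prop:transp}) geometric $K$-conjugacy of Borels lifts to \'etale-local $K$-conjugacy. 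This is exactly what forces $\coprod_{[c],w} K/(wB_{[c]}w^{-1}\cap K)\to\cB^{\theta\mathrm{-}\std}_G$ to be an isomorphism of schemes, after which one simply takes the $K$-quotient. Your proposal contains no analogue of this ``closure relation free'' step, and without it the argument does not go through.

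Your approach to (3) contains an actual error: the inclusion $\cB^{\theta\mathrm{-}\std}_G\hookrightarrow\cB_G$ is \emph{not} an open immersion. Standard position is not an open condition on pairs of Borels; rather, $\Stand_{G,\emptyset,\emptyset}\hookrightarrow\cB_G\times_S\cB_G$ is a bijective monomorphism whose source is the disjoint union of locally closed $G$-orbits indexed by the Weyl group (this is the content of the Bruhat stratification). Consequently $i_v$ is only a locally closed immersion, and for non-open orbits its complement is not a divisor, so your Cartier-divisor argument cannot work (closed orbits already have codimension $>1$). The paper's argument is entirely different: it introduces the Springer map $\varphi\colon\cB^{\theta\mathrm{-}\std}_G\to\StandType_{G,\emptyset,\emptyset}$, proves directly (Lemma~\ref{lem:affine}) that each fiber $\varphi^{-1}(w)$ is affinely imbedded in $\cB_G$ by relating it to a Bruhat cell in $G\times^B\cB_G$, and then observes that under the hypothesis of (3) each $rt^{-1}(v)$ is open and closed in some $\varphi^{-1}(w)$. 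No reduction to geometric fibers or Beilinson--Bernstein over $\bC$ is invoked; the affineness is established uniformly over $S$ from the affineness of Bruhat cells.
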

	
	Part (1) is the scheme-theoretic counterpart of Theorem \ref{thm:matsuki}. For the proof, we plan to lift Theorem \ref{thm:matsuki} to establish an orbit decomposition \'etale locally. We will use the proof of Theorem \ref{thm:tor} (particularly, local complete system of representatives of geometric $K$-conjugacy classes of $\theta$-stable maximal tori) and the local constancy hypothesis to verify that $\cB^{\theta\mathrm{-}\std}_G$ is \'etale locally ``closure relation free'' (Lemma \ref{lem:transB}). Then we take the quotient to win. We note that (1) generalizes \cite[Theorem 4.1.7]{hayashijanuszewski} inside the flag scheme for symmetric subgroups and \cite[Theorem 1.1]{hayashikgb}. One can readily see (2) from the proof of (1). Part (3) is the scheme-theoretic counterpart of Beilinson--Bernstein's affinity theorem. This is verified by lifting the proof of Beilinson--Bernstein's argument. In fact, they divided the flag variety into affinely imbedded $K$-invariant subvarieties which are described as fibers of the Springer map $\varphi$ in \cite{MR803346}\footnote{In his original definition, Springer took the quotient by $K$ for the domain. Since we do not know that the quotient is representable on the course of our proof, we do not take the quotient for the definition of $\varphi$.}. Then they proved that the $K$-orbits inside them are open and closed. In our proof, we divide $\cB^{\theta\mathrm{-}\std}_G$ scheme-theoretically into fibers of the scheme-theoretic enhancement of $\varphi$ which we denote by the same symbol. Under the assumption of (3), fibers of $\varphi$ are disjoint unions of $K$-orbits since $\varphi$ is a $K$-invariant morphism to a finite \'etale $S$-scheme. In particular, the orbits are open and closed in the fibers of $\varphi$.
	
	\begin{rem}
		In the original proof of the affinity in \cite{MR910203} after Beilinson--Bernstein, $\varphi$ did not appear apparently. In fact, their and our parameterizations of the decomposition at the first stage of the proof are equivalent but slightly different. Their relation is explained in Remark \ref{rem:comparison}. The Springer map appeared in \cite{MR803346} for study of $K$-orbits in the flag variety. Then Richardson--Springer promoted the ideas in \cite{MR803346} to give further combinatorial results on the $K$-orbit decomposition of the flag variety. We adopt the use of $\varphi$ for direct applications of then (see below).
	\end{rem}

	The remaining difficulty in applications to examples is to check the local constancy hypothesis as mentioned in Remark \ref{rem:closed}. Inspired by the proof of (3), we can prove the assertions in Theorem \ref{mainthm} without the hypothesis but under a stronger assumption at geometric points. To formulate it, let us note an easy fact that $\varphi$ is smooth (Lemma \ref{lem:varphi_smooth}). Hence the image of $\varphi$ is endowed with the structure of an open subscheme of the target of $\varphi$, which we denote by $\cI'_{G,\theta}$. In the statement below, for a geometric point $\bar{s}$ of $S$, let $(-)_{\bar{s}}$ denote the geometric fiber at $\bar{s}$.
	
	\begin{var}[Section \ref{sec:var}]\label{var:mainthm}
		Consider the same setting as Theorem \ref{thm:tor}. 
		\begin{enumerate}
			\item The $S$-scheme $\cI'_{G,\theta}$ is finite \'etale.
			\item Assume that for each geometric point $\bar{s}$ of $S$, the fiber of $\varphi_{\bar{s}}$ at every point of $\cI'_{G,\theta,\bar{s}}$ consists of a single $K_{\bar{s}}$-orbit. Then the morphism $\varphi$ gives rise to an isomorphism from the fppf quotient sheaf $\rtype_\emptyset(G,\theta)$ onto $\cI'_{G,\theta}$. In particular, $\rtype_\emptyset(G,\theta)$ is represented by a finite \'etale $S$-scheme.
		\end{enumerate}
	\end{var}
	
	In this setting, we can skip the latter part of the former proof of (3) since \'etale locally the fibers of $\varphi$ give a $K$-orbit decomposition of $\cB^{\theta\mathrm{-}\std}_G$. We prove (1) by lifting the combinatorial description of $\cI'_{G,\theta,\bar{s}}$ in \cite[7.13 Theorem]{MR1066573}. The basic idea for (2) is as follows: The assumption says that $\varphi$ geometric-fiberwisely gives the orbit decomposition by the fibers of $\varphi$. We guarantee its local constancy by (1). As a result, we \'etale locally obtain a decomposition of $\cB^{\theta\mathrm{-}\std}_G$ into open subschemes, each of which consists of a single orbit at every geometric point. We lift base points \'etale locally to express them as orbits.

	\begin{rem}
		Since the formations of taking open subschemes respect smooth schemes over a fixed base $S$, we can readily see that the theorems stated above hold if we replace $K$ with any open and closed subgroup scheme (also replace $K$ in Assumption \ref{ass} accordingly). Then we should replace $\rtype_\emptyset(G,\theta)$ with $\rtype_\emptyset(G,\theta,K)$ for the symbol of the fppf quotient $K\backslash\cB^{\theta\mathrm{-}\std}_G$. On the other hand, $K\backslash\Tor_G^\theta$ in Theorem \ref{thm:tor} is independent of this replacement from its proof. In our discussions below, we only treat the fixed point subgroup scheme for simplicity.
	\end{rem}

	At the end of this paper, we see some concrete examples (Section \ref{sec:ex}). In particular, we confirm that Theorem \ref{mainthm} and Variant \ref{var:mainthm} are scheme-theoretic consequences of the ground algebraically closed field free feature of the orbit classifications in \cite[Section 10]{MR1066573}. We also note that working over smaller base rings is related to the descent phenomena of orbits studied in \cite[Theorem 4.1.7]{hayashijanuszewski} and \cite[Theorem 1.1]{hayashikgb}.
	
	We would like to close this section with an application to representation theory. As a consequence of the present work, we can generalize the construction of the standard Harish-Chandra sheaves in \cite[Section 2]{MR910203} by using the theory of twisted D-modules over schemes (\cite[Theorems 3.9.2 and 3.10.1]{hayashijanuszewski}):

	\begin{cons}[Standard Harish-Chandra sheaf]
		Let $G$ be a reductive group scheme over a commutative Noetherian $\bZ[1/2]$-algebra of finite Krull dimension, equipped with an involution $\theta$. Write $\fg$ for the Lie algebra of $\fg$. Let $K\subset G$ be an open and closed subcheme of the $\theta$-fixed point subgroup scheme. Let $\cA$ be a $G$-equivariant tdo on $\cB_G$ (\cite[Sections 1.1, 1.4]{hayashijanuszewski}).
		
		Suppose that the fppf quotient sheaf $\rtype_\emptyset(G,\theta,K)=K\backslash\cB^{\theta\mathrm{-}\std}_G$ is represented by a finite \'etale $k$-scheme. Let $v\in \rtype_\emptyset(G,\theta,K)(k)$. Let $\cM$ be a $K$-equivariant quasi-coherent left $i^\cdot_v\cA$-module (see \cite[Section 1.3.3]{hayashijanuszewski} for definitions). Then we obtain a $K$-equivariant quasi-coherent left $\cA$-module $(i_v)_+\cM$ (\cite[Section 3.7, Theorem 3.9.2]{hayashijanuszewski}). Take the global sections to obtain a $(\fg,K)$-module $\Gamma(\cB_G,(i_v)_+\cM)$ in the sense of \cite{MR4007195} by \cite[Theorem 3.10.1]{hayashijanuszewski}.
	\end{cons}
	
	The finiteness assumption on $k$ is for a nice behavior of the direct image functor $i_+$ (\cite[Theorem 3.9.2, Example A.7.4]{hayashijanuszewski}). We deduce the following assertion from \cite[Theorems 3.7.30 and 3.10.1]{hayashijanuszewski}:
	
	\begin{thm}
		The constructions $(i_v)_+\cM$ and $\Gamma(\cB_G,(i_v)_+\cM)$ commute with flat base changes.
	\end{thm}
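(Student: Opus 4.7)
The plan is to reduce both assertions to the flat base-change compatibility of the direct image functor along an affine immersion (\cite[Theorem 3.7.30]{hayashijanuszewski}) and of the global-sections functor on the flag scheme (\cite[Theorem 3.10.1]{hayashijanuszewski}). The substantive work consists of verifying that the input data $(i_v,\cA,\cM)$ themselves commute with flat base change, after which the cited theorems apply directly.

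Let $k\to k'$ be a flat morphism within the allowed class of base rings. The formation of $\cB^{\theta\mathrm{-}\std}_G$ commutes with arbitrary base change, because the standard-position condition is defined as an open subscheme of a fibered product of flag schemes, each ingredient of which base-changes in the usual way. Since fppf sheafification also commutes with base change, the fppf quotient $\rtype_\emptyset(G,\theta,K)$ base-changes to $\rtype_\emptyset(G_{k'},\theta_{k'},K_{k'})$; by hypothesis the former is finite \'etale over $k$, so the base change is finite \'etale over $k'$ and represents the corresponding quotient. The point $v$ induces a $k'$-point $v_{k'}$, and the identification $(\cB^{\theta\mathrm{-}\std}_{G,v})_{k'}=\cB^{\theta\mathrm{-}\std}_{G_{k'},v_{k'}}$ follows by taking fibers. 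Since immersions and affine morphisms are both stable under base change, $(i_v)_{k'}$ is again an affine immersion, coinciding with $i_{v_{k'}}$.

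The $G$-equivariant tdo $\cA$ pulls back to a $G_{k'}$-equivariant tdo $\cA_{k'}$ on $\cB_{G_{k'}}$ by the $\cO$-linear nature of the construction in \cite[Sections 1.1, 1.4]{hayashijanuszewski}, and $\cM$ pulls back to a $K_{k'}$-equivariant quasi-coherent left $i^\cdot_{v_{k'}}\cA_{k'}$-module $\cM_{k'}$. Then \cite[Theorem 3.7.30]{hayashijanuszewski}, applied to the affine immersion $i_v$ and the flat morphism $k\to k'$, furnishes a canonical isomorphism $((i_v)_+\cM)_{k'}\simeq (i_{v_{k'}})_+\cM_{k'}$, giving the first assertion. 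Composing with \cite[Theorem 3.10.1]{hayashijanuszewski} for the flat base change of global sections of $K$-equivariant quasi-coherent modules on $\cB_G$ then yields the second. The main obstacle is not any single step but rather ensuring that the identifications listed above — of the quotient sheaf, of the $v$-component, of the tdo, and of the equivariant module — are genuinely functorial and canonical, so that they fit into the functoriality already encoded in the cited base-change theorems; this is a matter of unwinding the constructions of \cite{hayashijanuszewski}.
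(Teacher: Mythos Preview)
Your proposal is correct and follows the same approach as the paper, which simply records the theorem as a consequence of \cite[Theorems 3.7.30 and 3.10.1]{hayashijanuszewski} without further argument. You have merely spelled out the compatibility checks on $i_v$, $\cA$, and $\cM$ that the paper leaves implicit.
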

	
	In particular, we obtain $k$-forms of the standard modules (the dual of cohomologically induced modules) arising from non-closed orbits by \cite[4.3. Theorem]{MR910203}. To be precise, if $k$ is a Noetherian subring of $\bC$ of finite Krull dimension and the base change of $\cM$ to $\bC$ is irreducible, $\Gamma(\cB_G,(i_v)_+\cM)\otimes_k\bC$ is a standard module.

	\renewcommand{\abstractname}{Acknowledgments}
	\begin{abstract}
		This work was supported by JSPS KAKENHI Grant Number JP22KJ2045.
	\end{abstract}

	\section*{Convention}
	In this paper, quotient by group schemes is taken in the fppf topology.
	
	\section*{Notation}
	
	Let $\bZ$ be the ring of integers. Let $\bQ$ (resp.~$\bR$, $\bC$) denote the field of rational (resp.~real, complex) numbers.
	
	For a group, we express its unit by $e$.
	
	Let $S$ be a scheme, and $s\in S$. We denote the residue field of $S$ at $s$ by $\kappa(s)$.
	We write $\bar{s}$ for the geometric point attached to $s$, i.e., the naturally induced map from the spectrum of the algebraic closure of $\kappa(s)$ to $S$. More generally, geometric points of $S$ and their corresponding algebraically closed fields will be denoted by $\bar{s}$ and $\kappa(\bar{s})$ respectively. For an $S$-scheme $X$, we denote its fiber at a point $s\in S$ (resp.~a geometric point $\bar{s}$ of $S$) by $X_s$ (resp.~$X_{\bar{s}}$). We apply a similar notation to morphisms as well. For an $S$-scheme $T$, we write $X(T)$ for the set of $T$-points of $X$. If $T=\Spec A$ for a commutative ring $A$, then we sometimes write $X(A)=X(T)$.
	
	For a torus $T$ over an algebraically closed field, we denote its cocharacter group by $X_\ast(T)$. We set $E(T)=\bR\otimes_\bZ X_\ast(T)$.
	
	For a finite set $E$ and a scheme $S$, let $E_S$ denote the attached constant $S$-scheme (i.e., the disjoint union of copies of $S$ indexed by $E$). Similarly, if we are given a map $f:E\to E'$ of finite sets, we write $f_S$ for the induced constant morphism $E_S\to E'_S$.
	
	Let $S$ be a scheme, $K\hookrightarrow G$ be a monomorphism of group $S$-schemes, $X,X'\rightrightarrows G$ be monomorphisms of $S$-schemes. Then we define presheaves $\Transp_K(X,X')$ and $\STransp_K(X,X')$ on the category of $S$-schemes by
	\[\begin{array}{cc}
		T\mapsto\{k\in K(T):~k Xk^{-1}\subset X'\},
		&T\mapsto\{k\in K(T):~k Xk^{-1}= X'\}
	\end{array}\]
	(\cite[D\'efinition 6.1]{MR0234961}). If $X=X'$, we write
	$N_K(X)\coloneqq\STransp_K(X,X)$.
	This is equal to $\Transp_K(X,X)$ if $X$ is of finite presentation (\cite[Proposition (17.9.6)]{MR0238860}).
	
	Let $S$ be a scheme. Let $X$ be an $S$-scheme, equipped with an action of a group scheme $K$ over $S$. Let $x\in X(S)$. Then we denote the stabilizer subgroup of $K$ at $x$ by $Z_K(x)$ (\cite[Scholie 2.3.3.1]{MR0212028}). Set $Kx=K/Z_K(x)$, which will be referred to as the $K$-orbit attached to $x$. We call the canonical monomorphism $Kx\hookrightarrow X$ the orbit map in this paper.
	
	Let $G$ be a reductive group scheme over a scheme $S$. If we are given a splitting of $G$ with a split maximal torus $H$ (\cite[D\'efinition 1.13]{MR0218363}), we denote the set of roots (resp.~the Weyl group of the corresponding root datum) by $\Delta(G,H)$ (resp.~$W(\Delta(G,H))$). For a maximal torus $H$ of $G$, the Weyl group scheme of $(G,H)$ will be written as $W(G,H)$ (\cite[Section 3.1]{MR0218363}). If a section of $W(G,H)$ admits a lift to the normalizer of $H$ in $G$, we denote it by the same symbol. We remark that the lift exists if $(G,H)$ admits a splitting (\cite[Corollaire 3.8]{MR0218363}). Moreover, $W(G,H)$ is isomorphic to the constant group $S$-scheme $W(\Delta(G,H))_S$ (\cite[Proposition 3.4]{MR0218363}). Under this identification, we apply the same notation to elements of the Weyl group by regarding them as constant sections. We write $\cB_G$ and $\Tor_G$ for the moduli $S$-schemes of Borel subgroups and maximal tori respectively. We write $\Kill_G$ for the Killing scheme, i.e., the moduli $S$-scheme of pairs of Borel subgroups and their maximal tori. See \cite[Corollaire 5.8.3]{MR0218363} for the representability of these three spaces. We denote the moduli scheme of pairs of Borel subgroups of standard position as $\Stand_{G,\emptyset,\emptyset}$. Throughout this paper, we put the diagonal $G$-action on $\Stand_{G,\emptyset,\emptyset}$. The quotient
	$\StandType_{G,\emptyset,\emptyset}\coloneqq G\backslash\Stand_{G,\emptyset,\emptyset}$
	is represented by a finite \'etale $S$-scheme. The quotient map $\Stand_{G,\emptyset,\emptyset}\to \StandType_{G,\emptyset,\emptyset}$ will be denoted by $t_2$.
	See \cite[Section 4.5.3]{MR0218364} for details.	
	
	For $n\geq 1$, we write $\fS_n$ for the $n$th symmetric group.
	
	For square matrices $A_1,A_2,\ldots,A_n$, we denote the block diagonal matrix
	\[\left(\begin{array}{cccc}
		A_1 &  &  &  \\
		& A_2 &  &  \\
		&  & \ddots &  \\
		&  &  & A_n
	\end{array}
	\right)\]
	by $\diag(A_1,A_2,\ldots,A_n)$. We apply similar notations to block diagonal group schemes of matrices. For a nonnegative integer, let $I_n$ denote the unit matrix of size $n$.
	
	\section{Preliminaries}

	\subsection{Fixed point subscheme}\label{sec:inv}
	
	Following \cite[Lemma 3.1.1]{MR4627704}, we define the invariant subscheme for a scheme with an involution. To be precise, it was done for affine bases. We remark that the representability and the closedness over general base schemes are verified through the literally the same argument. As for the smoothness, we may assume the base to be affine since the statement is local on the base. Let us record this argument as a statement:
	
	\begin{defn-prop}\label{defn-prop:x^theta}
		Let $S$ be a scheme, and $X$ be an $S$-scheme with an involution $\theta$.
		\begin{enumerate}
			\item The presheaf over the category of $S$-schemes defined by
			\[T\mapsto \{x\in X(T):~\theta(x)=x\}\]
			is represented by a subscheme of $X$, which we will denote by $X^\theta$.
			\item If $X$ is locally of finite presentation over $S$, so is $X^\theta$. In particular, the inclusion map $X^\theta\hookrightarrow X$ is then locally of finite presentation.
			\item If $X$ is separated over $S$, $X^\theta$ is a closed subscheme of $X$.
			\item Assume that $S$ is a $\bZ[1/2]$-scheme. If $X$ is smooth and separated over $S$, so is $X^\theta$.
		\end{enumerate}
	\end{defn-prop}

	For a torus $H$ over a $\bZ[1/2]$-scheme $S$, equipped with an involution $\theta$, we define a subtorus $H^-$ as follows: Since $H$ is commutative, $h\mapsto \theta(h)^{-1}$ is an involution of $H$. Its invariant closed subgroup scheme is smooth of multiplicative type by Definition-Proposition \ref{defn-prop:x^theta} and \cite[Corollary B.3.3]{MR3362641}. We set $H^-$ as its unit component. This is a torus by definition.

	\subsection{Orbits}
	
	In this section, we collect basic generalities on representability and geometric properties of orbits. Let $S$ be a scheme, and $K$ be a smooth quasi-compact and quasi-separated group scheme over $S$.

	\begin{lem}[{\cite[Lemma 4.1]{hayashicontraction}}]\label{lem:quotient}
		Let $L$ be a closed subgroup scheme of $K$ which is flat and locally of finite presentation (e.g.~smooth) over $S$. Then $K/L$ is a smooth quasi-compact separated algebraic space over $S$.
	\end{lem}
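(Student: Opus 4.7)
The plan is to obtain the four properties (algebraic space, smooth, quasi-compact, separated) by combining a general representability theorem for fppf quotients with standard descent arguments along the projection $\pi\colon K\to K/L$.

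First, for representability of $K/L$ as an algebraic space, I would invoke the classical theorem of Artin (see e.g.~\cite[Expos\'e V]{MR0274458} or the Stacks project): if $K$ is a flat, locally finitely presented group $S$-space and $L\subset K$ is a flat, locally finitely presented closed subgroup space, then the fppf quotient sheaf $K/L$ is an algebraic space and the projection $\pi\colon K\to K/L$ is an $L$-torsor in the fppf topology. In our setting $K$ is even smooth, so all the hypotheses are satisfied. This step is where essentially all the work is packaged; representability is the main obstacle and is the content one has to import.

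Once the quotient exists, the remaining three properties follow from $\pi$ being fppf surjective together with an $L$-torsor identification $K\times_{K/L}K\cong K\times_S L$ sending $(k_1,k_2)$ to $(k_1,k_1^{-1}k_2)$. Quasi-compactness is immediate: $\pi$ is surjective and $K$ is quasi-compact, so $K/L$ is quasi-compact. For separatedness, the diagonal of $K/L$ over $S$ becomes, after pulling back along the fppf surjection $K\times_S K\to K/L\times_S K/L$, the graph-type map $K\times_S L\to K\times_S K$, $(k,l)\mapsto(k,kl)$; this is a closed immersion because $L\hookrightarrow K$ is one, and a closed immersion descends through fppf surjections. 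Hence the diagonal of $K/L\to S$ is a closed immersion.

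Finally, for smoothness of $K/L\to S$: the map $\pi$ is fppf surjective and the composition $K\to K/L\to S$ is smooth by hypothesis on $K$. Smoothness is fppf-local on the source in the sense that if $p\circ\pi$ is smooth and $\pi$ is fppf surjective, then $p$ is smooth (apply fppf descent of flatness and of local finite presentation, then use the formal smoothness lifting criterion together with the fact that fppf torsors admit local sections). Applied to $p\colon K/L\to S$ and $\pi\colon K\to K/L$, this yields the desired smoothness. The argument is precisely the one performed in \cite[Lemma 4.1]{hayashicontraction}, which we simply cite.
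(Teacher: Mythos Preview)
The paper does not give its own proof of this lemma: it is stated with a citation to \cite[Lemma~4.1]{hayashicontraction} and no argument is supplied in the body. Your proposal is therefore not competing with any in-text proof; you are sketching the content of the cited reference, and you acknowledge this in your final sentence.

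As for correctness, your outline is sound. Representability as an algebraic space with $\pi\colon K\to K/L$ an fppf $L$-torsor is the standard input (Artin/SGA3), and your descent arguments for separatedness and quasi-compactness are correct. The one step that deserves a slightly firmer justification is smoothness: the slogan ``smoothness is fppf-local on the source'' is true here but not tautological. A clean way to finish is to note that flatness and local finite presentation of $K/L\to S$ descend from $K\to S$ along the fppf surjection $\pi$, and then check smoothness on geometric fibers: for each geometric point $\bar{s}$ of $S$, $K_{\bar{s}}\to (K/L)_{\bar{s}}$ is faithfully flat with $K_{\bar{s}}$ regular, hence $(K/L)_{\bar{s}}$ is regular, hence smooth over the (perfect) field $\kappa(\bar{s})$. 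Your parenthetical via the infinitesimal lifting criterion and torsor sections also works but is more delicate to make precise.
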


	\begin{prop}\label{prop:closed}
		Let $X$ be an $S$-scheme, equipped with an action of $K$. Pick a section $x\in X(S)$.
		\begin{enumerate}
			\item If $X$ is separated over $S$, $Z_K(x)$ is a closed subgroup scheme of $K$.
			\item If $X$ is locally of finite presentation over $S$, and $Z_K(x)$ is flat over $S$ then $Kx$ is represented by an $S$-scheme.
		\end{enumerate}
	\end{prop}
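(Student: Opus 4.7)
The unifying idea is to realise $Z_K(x)$ as an equalizer, then combine this with Lemma~\ref{lem:quotient} and a representability argument in the category of algebraic spaces.

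For (1), $Z_K(x)$ is by definition the equalizer of the action map $a\colon K\to X,\ k\mapsto k\cdot x$ and the constant map $c_x\colon K\to X$ with value $x$. Equivalently,
\[
Z_K(x)\ \cong\ K\times_{X\times_S X} X,
\]
where $K\to X\times_S X$ is $(a,c_x)$ and $X\to X\times_S X$ is the diagonal $\Delta_{X/S}$. Under the separatedness hypothesis, $\Delta_{X/S}$ is a closed immersion, a property stable under base change, so $Z_K(x)\hookrightarrow K$ is a closed immersion, giving $Z_K(x)$ the structure of a closed subgroup scheme of $K$.

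For (2), the same equalizer description shows that $Z_K(x)\hookrightarrow K$ is locally of finite presentation as soon as $\Delta_{X/S}$ is, i.e., whenever $X$ is locally of finite presentation over $S$. Combined with the flatness of $Z_K(x)$ over $S$, and (via (1), applicable once a mild separatedness of $X$ is in place, as holds in all cases of interest in this paper) its closedness in $K$, Lemma~\ref{lem:quotient} produces $Kx=K/Z_K(x)$ as a smooth quasi-compact separated algebraic space over $S$. To upgrade this algebraic space to a scheme, I observe that the orbit map $Kx\hookrightarrow X$ is a monomorphism of algebraic spaces and is locally of finite type, since both source and target are. A monomorphism locally of finite type between algebraic spaces is automatically separated and locally quasi-finite, hence representable by schemes by the standard Zariski-main-theorem type criterion in the theory of algebraic spaces. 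Since $X$ is a scheme, $Kx=Kx\times_X X$ is therefore a scheme.

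The main obstacle is the final passage from algebraic space to scheme: the fppf quotient of a smooth group scheme by a flat, locally finitely presented subgroup scheme is in general only an algebraic space, and the crucial extra input is the monomorphism $Kx\hookrightarrow X$ into the scheme $X$ which, under the local finite presentation hypothesis, upgrades $Kx$ to a scheme via the representability criterion for locally quasi-finite separated morphisms.
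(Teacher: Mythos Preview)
Your argument for (1) is the standard equalizer/diagonal computation and agrees with the content of the SGA~3 reference the paper cites.

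For (2) the paper simply invokes \cite[Th\'eor\`eme 10.1.2]{MR0257095}, while you supply an actual argument via algebraic spaces and the representability criterion for separated, locally quasi-finite morphisms. That route is legitimate, but there is a genuine gap as written: you feed $Z_K(x)\subset K$ into Lemma~\ref{lem:quotient}, which requires the subgroup to be \emph{closed}, and you extract closedness from (1), which needs $X$ separated---a hypothesis not present in (2). The parenthetical ``as holds in all cases of interest in this paper'' does not establish the proposition as stated.

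The repair is short. Even without separatedness, $\Delta_{X/S}$ is an immersion, so $Z_K(x)\hookrightarrow K$ is an immersion; since $X\to S$ is locally of finite presentation, so is $\Delta_{X/S}$, hence $Z_K(x)\to K$ and therefore $Z_K(x)\to S$ are locally of finite presentation. Together with the assumed flatness, $Z_K(x)\to S$ is fppf, so $K\to K/Z_K(x)$ is an fppf torsor and Artin's theorem on quotients by fppf equivalence relations produces $K/Z_K(x)$ as an algebraic space without appealing to Lemma~\ref{lem:quotient}. Your final step then applies verbatim: the orbit map $K/Z_K(x)\hookrightarrow X$ is a monomorphism locally of finite presentation (by cancellation, since source and target are lfp over $S$), hence separated and locally quasi-finite, hence schematic; as $X$ is a scheme, so is $K/Z_K(x)$. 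Alternatively, the SGA~3 theorem cited by the paper proves directly that the orbit map is an immersion, which yields representability without passing through algebraic spaces.
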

	
	\begin{proof}
		See \cite[Exemples 6.2.4.~b)]{MR0234961} (resp.~\cite[Th\'eor\`eme 10.1.2]{MR0257095})
		for (1) (resp.~(2)).
	\end{proof}
	
	As an application, let us record:
	
	\begin{cor}\label{cor:opensubgroup}
		Consider the setting of Theorem \ref{thm:tor}. Let $H$ be a $\theta$-stable maximal torus of $G$. Then $W_K(G,H)$ is represented by an affine open subgroup scheme of $W(G,H)$.
	\end{cor}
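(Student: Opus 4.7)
The plan is to represent $W_K(G,H)$ as a smooth quotient algebraic space via Lemma \ref{lem:quotient}, then to verify that its natural comparison morphism to $W(G,H)$ is an \'etale monomorphism (hence an open immersion) by reducing to geometric fibers, and finally to deduce affineness from a Matsushima-type quotient result.

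First, since $H$ is $\theta$-stable, the normalizer $N_G(H)$ is $\theta$-stable and smooth separated over $S$: the quotient $N_G(H)/H=W(G,H)$ is finite \'etale and $H$ is smooth, so $N_G(H)$ is smooth, and it is closed in $G$. Applying Definition-Proposition \ref{defn-prop:x^theta} to the restriction of $\theta$ to $N_G(H)$ and to $H$, both $N_K(H)=N_G(H)^\theta$ and $H\cap K=H^\theta$ are smooth closed subgroup schemes, the latter of multiplicative type. Lemma \ref{lem:quotient} then exhibits $W_K(G,H)=N_K(H)/(H\cap K)$ as a smooth, quasi-compact, separated algebraic space over $S$. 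Using that $H$ is its own centralizer in $G$ (a standard fact for maximal tori in reductive group schemes), we have $Z_K(H)=H\cap K$, so the composition $N_K(H)\hookrightarrow N_G(H)\twoheadrightarrow W(G,H)$ has scheme-theoretic kernel exactly $H\cap K$, and induces a monomorphism $\iota\colon W_K(G,H)\hookrightarrow W(G,H)$ of fppf group sheaves.

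The crux is to show $\iota$ is an open immersion. Since $W(G,H)$ is finite \'etale and $W_K(G,H)$ is already smooth, it suffices to check that $W_K(G,H)\to S$ is \'etale, equivalently that its geometric fibers are zero-dimensional. At a geometric point $\bar s$, the fiber $N_{K_{\bar s}}(H_{\bar s})/(H_{\bar s}\cap K_{\bar s})$ acts faithfully on $H_{\bar s}$ by conjugation (having quotiented out precisely by the centralizer) and thereby embeds into the finite constant Weyl group $W(G_{\bar s},H_{\bar s})(\kappa(\bar s))$; hence it is finite. This makes $\iota$ an \'etale monomorphism of \'etale algebraic spaces over $S$, which is automatically an open immersion, and in particular realizes $W_K(G,H)$ as an open subscheme of $W(G,H)$.

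Finally, affineness of $W_K(G,H)$ over $S$ follows from $N_K(H)$ being affine (closed in $G$) together with $H\cap K$ being of multiplicative type, via the scheme-theoretic analogue of Matsushima's criterion that the quotient of an affine group scheme by a closed subgroup scheme of multiplicative type is again affine. I expect this last point to be the main obstacle, in the formal sense of locating or adapting an appropriate form of that quotient criterion over a general $\bZ[1/2]$-base; the remaining steps are routine applications of the already-quoted generalities (Definition-Proposition \ref{defn-prop:x^theta} and Lemma \ref{lem:quotient}).
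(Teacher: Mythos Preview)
Your proof is correct and follows essentially the same route as the paper: smoothness of $N_K(H)$ and $H\cap K$ via Definition-Proposition~\ref{defn-prop:x^theta}, smoothness of the quotient via Lemma~\ref{lem:quotient}, an open-immersion argument (the paper observes directly that $\iota$ is smooth because its target is \'etale over $S$, rather than going through geometric fibers as you do), and finally affineness from $H\cap K$ being of multiplicative type. The obstacle you flag is exactly where the paper's effort goes: it fppf-localizes until $H\cap K$ is diagonalizable and then invokes \cite[Th\'eor\`eme~5.1]{MR0212024}.
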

	
	\begin{proof}
		It is evident that $N_G(H)$ is $\theta$-stable in $G$. Let us also recall that $N_G(H)$ is a smooth closed subgroup scheme of $G$ (\cite[Proposition 2.1.2]{MR3362641}). Definition-Proposition \ref{defn-prop:x^theta} thus implies that $N_K(H)=N_G(H)^\theta$ and $H\cap K=H^\theta$ are smooth closed subgroup schemes of $G$. Observe that the kernel of the composite map
		\begin{equation}
			N_K(H)\hookrightarrow N_{G}(H)\to W(G,H)\label{eq:comp}
		\end{equation}
		of the canonical inclusion and quotient maps is $H\cap K$. Apply \cite[Corollaire 10.1.3]{MR0257095} to \eqref{eq:comp} to deduce that $W_K(G,H)$ is representable. Lemma \ref{lem:quotient} also implies that $W_K(G,H)$ is smooth over $S$. The monomorphism $W_K(G,H)\hookrightarrow W(G,H)$ is an open immersion since it is smooth (\cite[Proposition 2.4.1 (iv)]{MR2791606}, \cite[Th\'eor\`eme (17.9.1)]{MR0238860}).
		
		Finally, we prove that $W_K(G,H)$ is affine. Observe that $H\cap K$ is of multiplicative type by \cite[Corollary B.3]{MR3362641}. Hence to see that $W_K(G,H)$ is affine, we may work locally in the fppf topology of $S$ to assume that $H\cap K$ is diagonalizable. In this case, the assertion follows from \cite[Th\'eor\`eme 5.1]{MR0212024}. In fact, note that the fppf quotient $W_K(G,H)=N_K(H)/(K\cap H)$ is also the fpqc quotient since $W_K(G,H)$ is represented by an $S$-scheme and in particular it is a sheaf in the fpqc topology.
	\end{proof}
	
	\begin{ex}\label{ex:fun}
		Suppose that $H$ is a fundamental Cartan subgroup, i.e., the centralizer of a maximal torus $T$ of $K$ (\cite[Theorem 3.1.3, Example 3.1.2]{MR4627704}\footnote{For the representability over general bases, see \cite[Corollaire 5.3]{MR0215856} or \cite[Lemma 2.2.4]{MR3362641}. Then work locally in the Zariski topology to deduce the version of \cite[Theorem 3.1.3]{MR4627704} for general base schemes with $1/2$.}). Then we have a canonical isomorphism $N_K(T)/(H\cap K)\cong W_K(G,H)$. In particular, if $K$ has connected geometric fibers, $W_K(G,H)\cong W(K,T)$ and $W_K(G,H)$ is finite \'etale over $S$.
	\end{ex}
	
	\subsection{Hensel's lemma}
	
	A basic idea for the proofs of the representability theorems and Variant \ref{var:mainthm} (2) in Section \ref{sec:intro} is to lift the corresponding results over algebraically closed fields of characteristic not two. In SGA 3, a general technique for this kind of idea was developed as Hensel's lemma of \cite[Proposition 1.10]{MR0215856}. Let us record its version suited to our applications in this paper:
	
	\begin{prop}\label{prop:hensel}
		Every smooth morphism \'etale locally admits a section. That is, let $f:X\to S$ be a smooth morphism of schemes. Then there exist an \'etale covering $\{S_\lambda\to S\}$ such that $(X\times_S S_\lambda)(S_\lambda)\neq \emptyset$ for every $\lambda$, where $X\times_S S_\lambda$ is regarded as an $S_\lambda$-scheme in a natural way.
	\end{prop}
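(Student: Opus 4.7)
The plan is to combine the local structure theorem for smooth morphisms with an explicit lift of residue-field data to a section of affine space. First I would reduce to $S=\Spec A$ affine, since the conclusion is Zariski-local on $S$. I may also restrict attention to the points in $f(X)$: because smooth morphisms are open, $f(X)\subset S$ is open, and after replacing $S$ by $f(X)$ we may assume $f$ surjective (there is no hope of a section over a base not contained in $f(X)$).

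Next, for each $s\in S$ I would construct an \'etale neighborhood of $s$ equipped with an $S$-morphism to $X$. Picking $x\in X$ with $f(x)=s$, the local structure theorem for smooth morphisms (e.g., \cite[Corollaire 17.11.4]{MR0238860}) yields a Zariski open $U\subset X$ containing $x$, an integer $n\geq 0$, and an \'etale $S$-morphism $g:U\to\bA^n_S$. The value $g(x)$ is an $n$-tuple $(a_1,\dots,a_n)\in\kappa(s)^n$; writing each $a_i=\bar{b}_i/\bar{c}_i$ with $b_i,c_i\in A$ and $c_i$ nonzero at $s$, we obtain, over the Zariski open $V\coloneqq D(c_1\cdots c_n)\subset S$ (which contains $s$), a section $\tilde{a}:V\to\bA^n_V$ whose value at $s$ equals $g(x)$.

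Finally, I would set $S'_s\coloneqq U\times_{\bA^n_S,\tilde{a}}V$. The projection $S'_s\to V$ is \'etale as a base change of $g$, and the point $(x,s)\in U\times_{\bA^n_S}V$ lies over $s$, showing that $s$ lies in the image of the composite $S'_s\to V\hookrightarrow S$. The first projection $S'_s\to U\hookrightarrow X$ is an $S$-morphism, hence a section of $X\times_S S'_s\to S'_s$. Collecting the morphisms $S'_s\to S$ as $s$ ranges over $S$ produces the required \'etale cover. The only potentially subtle step is the construction of the lift $\tilde{a}$, which is handled by the elementary residue-field-to-localization argument above. Alternatively, the statement is precisely \cite[Corollaire 17.16.3 (ii)]{MR0238860} (recorded in our setting already as \cite[Proposition 1.10]{MR0215856}), so a direct citation would also suffice.
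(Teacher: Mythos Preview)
Your overall strategy is fine, and as you note at the end a direct citation of \cite[Proposition 1.10]{MR0215856} suffices; that is exactly what the paper does (pick $s\in S$, choose a point $x_s\in X_s$ with $\kappa(x_s)/\kappa(s)$ finite separable, and invoke that proposition to produce an \'etale $S'\to S$ through $s$ together with an $S$-morphism $S'\to X$). Your argument is a hands-on unpacking of this citation via the local structure theorem.

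There is, however, a genuine gap in the explicit version as written. You assert that ``the value $g(x)$ is an $n$-tuple $(a_1,\dots,a_n)\in\kappa(s)^n$'', but the coordinates $g_i(x)$ lie a priori only in $\kappa(x)$, and for an arbitrary choice of $x$ and $g$ there is no reason for them to land in $\kappa(s)$. (Take for instance $S=\Spec\bR$ and $X$ a smooth conic with $X(\bR)=\emptyset$: every closed point has residue field $\bC$, and a generic choice of \'etale coordinate will take value in $\bC\setminus\bR$.) Your lifting step $a_i=\bar b_i/\bar c_i$ with $b_i,c_i\in A$ then breaks down. The fix is to refine the choices: take $x$ to be a closed point of $X_s$ with $\kappa(x)/\kappa(s)$ finite separable, and choose the \'etale coordinates $g_i\in\mathfrak m_x$ (this is possible because separability of $\kappa(x)/\kappa(s)$ forces $\mathfrak m_{X_s,x}/\mathfrak m_{X_s,x}^2\cong\Omega^1_{X_s/\kappa(s),x}\otimes\kappa(x)$, so generators of the cotangent space lift to $\mathfrak m_x$). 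Then $g(x)$ is the origin of $\bA^n_{\kappa(s)}$, you may take $\tilde a=0$ on $V=\Spec A$, and $S'_s$ is simply the fiber of $g$ over the zero section. With that adjustment your argument goes through; note that the choice of $x$ with separable residue extension is precisely the input the paper singles out before invoking Hensel's lemma.
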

	
	\begin{proof}
		Pick $s\in S$. Since $X_s$ is nonempty and smooth over $\kappa(s)$, one can find $x_s\in X_s$ with $\kappa(x)$ finite separable over $\kappa(s)$. Apply \cite[Proposition 1.10]{MR0215856} to $x_s$ to obtain an \'etale morphism $S'\to S$ hitting $s$ and an $S$-morphism $x:S'\to X$. It gives rise to an element of $(X\times_S S')(S')$. Run through all $s$ to deduce the assertion.
	\end{proof}

	Towards the proof of Theorem \ref{mainthm} (1), we also need to analyze the moduli space $\cB^{\theta\mathrm{-}\std}_G$. The key observation will be to verify that $\cB^{\theta\mathrm{-}\std}_G$ is \'etale locally ``closure relation free''. That is, every section of $\cB^{\theta\mathrm{-}\std}_G$ \'etale locally arises from a $K$-orbit over an \'etale $S$-scheme. To do this, we wish to lift an orbit at a geometric point $\bar{s}$ to a $K$-orbit around an \'etale neighborhood of $\bar{s}$. On the course, we will also have to manage a similar issue for tori $H^-$ due to the combinatorial classification of $\theta$-stable maximal tori in \cite[Section 9]{MR1066573} (see Section \ref{sec:richardson--springer} for a brief exposition). The assertion below obtained by a simple application of \cite[Proposition 1.10]{MR0215856} will resolve these issues:
	
	\begin{prop}\label{prop:transp}
		Let $S$ be a scheme, $K\hookrightarrow G$ be a monomorphism of group $S$-schemes, and $Y,Y'\rightrightarrows G$ be monomorphisms of $S$-schemes. Assume that the presheaf $\Transp_K(Y,Y')$ is represented by a smooth $S$-scheme. Then the following conditions are equivalent:
		\begin{enumerate}
			\renewcommand{\labelenumi}{(\alph{enumi})}
			\item There are an \'etale covering $\{S_\lambda\to S\}$ and $k_\lambda\in K(S_\lambda)$ such that
			\[k_\lambda (Y\times_S S_\lambda)k^{-1}_\lambda\subset Y'\times_S S_\lambda;\]
			\item There are an fpqc covering $\{S_\lambda\to S\}$ and $k_\lambda\in K(S_\lambda)$ such that
			\[k_\lambda (Y\times_S S_\lambda)k^{-1}_\lambda\subset Y'\times_S S_\lambda;\] 
			\item For every geometric point $\bar{s}$, there exists $k\in K(\kappa(\bar{s}))$ such that $k Y_{\bar{s}} k^{-1}\subset Y'_{\bar{s}}$;
			\item The structure morphism $\Transp_K(Y,Y')\to S$ is surjective.
		\end{enumerate}
		A similar assertion holds for $\STransp$.
	\end{prop}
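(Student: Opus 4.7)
The plan is to establish the cycle $(a) \Rightarrow (b) \Rightarrow (c) \Rightarrow (d) \Rightarrow (a)$; the substantive step will be the final implication, which rests on Hensel's lemma (Proposition \ref{prop:hensel}) together with the smoothness hypothesis on $\Transp_K(Y,Y')$. The implication $(a) \Rightarrow (b)$ is immediate, as every \'etale covering is fpqc. For $(b) \Rightarrow (c)$, given a geometric point $\bar s$ of $S$, the pullback of the fpqc covering along $\bar s$ remains jointly surjective onto $\Spec\kappa(\bar s)$, so some $S_\lambda \times_S \Spec\kappa(\bar s)$ is nonempty; since $\kappa(\bar s)$ is algebraically closed, a closed point of this scheme yields a lift $\Spec\kappa(\bar s) \to S_\lambda$ of $\bar s$, along which we pull back $k_\lambda$ to obtain the desired $k \in K(\kappa(\bar s))$.

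For $(c) \Rightarrow (d)$, the assumed $k$ is by definition a $\kappa(\bar s)$-point of the fiber $\Transp_K(Y,Y')_{\bar s}$, so this fiber is nonempty for every geometric point $\bar s$, whence the structure morphism hits every point of $S$ and is surjective. For the main implication $(d) \Rightarrow (a)$, the morphism $\Transp_K(Y,Y') \to S$ is smooth by hypothesis and surjective by $(d)$, so every geometric fiber is nonempty and smooth. Proposition \ref{prop:hensel} then yields an \'etale covering $\{S_\lambda \to S\}$ together with $S_\lambda$-sections of $\Transp_K(Y,Y') \times_S S_\lambda$; by the functor-of-points definition of the transporter, each such section is precisely an element $k_\lambda \in K(S_\lambda)$ satisfying $k_\lambda (Y \times_S S_\lambda) k_\lambda^{-1} \subset Y' \times_S S_\lambda$, which is $(a)$.

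The $\STransp_K(Y,Y')$ variant is proved verbatim, replacing each inclusion by the corresponding equality throughout; the smoothness hypothesis is then imposed on $\STransp_K(Y,Y')$, and one applies Proposition \ref{prop:hensel} to it directly. The only nontrivial input is Hensel's lemma via the smoothness hypothesis; the remaining implications amount to bookkeeping with fpqc surjectivity and the functor-of-points definition of the transporter. I do not anticipate a serious obstacle beyond carefully keeping track of this translation between sections of $\Transp_K(Y,Y')$ and elements of $K$ satisfying the conjugation constraint.
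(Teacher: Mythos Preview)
Your overall strategy---the cycle $(a)\Rightarrow(b)\Rightarrow(c)\Rightarrow(d)\Rightarrow(a)$ with Proposition~\ref{prop:hensel} supplying the substantive step $(d)\Rightarrow(a)$---is exactly the standard argument, and it is what the paper has in mind: its proof consists of the single sentence ``This is proved in a similar way to \cite[Corollaire 5.4]{MR0215856}.'' So you are reconstructing the intended proof in full detail.

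There is, however, one step that does not work as you have written it. In $(b)\Rightarrow(c)$ you assert that, because $\kappa(\bar s)$ is algebraically closed, a closed point of $S_\lambda\times_S\Spec\kappa(\bar s)$ gives a morphism $\Spec\kappa(\bar s)\to S_\lambda$. This fails for general fpqc covers: the $S_\lambda$ need not be locally of finite type over $S$, so residue fields of points of $S_\lambda\times_S\Spec\kappa(\bar s)$ can be transcendental extensions of $\kappa(\bar s)$ (e.g.\ $S=\Spec F$ with $F$ algebraically closed and $S_\lambda=\Spec F(t)$). The fix is immediate and uses the smoothness hypothesis you already have: either argue $(b)\Rightarrow(d)$ directly (the $k_\lambda$ are sections of $\Transp_K(Y,Y')\to S$ over an fpqc cover, so the structure morphism is surjective), or keep your route but observe that any point of $S_\lambda\times_S\Spec\kappa(\bar s)$ gives an $L$-point of $\Transp_K(Y,Y')_{\bar s}$ for some extension $L\supset\kappa(\bar s)$, and since $\Transp_K(Y,Y')_{\bar s}$ is smooth (hence locally of finite type) over the algebraically closed field $\kappa(\bar s)$, nonemptiness forces a $\kappa(\bar s)$-point. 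With this correction the argument goes through.
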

	
	\begin{proof}
		This is proved in a similar way to \cite[Corollaire 5.4]{MR0215856}. 
	\end{proof}
	
	We would like to record two situations where $\Transp_K$ and $\STransp_K$ are smooth. Let $S$ be a $\bZ[1/2]$-scheme. Let $G$ be a smooth $S$-affine group scheme over $S$, equipped with an involution $\theta$. Set $K=G^\theta$.
	
	\begin{prop}\label{prop:multiplicative}
		If $T,T'\subset G$ are $\theta$-stable closed subgroup schemes of multiplicative type, $\Transp_K(T,T')$ is smooth.
	\end{prop}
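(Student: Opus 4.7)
The plan is to realize $\Transp_K(T,T')$ as the $\theta$-fixed point subscheme of the $G$-transporter $\Transp_G(T,T')$, and then apply Definition-Proposition \ref{defn-prop:x^theta}~(4). Since $T$ and $T'$ are $\theta$-stable, the involution $\theta$ of $G$ restricts to an involution of $\Transp_G(T,T')$: functorially, for any $S$-scheme $R$ and any $g\in G(R)$ with $gT_Rg^{-1}\subset T'_R$, the $\theta$-stability yields
\[\theta(g)T_R\theta(g)^{-1}=\theta(gT_Rg^{-1})\subset\theta(T'_R)=T'_R,\]
so $\theta(g)\in\Transp_G(T,T')(R)$. Combined with $K=G^\theta$, this gives an identification $\Transp_K(T,T')=\Transp_G(T,T')^\theta$ of subpresheaves of $G$.

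The next step is to establish that $\Transp_G(T,T')$ is represented by a smooth separated $S$-scheme. Since $T$ is of finite presentation (being of multiplicative type), the transporter is a closed subscheme of the affine scheme $G$, hence separated over $S$; the substantive issue is smoothness. For this I would invoke the classical result of SGA 3 that the $G$-transporter between two closed subgroups of multiplicative type in a smooth affine group scheme is smooth over the base, in the same spirit as the smoothness of normalizers of multiplicative type subgroups recorded in \cite[Proposition 2.1.2]{MR3362641}. Alternatively, this is verified directly via the infinitesimal lifting criterion: a given lift of a point to $G$ exists by smoothness of $G$, and is adjusted into the transporter using the rigidity of closed immersions of multiplicative type subgroup schemes under square-zero deformations.

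Finally, apply Definition-Proposition \ref{defn-prop:x^theta}~(4) to the smooth separated $S$-scheme $\Transp_G(T,T')$ equipped with the restricted involution $\theta$, using the standing hypothesis that $S$ is a $\bZ[1/2]$-scheme. The conclusion is that $\Transp_G(T,T')^\theta=\Transp_K(T,T')$ is smooth over $S$, as desired. The main obstacle is the intermediate smoothness claim for $\Transp_G(T,T')$; once that classical fact is pinned down to a reference, the passage from $G$ to $K=G^\theta$ is a purely formal consequence of the fixed-point formalism already developed earlier in the paper, and the same argument applies verbatim to $\STransp_K(T,T')$.
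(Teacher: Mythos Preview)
Your proposal is correct and follows essentially the same approach as the paper: identify $\Transp_K(T,T')$ with $\Transp_G(T,T')^\theta$, invoke \cite[Proposition 2.1.2]{MR3362641} for smoothness (and closedness in $G$) of $\Transp_G(T,T')$, and conclude via Definition-Proposition \ref{defn-prop:x^theta}. The paper's proof is simply a two-sentence version of what you wrote.
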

	
	\begin{proof}
		According to \cite[Proposition 2.1.2]{MR3362641}, $\Transp_G(T,T')$ is a $\theta$-stable smooth closed subscheme of $G$. The assertion now follows from Definition-Proposition \ref{defn-prop:x^theta}.
	\end{proof}

	\begin{prop}\label{prop:type(R)}
		Assume that $G$ is reductive.
		For $\theta$-stable subgroup schemes $T,T'\subset G$ of type (R) in the sense of \cite[D\'efinition 5.2.1]{MR0218363}, $\STransp_K(T,T')$ is smooth over $S$.		
	\end{prop}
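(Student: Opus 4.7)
The plan is to mimic the argument for Proposition \ref{prop:multiplicative}: first establish a smoothness statement for $\STransp_G(T,T')$ without the symmetry constraint via SGA 3, then cut out its $\theta$-fixed subscheme by means of Definition-Proposition \ref{defn-prop:x^theta}(4), and finally identify this fixed subscheme with $\STransp_K(T,T')$. Concretely, I would invoke the SGA 3 result (the type (R) analogue of the smoothness of $\Transp_G$ used in Proposition \ref{prop:multiplicative}; see the material surrounding \cite[D\'efinition 5.2.1]{MR0218363}) asserting that for two type (R) subgroup schemes $T, T' \subset G$ of a reductive group scheme, the presheaf $\STransp_G(T,T')$ is represented by a smooth subscheme of $G$, hence smooth and separated over $S$.

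Next, $\STransp_G(T,T')$ is $\theta$-stable: for any functorial point $g$ of $G$ satisfying $g T g^{-1} = T'$, one has $\theta(g) T \theta(g)^{-1} = \theta(T') = T'$ using the $\theta$-stability of $T$ and $T'$, so $\theta$ restricts to an involution on $\STransp_G(T,T')$. Reading off the defining functors of points, $\STransp_K(T,T')$ coincides with the $\theta$-fixed subscheme $\STransp_G(T,T')^\theta$, because functorial points of $K = G^\theta$ are precisely the $\theta$-fixed functorial points of $G$ while the condition $g T g^{-1} = T'$ itself does not involve $\theta$. Applying Definition-Proposition \ref{defn-prop:x^theta}(4) to the smooth separated $\theta$-equivariant $S$-scheme $\STransp_G(T,T')$, which is legitimate since $S$ lies over $\bZ[1/2]$, then concludes the smoothness of $\STransp_K(T,T')$ over $S$.

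The main obstacle I anticipate is pinpointing a clean SGA 3 statement guaranteeing smoothness of $\STransp_G(T,T')$ for arbitrary type (R) pairs rather than only the diagonal case $T=T'$ (which would just recover the smoothness of $N_G(T)$). Once this is secured, everything else reduces to a direct translation of definitions together with Definition-Proposition \ref{defn-prop:x^theta}.
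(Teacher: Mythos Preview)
Your proposal is correct and follows essentially the same route as the paper: the paper's proof simply cites \cite[Th\'eor\`eme 5.3.9]{MR0218363}, which is precisely the SGA~3 statement you were looking for (it gives smoothness of $\STransp_G(T,T')$ for type (R) subgroup schemes, not just the diagonal case), and then applies Definition-Proposition~\ref{defn-prop:x^theta}. Your anticipated obstacle is thus resolved by that reference, and the rest of your argument matches the paper's one-line proof.
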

	
	\begin{proof}
		This is an immediate consequence of \cite[Th\`eor\'eme 5.3.9]{MR0218363} and Definition-Proposition \ref{defn-prop:x^theta}.
	\end{proof}
	
	Note that we will also use the original form \cite[Proposition 1.10]{MR0215856} of Hensel's lemma later. See the paragraph below Proposition \ref{prop:smooth}.
	
	\section{Geometric $K$-conjugacy classes of $\theta$-stable maximal tori}
	
	In this section, we aim to prove Theorem \ref{thm:tor}. We recall that a combinatorial description of the set of $K$-conjugacy classes of $\theta$-stable maximal tori over an algebraically closed field of characteristic not two in terms of root systems was given in \cite[Section 9]{MR1066573}. Our strategy is as follows: We regard it as the set-theoretic result at geometric points. We lift it onto their \'etale neighborhoods to achieve a scheme-theoretic orbit decomposition \'etale locally. Finally, we take the quotient to finish the proof.
	
	In this section, let $S,G,\theta,K$ be as in Theorem \ref{thm:tor}.
	
	\subsection{Richardson--Springer's classification}\label{sec:richardson--springer}
	
	In this section, we review \cite[Section 9]{MR1066573}. Throughout this section, we put $S=\Spec F$, where $F$ is an algebraically closed field of characteristic not two. We pick an $F$-point $B_0$ of the unique open $K$-orbit in $\cB_G$. We choose a $\theta$-stable maximal torus $H_0$ of $B_0$. In a sequel, we identify $E(H^-_0)$ with a subspace of $E(H_0)$ for the canonical injective map $E(H^-_0)\hookrightarrow E(H_0)$ (back to Section \ref{sec:inv} for the definition of $H^-_0$). Choose any inner product $(-,-)$ on $E(H_0)$ which is invariant under the Weyl group of $(G,H_0)$ and $\theta$ (\cite[\S4]{MR0656625}). Let $\Delta(G,H_0)$ be the set of roots of $(G,H_0)$, which we regard as a subset of $E(H_0)$ for this inner product. We define an inner product on $E(H^-_0)$ by restriction of $(-,-)$.
	
	Let $\Sigma(G,H^-_0)$ be the set of restricted roots of $(G,H^-_0)$, i.e., the set of nontrivial characters of $H^-_0$ obtained by restriction of roots of $(G,H_0)$. We regard $\Sigma(G,H^-_0)$ as a subset of $E(H^-_0)$ by the inner product on $E(H^-_0)$. This is a root system of $E(H^-_0)$. Let $W_0$ be the Weyl group of $\Sigma(G,H^-_0)$. Set $\Psi_0=\Delta(G,H_0)\cap E(H^-_0)$. This is a root subsystem of $\Sigma(G,H^-_0)$. Let $W(\Psi_0)$ be the Weyl group of $\Psi_0$ which is a subgroup of $W_0$. Set $\cJ(\Psi_0)=\{w\in W(\Psi_0):~w^2=e\}$.
	
	Let $H$ be a $\theta$-stable maximal torus. We translate $H$ by $K$-conjugation to assume $H^-\subset H^-_0$. Regard $E(H^-)$ as a subspace of $E(H^-_0)$. Then one can find $c\in \cJ(\Psi_0)$ which is unique up to conjugation by $W_0$ such that
	$E(H^-)=\{v\in E(H^-_0):~cv=v\}$.
	In this way, we obtain a well-defined bijection
	\begin{equation}
		K(F)\backslash \Tor_G^\theta(F)\cong W_0\backslash\cJ(\Psi_0);~H\mapsto c.
		\label{eq:richardson--springer}
	\end{equation}

	\subsection{Proof of Theorem \ref{thm:tor}}\label{sec:tor}
	
	We denote the moduli scheme of pairs of Borel subgroups and their $\theta$-stable maximal tori by $\Kill_{G,\theta}$, i.e.,
	$\Kill_{G,\theta}=\Kill_G\times_{\Tor_G}\Tor_G^\theta$,
	where $\Kill_G\to \Tor_G$ is the projection. 
	
	\begin{lem}\label{lem:smooth}
		The $S$-schemes $\Tor_G^\theta$ and $\Kill_{G,\theta}$ are smooth.
	\end{lem}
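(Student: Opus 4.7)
The plan is to exploit the fiber product decomposition $\Kill_{G,\theta}=\Kill_G\times_{\Tor_G}\Tor_G^\theta$ and treat the two pieces separately, reducing everything to Definition-Proposition \ref{defn-prop:x^theta} and standard representability results from SGA 3.

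First I would establish smoothness of $\Tor_G^\theta$. By \cite[Corollaire 5.8.3]{MR0218363}, the moduli scheme $\Tor_G$ is smooth and affine (hence separated) over $S$. The involution $\theta$ of $G$ induces an involution on $\Tor_G$ by $H\mapsto \theta(H)$, and a routine check on functors of points identifies the moduli description of $\Tor_G^\theta$ (as $\theta$-stable maximal tori) with the fixed-point subscheme of this induced involution in the sense of Definition-Proposition \ref{defn-prop:x^theta}. Since $S$ is a $\bZ[1/2]$-scheme, part (4) of that result then yields the smoothness of $\Tor_G^\theta$ over $S$.

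Next I would handle $\Kill_{G,\theta}$ by showing that the forgetful projection $\pi\colon \Kill_G\to \Tor_G$, sending $(B,H)$ to $H$, is finite étale. Fppf-locally on $\Tor_G$ the torus $H$ admits a splitting and $\pi$ then becomes the trivial $W(\Delta(G,H))_{\Tor_G}$-torsor obtained from the simply transitive action of the Weyl group on the set of Borel subgroups containing $H$; in particular $\pi$ is finite étale. Base change along the locally closed immersion $\Tor_G^\theta\hookrightarrow \Tor_G$ produces a finite étale morphism $\Kill_{G,\theta}\to \Tor_G^\theta$, and composing with the smooth structure morphism of Step 1 delivers the smoothness of $\Kill_{G,\theta}$ over $S$.

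No serious obstacle is anticipated: the content reduces to combining Definition-Proposition \ref{defn-prop:x^theta}(4) with the standard fact that $\Kill_G\to\Tor_G$ is a Weyl-group torsor. The only verification requiring care is the identification of $\Tor_G^\theta$ as a literal fixed-point scheme for the induced involution, but this is immediate from the definitions.
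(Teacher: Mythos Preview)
Your argument is correct and is essentially the natural fleshing-out of the paper's one-line proof, which cites exactly the same two ingredients (\cite[Corollaire 5.8.3]{MR0218363} and Definition-Proposition \ref{defn-prop:x^theta}). For $\Tor_G^\theta$ your reasoning coincides verbatim with the paper's; for $\Kill_{G,\theta}$ you have made explicit the step the paper leaves implicit, namely that $\Kill_G\to\Tor_G$ is finite \'etale (a $W$-torsor) so that smoothness is inherited by base change to $\Tor_G^\theta$.
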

	
	\begin{proof}
		This is immediate from \cite[Corollaire 5.8.3]{MR0218363} and Definition-Proposition \ref{defn-prop:x^theta}.
	\end{proof}
	
	Let $\cB^{\theta\mathrm{-}\std}_G$ be the moduli space of Borel subgroups $B$ which are at standard position with $\theta(B)$, i.e., a presheaf over the category of $S$-schemes defined by
	\begin{equation}
		T\mapsto \{B\in \cB_G(T):~(B,\theta(B))~\mathrm{is~of~standard~position}\}.
		\label{eq:defn}
	\end{equation}
	
	\begin{prop}\label{prop:smooth}
		\begin{enumerate}
			\item The $S$-space $\cB^{\theta\mathrm{-}\std}_G$ is represented by a smooth quasi-compact separated $S$-scheme.
			\item A Borel subgroup $B$ of $G$ lies in $\cB^{\theta\mathrm{-}\std}_G(S)$ if and only if $B$ \'etale locally admits a $\theta$-stable maximal torus.
		\end{enumerate}
	\end{prop}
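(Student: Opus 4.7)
The plan is to realize $\cB^{\theta\mathrm{-}\std}_G$ as an open subscheme of $\cB_G$ for~(1), and for~(2) to use Hensel's lemma to lift a classical existence statement for $\theta$-stable maximal tori from geometric points to étale neighborhoods. For~(1), the involution induces a morphism $\psi\colon\cB_G\to\cB_G\times_S\cB_G$ sending $B\mapsto(B,\theta(B))$, and the definition~\eqref{eq:defn} yields
\[
\cB^{\theta\mathrm{-}\std}_G \;=\; \cB_G\times_{\cB_G\times_S\cB_G}\Stand_{G,\emptyset,\emptyset}.
\]
Since $\Stand_{G,\emptyset,\emptyset}$ is an open subscheme of $\cB_G\times_S\cB_G$ by \cite[Section~4.5.3]{MR0218364}, $\cB^{\theta\mathrm{-}\std}_G$ is open in $\cB_G$. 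As $\cB_G$ is smooth, quasi-compact, and separated (in fact projective) over $S$ by \cite[Corollaire~5.8.3]{MR0218363}, so is $\cB^{\theta\mathrm{-}\std}_G$.

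For the forward direction of~(2), if $B$ admits a $\theta$-stable maximal torus $H$ after some étale base change $S'\to S$, then $H=\theta(H)\subset B\cap\theta(B)$ over $S'$; this common maximal torus places $(B\times_S S',\theta(B)\times_S S')$ at standard position, so $B\times_S S'\in\cB^{\theta\mathrm{-}\std}_G(S')$. Since $\cB^{\theta\mathrm{-}\std}_G\hookrightarrow\cB_G$ is an open immersion, membership is an étale-local condition, whence $B\in\cB^{\theta\mathrm{-}\std}_G(S)$.

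For the converse, fix $B\in\cB^{\theta\mathrm{-}\std}_G(S)$ and set $L\coloneqq B\cap\theta(B)$, a smooth $\theta$-stable closed subgroup scheme of $G$ by the standard-position hypothesis. A $\theta$-stable maximal torus of $B$ automatically lies in $\theta(B)$, hence in $L$; conversely any $\theta$-stable maximal torus of $L$ is a $\theta$-stable maximal torus of $B$. Thus the task reduces to finding a $\theta$-stable maximal torus of $L$ étale locally. Let $X$ denote the moduli $S$-scheme of $\theta$-stable maximal tori of $L$, realized as the $\theta$-fixed subscheme of the smooth moduli scheme of maximal tori of $L$; by Definition-Proposition~\ref{defn-prop:x^theta}, $X$ is smooth over $S$. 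By Proposition~\ref{prop:hensel}, étale-local sections of $X\to S$ exist provided $X_{\bar{s}}\neq\emptyset$ at every geometric point $\bar{s}$. At such a point, $L_{\bar{s}}$ is a smooth connected solvable subgroup of $G_{\bar{s}}$ carrying the involution $\theta_{\bar{s}}$, and the existence of a $\theta_{\bar{s}}$-stable maximal torus follows from a classical cocycle argument (compare \cite[Section~9]{MR1066573}): choose $T\subset L_{\bar{s}}$ maximal, write $\theta_{\bar{s}}(T)=uTu^{-1}$ with $u\in L_{\bar{s}}$, and trivialize the resulting cocycle using 2-divisibility of $T$ over the algebraically closed ground field, where the characteristic-not-two hypothesis is essential. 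I expect this geometric-fiber existence to be the main obstacle, as it is the only step where the classical involution structure theory and the characteristic assumption really enter.
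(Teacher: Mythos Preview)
Your argument for~(1) has a genuine gap: $\Stand_{G,\emptyset,\emptyset}$ is \emph{not} an open subscheme of $\cB_G\times_S\cB_G$, and \cite[Section~4.5.3]{MR0218364} does not claim this. Over any algebraically closed field every pair of Borel subgroups is at standard position (\cite[Lemme~4.1.1]{MR0218364}), so the monomorphism $\Stand_{G,\emptyset,\emptyset}\hookrightarrow\cB_G\times_S\cB_G$ is bijective on field-valued points; were it an open immersion it would therefore be an isomorphism. But it is not: already for $G=\SL_2$ over $\bC[t]$, the pair $(B,B')$ with $B$ upper triangular and $B'=\left(\begin{smallmatrix}1&0\\t&1\end{smallmatrix}\right)B\left(\begin{smallmatrix}1&0\\-t&1\end{smallmatrix}\right)$ has non-flat intersection, hence is not at standard position. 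Consequently $\cB^{\theta\mathrm{-}\std}_G$ is not open in $\cB_G$ either; indeed the whole point of the construction (see the introduction) is that $\cB^{\theta\mathrm{-}\std}_G$ is \'etale locally a \emph{disjoint} union of $K$-orbits, killing the closure relations present in $\cB_G$. The paper instead realizes $\cB^{\theta\mathrm{-}\std}_G$ as the fixed-point subscheme of the involution $\eta\colon(B_1,B_2)\mapsto(\theta(B_2),\theta(B_1))$ on $\Stand_{G,\emptyset,\emptyset}$ via $B\mapsto(B,\theta(B))$, and then invokes Definition--Proposition~\ref{defn-prop:x^theta} together with the smoothness, quasi-compactness, and separatedness of $\Stand_{G,\emptyset,\emptyset}$ itself (as a disjoint union of homogeneous spaces $G/(B\cap B')$).

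For~(2) your outline matches the paper's, but two points need repair. First, in the ``if'' direction you again lean on the false open-immersion claim; you should instead argue directly that standard position descends along fpqc covers (smoothness of $B\cap\theta(B)$ is fpqc-local). Second, your cocycle sketch for the geometric-fiber existence is muddled: writing $\theta(T)=uTu^{-1}$ one may take $u$ in the unipotent radical $U$ of $L_{\bar s}$, and the condition $\theta^2=\id$ forces $\theta(u)u=e$ (using $N_L(T)\cap U=\{e\}$ since $T$ is a maximal torus of $G$); the cocycle to trivialize thus lives in $U$, not in $T$, so ``$2$-divisibility of $T$'' is not the relevant input. The clean reference, used in the paper, is Steinberg's theorem \cite[7.5.~Theorem]{MR0230728}.
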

	
	\begin{proof}
		Observe that $\Stand_{G,\emptyset,\emptyset}$ is \'etale locally expressed as the disjoint union of the $G$-orbits of the form $G/(B\cap B')$, where $(B,B')$ is a pair of Borel subgroups of standard position (recall \cite[Section 4.5.3]{MR0218364}). In particular, $\Stand_{G,\emptyset,\emptyset}$ is smooth, quasi-compact, and separated over $S$ (\cite[Proposition 4.5.1]{MR0218364}, Proposition \ref{prop:closed}, \cite[Corollaire 5.3.12]{MR0218363}, Lemma \ref{lem:quotient}).
		
		Define an involution $\eta$ on $\Stand_{G,\emptyset,\emptyset}$ by $(B_1,B_2)\mapsto (\theta(B_2),\theta(B_1))$. Then the assignment $B\mapsto (B,\theta(B))$ gives rise to an isomorphism
		$\cB^{\theta\mathrm{-}\std}_G\cong \Stand_{G,\emptyset,\emptyset}^\eta$.
		Part (1) now follows from Definition-Proposition \ref{defn-prop:x^theta}.
		
		We next prove (2). The ``if'' direction is evident by definition. Suppose that $(B,\theta(B))$ is of standard position. Then $B\cap \theta(B)$ is a smooth closed subgroup scheme of $G$. Moreover, if we write $\Tor_{B\cap \theta(B)}$ for the moduli space of maximal tori of $B\cap \theta(B)$, $\Tor_{B\cap \theta(B)}$ is represented by a smooth affine $S$-scheme (\cite[Proposition 4.5.1]{MR0218364}, \cite[Corollaire 5.4]{MR0232779}).
		
		Observe that $B\cap \theta(B)$ is $\theta$-stable in $G$. Hence $\Tor_{B\cap \theta(B)}$ is naturally equipped with an involution which we denote by $\theta$. Definition-Proposition \ref{defn-prop:x^theta} implies that $\Tor^\theta_{B\cap \theta(B)}$ is smooth over $S$. Moreover, its structure morphism is surjective by \cite[7.5. Theorem]{MR0230728}. Proposition \ref{prop:hensel} now implies that $B\cap \theta(B)$ and therefore $B$ \'etale locally admit a $\theta$-stable maximal torus. This proves the ``only if'' direction.	
	\end{proof}

	Let $s\in S$. Then we have a unique open $K_{\bar{s}}$-orbit $\cO_{\bar{s}}$ in $\cB_{G_{\bar{s}}}$.
	
	\begin{lem}\label{lem:effective}
		There exists a $K_s$-invariant open subvariety of $\cB_{G_s}$ which is a $\kappa(s)$-form of $\cO_{\bar{s}}$.
	\end{lem}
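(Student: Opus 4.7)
The plan is to construct the desired $\kappa(s)$-form of $\cO_{\bar{s}}$ directly as the locus in $\cB_{G_s}$ where the $K_s$-stabilizer has minimal dimension, using upper semicontinuity of fiber dimension rather than Galois descent (which would require additional care when $\kappa(s)$ is imperfect). First I would note that $K$ is smooth over $S$ by Definition-Proposition~\ref{defn-prop:x^theta}, so $K_s$ is a smooth $\kappa(s)$-group scheme acting on the smooth projective, geometrically irreducible $\kappa(s)$-variety $\cB_{G_s}$. I then form the stabilizer group scheme $p : \mathcal{Z} \to \cB_{G_s}$ as the pullback of the diagonal $\Delta_{\cB_{G_s}}$ along the graph morphism $K_s \times_{\kappa(s)} \cB_{G_s} \to \cB_{G_s} \times_{\kappa(s)} \cB_{G_s}$, $(k,B) \mapsto (k \cdot B, B)$, so that the fiber $p^{-1}(B)$ is the stabilizer $Z_{K_s}(B)$. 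Applying upper semicontinuity of fiber dimension (\cite[(13.1.3)]{MR0238860}) and composing with the identity section $e : \cB_{G_s} \to \mathcal{Z}$, $B \mapsto (e_B, B)$, yields that the function $\delta : \cB_{G_s} \to \bN$, $\delta(B) \coloneqq \dim Z_{K_s}(B)$, is upper semicontinuous on $\cB_{G_s}$. Here I use that a group scheme over a field is equidimensional, so $\dim Z_{K_s}(B)$ equals the local dimension $\dim_{e(B)} p^{-1}(B)$.

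Let $d_0 \coloneqq \delta(\eta)$, where $\eta$ is the generic point of $\cB_{G_s}$; by upper semicontinuity and irreducibility this is the minimum value of $\delta$. I then set $U \coloneqq \{B \in \cB_{G_s} : \delta(B) \leq d_0\}$, a nonempty open subscheme of $\cB_{G_s}$. It is $K_s$-invariant because conjugate stabilizers have equal dimension. To see that $U$ is a $\kappa(s)$-form of $\cO_{\bar{s}}$, I would check that $U_{\bar{s}} = \cO_{\bar{s}}$: stabilizer formation commutes with field base change, so $U_{\bar{s}}$ is the locus in $\cB_{G_{\bar{s}}}$ where $\dim Z_{K_{\bar{s}}}(B) \leq d_0$; on the geometric fiber this locus is the union of $K_{\bar{s}}$-orbits of maximal dimension, which by hypothesis is the single open orbit $\cO_{\bar{s}}$.

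The step requiring the most care will be matching the constant $d_0$ on the two sides. Concretely, I need the value of $\delta$ at the generic point of $\cB_{G_s}$ to coincide with the stabilizer dimension of a point of $\cO_{\bar{s}}$. This follows from geometric irreducibility of $\cB_G$: the unique generic point of $\cB_{G_{\bar{s}}}$ lies over $\eta$, and since $\cO_{\bar{s}}$ is open dense it contains this geometric generic point. Hence $d_0$ agrees with the common stabilizer dimension along $\cO_{\bar{s}}$, closing the argument.
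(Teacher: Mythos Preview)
Your proof is correct and takes a genuinely different route from the paper's. The paper argues by fpqc descent: it equips $\cO_{\bar{s}}$ with the restriction of the canonical descent datum on $\cB_{G_s}\otimes_{\kappa(s)}\kappa(\bar{s})$ (uniqueness of the open orbit is what makes the datum restrict), together with the restriction of an ample line bundle pulled back from $\cB_{G_s}$, and then invokes effectivity of descent for quasi-projective schemes (\cite[Section 6.1, Theorem 7]{MR1045822}). You instead construct the form directly as the minimal-stabilizer-dimension locus via Chevalley semicontinuity, bypassing descent entirely.

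What each approach buys: the paper's argument is short once one grants the descent machinery, and it is the template for the Remark immediately following the lemma (any uniquely characterized open subscheme over the algebraic closure descends). Your argument is more elementary and self-contained; it makes the $K_s$-invariance and the identification $U_{\bar{s}}=\cO_{\bar{s}}$ explicit without having to verify that the descent datum restricts over $\Spec(\kappa(\bar{s})\otimes_{\kappa(s)}\kappa(\bar{s}))$, and it sidesteps any concern about the nature of the extension $\kappa(s)\hookrightarrow\kappa(\bar{s})$. One minor correction: Chevalley's semicontinuity theorem (13.1.3) lives in EGA~IV$_3$, not in \cite{MR0238860}, which is EGA~IV$_4$.
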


	\begin{proof}
		Pick any ample line bundle $\cL$ on $\cB_{G_s}$. Then we obtain an ample line bundle $\cL'$ on $\cO_{\bar{s}}$ by the base change of $\cL$ to $\kappa(\bar{s})$ and the restriction. Moreover, the canonical descent datum on $(\cB_{G_s} \otimes_{\kappa(s)}\kappa(\bar{s}),\cL\otimes_{\kappa(s)} \kappa(\bar{s}))$ restricts to that on $(\cO_{\bar{s}},\cL')$ since the open orbit is unique. This descent datum is effective by \cite[Section 6.1, Theorem 7]{MR1045822}.
	\end{proof}
	
	\begin{rem}
		More generally, let $G$ be a smooth affine algebraic group over a field $F$, and $X$ be a geometrically connected smooth algebraic $G$-variety. Let $\bar{F}$ be an algebraic closure of $F$. Assume that $X\otimes_F \bar{F}$ admits an open $G\otimes_F \bar{F}$-orbit $\cO'$ (e.g.~$X\otimes_F \bar{F}$ has finitely many $G\otimes_F \bar{F}$-orbits). Then a similar argument implies that there exists a unique $G$-invariant open subvariety $\cO\subset X$ such that $\cO\otimes_F \bar{F}=\cO'$. 
	\end{rem}

	Let $\cO_s$ be the open subvariety of Lemma \ref{lem:effective}.
	Since $\cO_s$ is nonempty and smooth, one can find a finite separable extension $F/\kappa(s)$ and
	\[B_F\in \cO_s(F)\subset\cB_G(F)=\cB^{\theta\mathrm{-}\std}_G(F)\]
	(see \cite[Lemme 4.1.1]{MR0218364} for the last equality). Replace $F$ with a certain finite separable extension of $F$ to choose a $\theta$-stable maximal torus $H_F$ of $B_F$ (Proposition \ref{prop:smooth} (2)). We lift it to $(B_0,H_0)\in\Kill_{G,\theta}(S')$ for a certain \'etale neighborhood $S'$ of $s\in S$ by application of Hensel's lemma of \cite[Proposition 1.10]{MR0215856}. Since the dimension of fibers is locally constant, we may replace $S'$ with an open neighborhood of $s'$ to assume that the $K\times_S S'$-orbit is open in $\cB_{G\times_S S'}$ (\cite[Corollaire (17.9.5)]{MR0238860}).
	
	Henceforth we assume that there exists $(B_0,H_0)\in\Kill_{G,\theta}(S)$ such that the attached $K$-orbit in $\cB_G$ is open. We may assume that $(G,H_0)$ admits a splitting, and that $B_0$ is attached to a positive system (\cite[Corollaire 5.5.5]{MR0218363}). We then localize $S$ in the Zariski topology to assume that $\theta$ acts on the set of roots. Using the splitting, we define $\cJ(\Psi_0)$ and $W_0$ in a similar way to the former section.
	
	We next construct a morphism $tt:\Tor_G^\theta\to (W_0\backslash\cJ(\Psi_0))_S$. For this, we may define a $W_0\backslash\cJ(\Psi_0)$-valued locally constant function on $S$ for each $\theta$-stable maximal torus $H$ of $G$ by passage to base changes and replacing $S$ accordingly. Firstly, assume that there exists an element $k\in K(S)$ such that $(kHk^{-1})^-\subset H^-_0$ and that $(kHk^{-1})^-$ admits a splitting compatible with that of $H_0$ (i.e., the containment is induced from a map between the corresponding lattices). We remark that this holds true locally in the \'etale topology of $S$ by Propositions \ref{prop:multiplicative}, \ref{prop:transp}, \cite[9.6. Lemma]{MR1066573}, \cite[Proposition B.3.4]{MR3362641}, and \cite[Corollaire 1.5]{MR0212024}. Then the element of $W_0\backslash\cJ(\Psi_0)$ corresponding to $H_{\bar{s}}$ is independent of choice of a geometric point $\bar{s}$ of $S$. We put the constant function on $S$ valued at this element. Thanks to the injectivity of the map \eqref{eq:richardson--springer}, the \'etale local construction of this constant function gives rise to a locally constant function on $S$, which is independent of the choices.

	\begin{lem}
		The morphism $tt$ is smooth surjective.
	\end{lem}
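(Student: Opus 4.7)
The plan is to observe that the target $(W_0\backslash\cJ(\Psi_0))_S$ is, by the notational convention of the paper, literally the disjoint union $\coprod_{c\in W_0\backslash\cJ(\Psi_0)} S_c$ of copies of $S$ indexed by the finite set $W_0\backslash\cJ(\Psi_0)$. Hence to give $tt:\Tor_G^\theta\to (W_0\backslash\cJ(\Psi_0))_S$ is the same as to give a decomposition of $\Tor_G^\theta$ as a disjoint union of open and closed $S$-subschemes $\Tor_G^{\theta,c}\coloneqq tt^{-1}(S_c)$, and on each piece $tt$ is the structure morphism of $\Tor_G^{\theta,c}$ as an $S$-scheme. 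Each $\Tor_G^{\theta,c}$ is open in the smooth $S$-scheme $\Tor_G^\theta$ by Lemma \ref{lem:smooth}, hence smooth over $S$, so $tt$ is smooth.

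For surjectivity, since the target is a disjoint union of copies of $S$, it suffices to verify that for every $c\in W_0\backslash\cJ(\Psi_0)$ and every geometric point $\bar{s}$ of $S$, the geometric fiber $(\Tor_G^{\theta,c})_{\bar{s}}$ is nonempty. By the construction of $tt$ given above, a $\kappa(\bar{s})$-point of this fiber is precisely a $\theta_{\bar{s}}$-stable maximal torus of $G_{\bar{s}}$ whose Richardson--Springer class equals $c$. Nonemptiness is thus exactly the surjectivity of the bijection \eqref{eq:richardson--springer} recalled in Section \ref{sec:richardson--springer} from \cite[Section 9]{MR1066573}.

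Strictly speaking, the substantive work is not in the present lemma but in the already completed construction of $tt$ as a well-defined morphism of $S$-schemes: one must produce the $W_0\backslash\cJ(\Psi_0)$-valued locally constant function étale-locally via Propositions \ref{prop:multiplicative} and \ref{prop:transp} (to bring $H^-$ into $H^-_0$ compatibly with a splitting after étale base change) together with \cite[9.6. Lemma]{MR1066573}, and then verify independence of the choices using the injectivity of \eqref{eq:richardson--springer}. Once that construction is in place, the smoothness and surjectivity asserted here are essentially formal, reducing cleanly to Lemma \ref{lem:smooth} and to the bijectivity of the Richardson--Springer classification at each geometric fiber.
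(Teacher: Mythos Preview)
Your proof is correct and follows essentially the same approach as the paper. The paper's argument for smoothness invokes \cite[Proposition 2.4.1 (iv)]{MR2791606} (a morphism from a smooth $S$-scheme to an \'etale $S$-scheme is smooth) in one line, whereas you unpack this by hand using the disjoint-union structure of the constant target; for surjectivity both arguments pass to geometric fibers and appeal to the Richardson--Springer bijection \eqref{eq:richardson--springer}.
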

	
	\begin{proof}
		We see from Lemma \ref{lem:smooth} and \cite[Proposition 2.4.1 (iv)]{MR2791606} that $tt$ is smooth. To prove that $tt$ is surjective, we may pass to geometric fibers. Then the assertion follows from the bijection \eqref{defn-prop:x^theta}.
	\end{proof}
	
	Theorem \ref{thm:tor} (1) now follows from Propositions \ref{prop:type(R)}, \ref{prop:transp}, and the following general result:

	\begin{prop}\label{prop:singleorbit}
		Let $S$ be scheme, and $X$ be a smooth quasi-compact separated $S$-scheme, equipped with an action of a smooth affine group scheme $K$ over $S$. Let $f:X\to Y$ be a $K$-invariant surjective morphism of $S$-schemes with $Y$ finite \'etale. Assume the following conditions:
		\begin{enumerate}
			\item[(i)] For each geometric point $\bar{s}$ of $S$, every fiber of $\varphi_{\bar{s}}$ consists of a single $K_{\bar{s}}$-orbit.
			\item[(ii)] For every \'etale $S$-scheme $S'$ and an element $x\in X(S')$, the stabilizer subgroup of $K\times_S S'$ at $x$ is flat.
		\end{enumerate}
		Then $f$ descends to an isomorphism $K\backslash X\cong Y$. In particular, $K\backslash X$ is represented by a finite \'etale $S$-scheme.
	\end{prop}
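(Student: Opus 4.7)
The plan is to work \'etale-locally on $S$ to reduce to the case where $Y$ is a constant $S$-scheme, and then to exhibit each component of $X$ as a homogeneous $K$-space through a chosen base point. First I observe that $f$ is smooth: since $Y \to S$ is finite \'etale, after an \'etale base change we may write $Y = \coprod_{i=1}^n S$, in which case $X = \coprod_{i=1}^n X_i$ with each $X_i = f^{-1}(S_i)$ open and closed in $X$, hence smooth over $S$. Since $f$ is also surjective, Proposition \ref{prop:hensel} provides sections $x_i \in X_i(S)$ after a further \'etale refinement of $S$. Because both $K \backslash X$ and $Y$ are fppf sheaves on $S$-schemes, the question of whether the canonical map $K \backslash X \to Y$ induced by $f$ is an isomorphism may be checked after \'etale base change. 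So I may assume $Y = \coprod_{i=1}^n S$ with sections $x_i \in X_i(S)$, and the problem reduces to showing $K \backslash X_i \cong S$ for each $i$.

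Fix $i$ and consider the orbit morphism $\pi_i : K/Z_K(x_i) \to X_i$. Proposition \ref{prop:closed}(1) shows $Z_K(x_i)$ is closed in $K$ (using separatedness of $X$), and assumption (ii) shows it is flat over $S$; Lemma \ref{lem:quotient} then realizes $K/Z_K(x_i)$ as a smooth, quasi-compact, separated algebraic space over $S$, and $\pi_i$ becomes a $K$-equivariant monomorphism. Once $\pi_i$ is an isomorphism, the fppf quotient is
\[ K \backslash X_i \;\cong\; K \backslash (K/Z_K(x_i)) \;\cong\; S, \]
since the structure map of a homogeneous space represents the quotient by its transitive $K$-action. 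Reassembling over $i$ and descending from the \'etale cover then yields $K \backslash X \cong Y$.

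The heart of the argument is to show $\pi_i$ is an isomorphism. By hypothesis (i), each geometric fiber $\pi_{i,\bar s} : (K/Z_K(x_i))_{\bar s} \to X_{i,\bar s}$ is an isomorphism, since both sides are the single $K_{\bar s}$-orbit through $x_{i,\bar s}$. Because source and target of $\pi_i$ are flat and locally of finite presentation over $S$, the fiberwise flatness criterion promotes this fiberwise isomorphism to flatness of $\pi_i$. Being a monomorphism, each fiber of $\pi_i$ over $y \in X_i$ is either empty or equal to $\Spec \kappa(y)$, hence geometrically regular of dimension zero; combined with flatness and local finite presentation this forces $\pi_i$ to be \'etale. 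An \'etale monomorphism is an open immersion, and surjectivity (from (i), applied over each geometric point of $S$) then forces $\pi_i$ to be an isomorphism; in particular $K/Z_K(x_i)$ is identified with the scheme $X_i$.

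The main obstacle lies in this last step: assumption (ii) delivers only flatness, not smoothness, of the stabilizer, so \'etaleness of $\pi_i$ cannot be read off directly from Lemma \ref{lem:quotient} and must instead be extracted by combining the fiberwise flatness criterion with the monomorphism structure of $\pi_i$. A minor secondary technicality is that $K/Z_K(x_i)$ is \emph{a priori} only an algebraic space; this is harmless, since an \'etale monomorphism from an algebraic space into a scheme is automatically a morphism of schemes.
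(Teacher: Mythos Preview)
Your proof is correct and follows essentially the same strategy as the paper: localize \'etale on $S$ to reduce to $Y=S$, choose a base point via Hensel's lemma, and show the resulting orbit map is an isomorphism. The only difference is in this last step: you assert that each geometric fiber $\pi_{i,\bar s}$ is already an isomorphism and then feed this into the fiberwise flatness criterion plus the monomorphism structure, whereas the paper invokes \cite[Corollaire (17.9.5)]{MR0238860} to reduce outright to an algebraically closed field and then spells out why the orbit map is a bijective immersion of reduced schemes (citing \cite[Proposition 7.17]{MR3729270}); your one-line claim ``both sides are the single $K_{\bar s}$-orbit'' is exactly this fact, so the paper is simply more explicit at the one point where you are quick.
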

	
	\begin{proof}
		Since the assertion is local in the \'etale topology of $S$, we may assume that $Y\cong V_S$ for a certain finite set $V$. Then we have
		$X\cong\coprod_{v\in V} f^{-1}(\{v\}_S)$.
		Therefore we may assume that $Y=S$.
		
		We may assume $X(S)\neq\emptyset$ by Proposition \ref{prop:hensel}. Pick $x\in X(S)$. 
		Then $Kx$ is represented by a smooth quasi-compact separated $S$-scheme (Proposition \ref{prop:closed}, Lemma \ref{lem:quotient}). It will suffice to prove that the orbit map $i:Kx\hookrightarrow X$ is an isomorphism. In view of \cite[Corollaire (17.9.5)]{MR0238860}, we may assume $S=\Spec F$ with $F$ an algebraically closed field. In this case, $i$ is an immersion (\cite[Proposition 7.17]{MR3729270}). The map $i$ is an isomorphism from (i) since a bijective immersion between reduced schemes is an isomorphism in general. This completes the proof.
	\end{proof}
	
	Part (2) is clear from the preceding argument.

	\begin{rem}
		One sees from the proof that the \'etale topology is enough for the quotient in Theorem \ref{thm:tor}.
	\end{rem}
	
	\begin{rem}\label{rem:localconstancy}
		In view of the proof of Proposition \ref{prop:singleorbit} or of Proposition \ref{prop:hensel}, one can and do pick an $S$-section $H_{[c]}$ of $tt^{-1}(\{[c]\}_S)$ for every conjugacy class
		\[[c]\in W_0\backslash\cJ(\Psi_0)\]
		after certain \'etale localization of $S$. It is evident by construction of $tt$ that
		\[K/N_{K}(H_{[c]})\cong tt^{-1}(\{[c]\}_S).\]
		
		Since the local constancy hypothesis is local in the \'etale topology of $S$, it is equivalent to assuming that $W_K(G,H_{[c]})$ is finite \'etale for every conjugacy class $[c]$ at each \'etale locus. In particular, this hypothesis holds true if $S=\Spec F$ for a field $F$ of characteristic not two.
	\end{rem}

	\section{Decomposition of $\cB^{\theta\mathrm{-}\std}_G$}
	
	This section is devoted to the proofs of Theorem \ref{mainthm} and Variant \ref{var:mainthm}. Let $S,G,\theta,K$ be as in Theorem \ref{thm:tor} unless specified otherwise (see Proposition \ref{prop:singleorbit}).
	
	\subsection{Proof of Theorem \ref{mainthm} (1) and (2)}\label{sec:thm1,2}
	Throughout this section, we assume the local constancy hypothesis. We plan to establish a $K$-orbit decomposition locally in the \'etale topology of $S$. The key is to verify that $\cB^{\theta\mathrm{-}\std}_G$ is ``closure relation free'':
	
	\begin{lem}\label{lem:transB}
		Let $B,B'\in \cB^{\theta\mathrm{-}\std}_G(S)$. Then
		$\STransp_K(B,B')$ is a smooth $S$-scheme. In particular, the following conditions are equivalent:
		\begin{enumerate}
			\renewcommand{\labelenumi}{(\alph{enumi})}
			\item There are an \'etale covering $\{S_\lambda\to S\}$ and $k_\lambda\in K(S_\lambda)$ such that
			\[k_\lambda (B\times_S S_\lambda)k^{-1}_\lambda=B'\times_S S_\lambda;\]
			\item There are an fpqc covering $\{S_\lambda\to S\}$ and $k_\lambda\in K(S_\lambda)$ such that
			\[k_\lambda (B\times_S S_\lambda)k^{-1}_\lambda=B'\times_S S_\lambda;\]
			\item For every geometric point $\bar{s}$ of $S$, there exists $k\in K(\kappa(\bar{s}))$ such that
			\[k B_{\bar{s}} k^{-1}= B'_{\bar{s}};\]
			\item The structure morphism $\STransp_K(B,B')\to S$ is surjective.
		\end{enumerate}
	\end{lem}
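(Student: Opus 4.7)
My plan is to realize $\STransp_K(B,B')$ as the $\theta$-fixed scheme of a smooth separated auxiliary transporter and invoke Definition-Proposition \ref{defn-prop:x^theta}(4); once smoothness is in hand, the equivalence (a)--(d) is immediate from Proposition \ref{prop:transp}. Concretely, I would introduce $Z \coloneqq \STransp_G((B,\theta(B)),(B',\theta(B')))$, i.e.~the subfunctor of $G$ of sections $g$ with $gBg^{-1}=B'$ and $g\theta(B)g^{-1}=\theta(B')$. One readily verifies that $\theta$ restricts to an involution on $Z$ (applying $\theta$ exchanges the two defining conditions) and that $Z^\theta=\STransp_K(B,B')$: for $g\in K$ with $gBg^{-1}=B'$, the second condition $g\theta(B)g^{-1}=\theta(B')$ is automatic from $\theta(g)=g$.

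The crux is then to show that $Z$ is representable by a smooth separated $S$-scheme. Since $B,B'\in\cB^{\theta\mathrm{-}\std}_G(S)$, both $(B,\theta(B))$ and $(B',\theta(B'))$ are $S$-sections of $\Stand_{G,\emptyset,\emptyset}$, and $Z$ is the fiber over $(B',\theta(B'))$ of the orbit map $G\to\Stand_{G,\emptyset,\emptyset}$ for the diagonal $G$-action. Étale locally, $\Stand_{G,\emptyset,\emptyset}$ decomposes into a disjoint union of $G$-orbits $G/(B_1\cap B_2)$ (cf.~proof of Proposition \ref{prop:smooth}), so the orbit $G/(B\cap\theta(B))$ embeds étale locally as a clopen subscheme of $\Stand_{G,\emptyset,\emptyset}$. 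Equivalently, since $\StandType_{G,\emptyset,\emptyset}$ is finite étale, the equalizer $S_0\subset S$ of $t_2\circ(B,\theta(B))$ and $t_2\circ(B',\theta(B'))$ is a clopen subscheme; $Z$ is then a $B\cap\theta(B)$-torsor over $S_0$ and empty off $S_0$. As $B\cap\theta(B)$ is smooth by the standard-position hypothesis and $S_0\hookrightarrow S$ is a clopen immersion, $Z\to S$ is smooth, while separatedness follows from $Z\subset G$ with $G$ affine over $S$.

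Applying Definition-Proposition \ref{defn-prop:x^theta}(4) to $(Z,\theta|_Z)$ then yields smoothness of $\STransp_K(B,B')=Z^\theta$, and Proposition \ref{prop:transp} furnishes the equivalences (a)--(d). The main obstacle I anticipate is the bookkeeping surrounding the possibly empty fibers of $Z$: $(B,\theta(B))$ and $(B',\theta(B'))$ need not have matching $t_2$-type on all of $S$, and the torsor structure is only available over the clopen locus $S_0$; the finite étaleness of $\StandType_{G,\emptyset,\emptyset}$ is what allows this to be handled cleanly, essentially reducing the global smoothness claim to the smooth torsor part and the trivially smooth empty part.
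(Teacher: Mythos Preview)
Your proof is correct and takes a genuinely different route from the paper's. The paper argues under the local constancy hypothesis: it \'etale-localizes so that a full list of $\theta$-stable maximal tori $H_{[c]}$ and coset representatives $W(\Delta(G,H_{[c]}))'$ is available, translates $B$ and $B'$ by $K$ so that each contains some $H_{[c]}$, and then does a case split. If the tori or the Weyl-coset representatives differ, $\STransp_K(B,B')$ is empty by Matsuki's classification; if they agree, one is reduced to $B=B'$, where $N_K(B)=(B\cap\theta(B))^\theta$ is smooth by Definition--Proposition~\ref{defn-prop:x^theta}(4). Your argument bypasses this reduction entirely: you observe directly that $\STransp_K(B,B')=Z^\theta$ for the $G$-transporter $Z=\STransp_G\big((B,\theta(B)),(B',\theta(B'))\big)$, and that $Z$ is smooth because it is a $(B\cap\theta(B))$-torsor over the clopen locus where the $t_2$-types agree and empty elsewhere.

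The upshot is that your proof actually establishes the lemma \emph{without} invoking the local constancy hypothesis at all; the paper's proof uses that hypothesis in an essential way to set up the case analysis (the choice of $W(\Delta(G,H_{[c]}))'$). What the paper's approach buys is that the same \'etale-local normal form immediately feeds into the orbit decomposition $\coprod_v K/(wB_{[c]}w^{-1}\cap K)\cong \cB^{\theta\text{-}\std}_G$ needed for Theorem~\ref{mainthm}(1); your argument is cleaner for the lemma in isolation but one still needs the hypothesis (or something like it) to produce the finite index set $V$ downstream. One small point: you should perhaps make explicit that $Z$ is a closed subscheme of $G$ (pullback of the closed section $(B',\theta(B'))$ of the separated $\Stand_{G,\emptyset,\emptyset}$), so that separatedness of $Z$ over $S$ is immediate and Definition--Proposition~\ref{defn-prop:x^theta}(4) applies.
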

	
	For the proofs of this lemma and Theorem \ref{mainthm} (1), we can work locally in the \'etale topology of $S$. In view of Remark \ref{rem:localconstancy}, we may suppose that we are given finitely many $\theta$-stable maximal tori $H_{[c]}$ indexed by a finite set $\bar{\cJ}$ such that $[c]\mapsto H_{[c]}$ determines an isomorphism $\bar{\cJ}_S\cong K\backslash \Tor^\theta_G$. We localize $S$ to pick a splitting of $(G,H_{[c]})$ with $M_{[c]}$ the lattice of ``constant'' characters of $H_{[c]}$ (\cite[Corollaire 2.3]{MR0218363}). For later applications, let us fix a Borel subgroup $B_{[c]}$ attached to a positive system of $\Delta(G,H_{[c]})$. Work locally in the Zariski topology of $S$ to assume that $\theta$ respects $M_{[c]}$ and the set $\Delta(G,H_{[c]})$ of roots. Then identify $W(G,H_{[c]})$ with $W(\Delta(G,H_{[c]}))_S$ (\cite[Proposition 3.4]{MR0218363}). Thanks to the local constancy hypothesis, we may identify the canonical injective homomorphism $W_K(G,H_{[c]})\hookrightarrow W(G,H_{[c]})$ with the constant inclusion map $W_K(\Delta(G,H_{[c]}))_S\hookrightarrow W(\Delta(G,H_{[c]}))_S$ for a certain subgroup $W_K(\Delta(G,H_{[c]}))$. We fix a set $W(\Delta(G,H_{[c]}))'$ of complete representatives of the coset
	\[W_K(\Delta(G,H_{[c]}))\backslash W(\Delta(G,H_{[c]}))\]
	such that $W(\Delta(G,H_{[c]}))'\cap W_K(\Delta(G,H_{[c]}))=\{e\}$. Set $V\coloneqq \coprod_{[c]} W(\Delta(G,H_{[c]}))'$. For each $w\in W(\Delta(G,H_{[c]}))'$, we pick a representative in $N_G(H_{[c]})(S)$. Since $N_K(H_{[c]})$ is smooth over $S$, work locally in the \'etale topology of $S$ to assume that every element of $W_K(\Delta(G,H_{[c]}))$ admits a lift in $N_K(H_{[c]})(S)$ (Proposition \ref{prop:hensel}).
	
	\begin{proof}[Proof of Lemma \ref{lem:transB}]
		Since the statement is invariant under \'etale local $K$-conjugation of the Borel subgroups, we may assume that $B$ and $B'$ contains $H_{[c]}$ and $H_{[c']}$ for some $[c],[c']\in\bar{\cJ}$ respectively (Proposition \ref{prop:smooth} (2)). If $[c]\neq [c']$, $B$ and $B'$ are not $K$-conjugate to each other at any geometric point by \cite[4.4. Corollary]{MR803346}. In particular, we have $\STransp_K(B,B')=\emptyset$ in this case. This is a smooth $S$-scheme.
		
		Henceforth we let $[c]=[c']$. We may assume that $B'$ is a certain $W(\Delta(G,H_{[c]}))$-translation of $B$ by \cite[Corollaire 5.5.5]{MR0218363}. Translating $B'$ by $N_K(H_{[c]})(S)$, we may assume $B'=wBw^{-1}$ for some $w\in W(\Delta(G,H_{[c]}))'$. If $w\neq e$, $B$ and $B'$ are not $K$-conjugate to each other at any geometric point by Theorem \ref{thm:matsuki} (recall the choice on the complete representatives). In particular, we have $\STransp_K(B,B')=\emptyset$ in this case. This is a smooth $S$-scheme. If $w=e$, we obtain $B=B'$. In this case, we have $N_K(B)=B\cap K=(B\cap \theta(B))^\theta$, which is smooth over $S$ (\cite[Corollaire 5.8.3]{MR0218363} and Definition-Proposition \ref{defn-prop:x^theta}). This completes the proof.
		
	\end{proof}

	Under the current hypothesis, we have a morphism
	\[\coprod_{\overset{[c]\in\bar{\cJ}}{w\in W(\Delta(G,H_{[c]}))'}} K/(wB_{[c]}w^{-1}\cap K)\to \cB^{\theta\mathrm{-}\std}_G.\]
	This is an isomorphism from Theorem \ref{thm:matsuki} and Lemma \ref{lem:transB}. Take the quotient by $K$ to deduce $V_S\cong \rtype_\emptyset(G,\theta)$. This completes the proof of Theorem \ref{mainthm} (1). Part (2) is evident from the preceding argument.
	
	\begin{rem}
		We realize from the proof that the \'etale topology is enough for the representability of the quotient $\rtype_\emptyset(G,\theta)$ in (1).
	\end{rem}
	
	\subsection{Beilinson--Bernstein's affinity theorem}\label{sec:bb}
	
	Let $\varphi$ denote the composite map
	\[\cB^{\theta\mathrm{-}\std}_G\overset{(\id,\theta)}{\to}
	\Stand_{G,\emptyset,\emptyset}\overset{t_2}{\to}
	\StandType_{G,\emptyset,\emptyset},\]
	where $(\id,\theta)$ is defined by $B\mapsto (B,\theta(B))$. 
	
	\begin{lem}\label{lem:varphi_smooth}
		The morphism $\varphi$ is smooth.
	\end{lem}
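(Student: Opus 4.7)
The plan is to reduce the claim to a general smoothness-composition principle. First, I would recall from the notation section that the type scheme $\StandType_{G,\emptyset,\emptyset}$ is represented by a finite \'etale $S$-scheme; in particular its structure morphism $p:\StandType_{G,\emptyset,\emptyset}\to S$ is \'etale. Second, Proposition \ref{prop:smooth}(1) tells us that the source $\cB^{\theta\mathrm{-}\std}_G$ is a smooth $S$-scheme. Since the composition $p\circ \varphi$ is nothing but the structure morphism $\cB^{\theta\mathrm{-}\std}_G\to S$ (the definition of $\varphi$ as $t_2\circ(\id,\theta)$ is by construction an $S$-morphism), this composition is smooth.

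The key step is then to invoke the standard fact that if $g\colon Y\to Z$ is \'etale and $g\circ f\colon X\to Z$ is smooth, then $f$ is smooth (see \cite[Proposition (17.11.1)]{MR0238860}). Applying this with $f=\varphi$ and $g=p$ yields smoothness of $\varphi$ immediately. I do not foresee any substantive obstacle: the statement is essentially a formal consequence of the smoothness of $\cB^{\theta\mathrm{-}\std}_G$ established in Proposition \ref{prop:smooth}(1) together with the finite \'etaleness of the target recalled from \cite[Section 4.5.3]{MR0218364}. The only point requiring minor care is confirming that $\varphi$ is indeed defined as a morphism to $\StandType_{G,\emptyset,\emptyset}$ (rather than, say, only to $\Stand_{G,\emptyset,\emptyset}$), but this is built into the definition displayed just above the lemma.
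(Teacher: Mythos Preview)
Your argument is correct and is essentially the same as the paper's: the paper also deduces smoothness of $\varphi$ from the smoothness of $\cB^{\theta\mathrm{-}\std}_G$ over $S$ (Proposition~\ref{prop:smooth}(1)) together with the finite \'etaleness of $\StandType_{G,\emptyset,\emptyset}$, citing \cite[Proposition 2.4.1 (iv)]{MR2791606} in place of your EGA reference for the cancellation principle.
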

	
	\begin{proof}
		This is immediate from the fact that $\StandType_{G,\emptyset,\emptyset}$ is (finite) \'etale and \cite[Proposition 2.4.1 (iv)]{MR2791606}.
	\end{proof}

	\begin{lem}\label{lem:affine}
		Let $w\in \StandType_{G,\emptyset,\emptyset}(S)$. Then $\varphi^{-1}(w)$ is affinely imbedded into $\cB_G$. 
	\end{lem}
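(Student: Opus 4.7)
My plan is to factor the immersion $i_w \colon \varphi^{-1}(w) \hookrightarrow \cB_G$ through the fiber $Y_w \coloneqq t_2^{-1}(\{w\}) \subset \Stand_{G,\emptyset,\emptyset}$ of the type morphism, with one factor a closed immersion and the other an affine morphism; this will immediately yield the desired affine immersion statement. For the factorization, since $\cB_G$ is proper (hence separated) over $S$, the graph $(\id, \theta) \colon \cB_G \to \cB_G \times_S \cB_G$ of $\theta$ is a closed immersion. Base-changing along the open immersion $\Stand_{G,\emptyset,\emptyset} \hookrightarrow \cB_G \times_S \cB_G$ yields a closed immersion $\cB^{\theta\mathrm{-}\std}_G \hookrightarrow \Stand_{G,\emptyset,\emptyset}$ realized by $B \mapsto (B, \theta(B))$. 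Since $\StandType_{G,\emptyset,\emptyset}$ is finite \'etale over $S$, the section $w$ cuts out an open-and-closed subscheme of $\StandType_{G,\emptyset,\emptyset}$, and hence $Y_w \hookrightarrow \Stand_{G,\emptyset,\emptyset}$ is open-and-closed. Restricting the previous closed immersion to the preimage of $Y_w$ gives a closed immersion $j \colon \varphi^{-1}(w) \hookrightarrow Y_w$. Writing $p_1 \colon Y_w \to \cB_G$ for the restriction of the first projection $\cB_G \times_S \cB_G \to \cB_G$, one has $p_1 \circ j = i_w$ by construction.

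The crucial step is to establish that $p_1$ is an affine morphism. Working \'etale locally on $S$, I would use that $Y_w$ becomes a single diagonal $G$-orbit $G/(B_0 \cap B_0')$, for a pair $(B_0, B_0')$ of Borel subgroups in standard position of type $w$, so that $p_1$ is identified with the canonical fibration $G/(B_0 \cap B_0') \to G/B_0$. Its fiber over $B_0$ is $B_0/(B_0 \cap B_0')$, which by the structure theory of pairs of Borels in standard position (cf.~\cite[Section 4.5]{MR0218364}) is a product of root subgroups, hence an affine space. Via the Zariski-local triviality of $G \to G/B_0$ coming from the Bruhat decomposition, $p_1$ becomes, locally on $\cB_G$, a trivial affine-space bundle, and therefore affine; since affineness is fpqc-local on the target, $p_1$ is affine globally. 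Consequently $i_w = p_1 \circ j$ is a composition of a closed immersion and an affine morphism, hence affine, and being an immersion by construction, it is an affine immersion.

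The main obstacle will be the affineness of $p_1$, which requires setting up the Bruhat/Schubert structure of $Y_w \to \cB_G$ carefully over a general base $S$. The key inputs --- that $B_0 \cap B_0'$ is a semi-direct product of a maximal torus with a product of root subgroups, and that $G \to G/B_0$ is \'etale-locally trivial as a $B_0$-torsor --- are both available in SGA 3, so the scheme-theoretic enhancement should proceed without serious trouble.
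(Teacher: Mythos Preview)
Your proposal follows essentially the same strategy as the paper's proof: both observe that the graph $(\id,\theta)$ realizes $\varphi^{-1}(w)$ as a closed subscheme of $Y_w=t_2^{-1}(w)$, and both reduce to an affineness assertion about $Y_w$. The paper shows that $Y_w\hookrightarrow\cB_G\times_S\cB_G$ is an affine immersion by identifying it, after splitting $G$, with $G\times^B\cB_{G,w}\hookrightarrow G\times^B\cB_G$ and invoking faithfully flat descent for the affine immersion $\cB_{G,w}\hookrightarrow\cB_G$ of the Bruhat cell; you instead show that the first projection $p_1\colon Y_w\to\cB_G$ is affine via the fibration $G/(B_0\cap B_0')\to G/B_0$. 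These are equivalent packagings---the fiber of $p_1$ is precisely the Bruhat cell $\cB_{G,w}\cong B_0/(B_0\cap B_0')$---and your appeal to Zariski-local triviality of $G\to G/B_0$ could equally well be replaced by the paper's fppf descent.

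There is, however, one genuine slip. The map $\Stand_{G,\emptyset,\emptyset}\to\cB_G\times_S\cB_G$ is \emph{not} an open immersion: \'etale locally $\Stand_{G,\emptyset,\emptyset}$ is the disjoint union $\coprod_w Y_w$ of $G$-orbits (as the paper recalls in the proof of Proposition~\ref{prop:smooth}), and the resulting map to the fiberwise irreducible product is a monomorphism that is bijective on geometric points but certainly not an immersion. This does not affect your argument that $\varphi^{-1}(w)\hookrightarrow Y_w$ is a closed immersion---only the monomorphism property of $\Stand_{G,\emptyset,\emptyset}\to\cB_G\times_S\cB_G$ is needed for the base-change identification---but it does undercut your claim that $i_w$ is ``an immersion by construction'': from your factorization $i_w=p_1\circ j$ with $j$ a closed immersion and $p_1$ affine you only obtain that $i_w$ is an affine monomorphism, which is not automatically an immersion. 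The repair is exactly what the paper does: each individual $Y_w$ \emph{is} a locally closed subscheme of $\cB_G\times_S\cB_G$ (it is \'etale-locally a single $G$-orbit, cf.\ \cite[Section 4.5]{MR0218364}), so $\varphi^{-1}(w)=Y_w\cap\Delta_\theta$ is locally closed in $\Delta_\theta\cong\cB_G$, and then your affineness argument completes the proof.
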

	
	\begin{proof}
		Define a closed immersion $\cB_G\to \cB_G\times_S \cB_G$ by $B\mapsto (B,\theta(B))$ (recall that $\cB_G$ is separated over $S$). Let $\Delta_\theta$ be the corresponding closed subscheme. Then $\varphi^{-1}(w)$ is isomorphic to the intersection of $t^{-1}_{2}(w)$ and $\Delta_\theta$. Therefore it will suffice to show that the canonical map $t^{-1}_{2}(w)\hookrightarrow \cB_G\times_S \cB_G$ is an affine immersion.
		
		Since the assertion is local in the \'etale topology of $S$, we may assume that $G$ admits a splitting with $H$ a split maximal torus. Fix a positive system $\Delta^+(G,H)$. Then $\StandType_{G,\emptyset,\emptyset}$ is identified with $W(\Delta(G,H))_S$. One can then assume that $w$ is constant. 
		
		Let $B$ be the Borel subgroup attached to $\Delta^+(G,H)$. Let $\cB_{G,w}$ be the $B$-orbit attached to $w$ (cf.~\cite[Section 4.5.5]{MR0218364}). Identify the inclusion map $t^{-1}_2(w)\hookrightarrow\cB_G\times_S \cB_G$ with the map
		$G\times^B\cB_{G,w}\hookrightarrow G\times^B\cB_G$ which is induced from $\cB_{G,w}\subset\cB_G$. Observe that $\cB_{G,w}$ is affinely imbedded into $\cB_G$ by the arguments of \cite[Part II, Sections 13.1 and 13.2]{MR2015057}. It then follows from the faithfully flat descent that the map
		\[G\times^B\cB_{G,w}\hookrightarrow G\times^B\cB_G\]
		is an affine immersion (cf.~\cite[Part I, Section 5.14]{MR2015057}). This completes the proof.
	\end{proof}
	
	\begin{proof}[Proof of Theorem \ref{mainthm} (3)]
		Suppose that the fppf quotient $\rtype_\emptyset(G,\theta)$ is represented by a finite \'etale $S$-scheme. Without loss of generalities, we may assume that $\rtype_\emptyset(G,\theta)\cong V_S$ for a finite set $V$. Similarly, we may assume that it is isomorphic to the constant $S$-scheme $W_S$ for a certain finite set $W$ since $\StandType_{G,\emptyset,\emptyset}$ is finite \'etale. Since $\varphi$ is $K$-equivariant, it descends to a map
		\begin{equation}
			V_S\cong \rtype_\emptyset(G,\theta)\to\StandType_{G,\emptyset,\emptyset}\cong W_S.\label{eq:quot}
		\end{equation}
		Localize $S$ in the Zariski topology to assume that \eqref{eq:quot} is induced from a map
		\[\bar{\varphi}:V\to W.\]
		
		In view of Lemma \ref{lem:affine}, the proof will be completed by showing that the map
		$rt^{-1}(v)\hookrightarrow \varphi^{-1}(\bar{\varphi}_{S}(v))$
		is an affine immersion for every $v\in V_S(S)$. Actually, we may only prove it when $v$ is constant. For this, let $w\in W$. Then we have a canonical isomorphism
		$\coprod_{v\in \bar{\varphi}^{-1}(w)} rt^{-1}(\{v\}_S) \cong \varphi^{-1}(\{w\}_S)$
		by $\varphi=\bar{\varphi}_S\circ rt$. In particular, the map
		$rt^{-1}(\{v\}_S)\hookrightarrow \varphi^{-1}(\{w\}_S)$
		is an open and closed (thus affine) immersion into $\varphi^{-1}(\{w\}_S)$ for every $v\in \bar{\varphi}^{-1}(w)$. Run through all $w$ to deduce the assertion.
	\end{proof}

	\begin{rem}\label{rem:comparison}
		In the original proof by Beilinson--Bernstein, they considered a twisted action on $\cB_G\times_S\cB_G$. Here let us note the relation of the approaches of theirs and ours.
		
		We define an involution $\id\times \theta$ on $\cB_G\times_S \cB_G$ by $(B,B')\mapsto (B,\theta(B'))$. We write $\rho$ for the twist of the diagonal action of $G$ on $\cB_G\times_S \cB_G$ by this involution. Namely, $\rho$ is given by
		$\rho(g)(B,B')=(gBg^{-1},\theta(g)B'\theta(g)^{-1})$.
		Let $\Stand_{G,\theta,\emptyset,\emptyset}$ be the moduli $S$-space of pairs of Borel subgroups $(B,B')$ such that $(B,\theta(B'))$ are of standard position. Then $\Stand_{G,\theta,\emptyset,\emptyset}$ is identified with the pullback of $\Stand_{G,\emptyset,\emptyset}\subset \cB_G\times_S \cB_G$
		along the involution $\id\times \theta$. We put an action of $G$ on $\Stand_{G,\theta,\emptyset,\emptyset}$ by the transfer of the diagonal action on $\Stand_{G,\emptyset,\emptyset}$, which agrees with the restriction of $\rho$. In particular, the map $(B,B')\mapsto (B,\theta(B'))$ determines a $G$-equivariant isomorphism
		\begin{equation}
			\Stand_{G,\emptyset,\emptyset}\cong\Stand_{G,\theta,\emptyset,\emptyset}.\label{eq:isom1}
		\end{equation}
		We set $\StandType_{G,\theta,\emptyset,\emptyset}\coloneqq G\backslash\Stand_{G,\theta,\emptyset,\emptyset}$.
		It is evident that the isomorphism \eqref{eq:isom1} descends to
		\begin{equation}
			\StandType_{G,\emptyset,\emptyset}\cong
			\StandType_{G,\theta,\emptyset,\emptyset}.\label{eq:isom2}
		\end{equation}
		We denote the quotient map
		$\Stand_{G,\theta,\emptyset,\emptyset}\to \StandType_{G,\theta,\emptyset,\emptyset}$
		by $t_{2,\theta}$. Let \[(\id,\id):\cB^{\theta\mathrm{-}\std}_G\to\Stand_{G,\theta,\emptyset,\emptyset}\]
		be the diagonal map.
		Then we obtain a commutative diagram
		\[\begin{tikzcd}
			\cB^{\theta\mathrm{-}\std}_G\ar[r, "{(\id,\theta)}"]\ar[rd, "{(\id,\id)}"']
			\ar[rr, bend left, "\varphi"]
			&\Stand_{G,\emptyset,\emptyset}
			\ar[r, "t_2"]\ar[d, "\sim"{sloped,below}, "\eqref{eq:isom1}"]
			&\StandType_{G,\emptyset,\emptyset}\ar[d, "\sim"{sloped,below}, "\eqref{eq:isom2}"]\\
			&\Stand_{G,\theta,\emptyset,\emptyset}\ar[r, "t_{2,\theta}"]
			&\StandType_{G,\theta,\emptyset,\emptyset}.
		\end{tikzcd}\]
		Let $\varphi'$ be the composition of $\varphi$ and \eqref{eq:isom2}. Beilinson--Bernstein studied the intersection of $(\varphi')^{-1}(w)$ with the closed subscheme of diagonals in $\cB_G\times_S \cB_G$.
		
		To be more precise, they define $(\varphi')^{-1}(w)$ as a certain $G$-orbit and took the intersection in $\cB_G\times_S \cB_G$ (recall that they work over algebraically closed fields of characteristic zero). Since the orbit lies in $\Stand_{G,\theta,\emptyset,\emptyset}$, the resulting intersections of theirs and ours are same. For its scheme-theoretic decomposition, they saw the tangent spaces. On the other hand, we relate it with the scheme $\cB^{\theta\mathrm{-}\std}_G$ which we already knew that it is (\'etale locally) decomposed into $K$-orbits as an $S$-scheme.
		
		One could use $\varphi'$ for the proof of Theorem \ref{mainthm} (3). We adopted $\varphi$ because of consistency with \cite{MR1066573}; this will help us to use results of \cite{MR1066573} more directly in latter sections.
	\end{rem}

	\subsection{Image of the Springer morphism}\label{sec:var}
	
	In this section, we aim to prove Variant \ref{var:mainthm}. Let $\cI'_{G,\theta}$ be the image of $\varphi$. It is open in $\StandType_{G,\emptyset,\emptyset}$ since $\varphi$ is smooth (Lemma \ref{lem:varphi_smooth}). 
	
	We wish to prove Variant \ref{var:mainthm} (1) by using the combinatorial criterion for the image over algebraically closed fields of characteristic not two in \cite[7.13 Theorem]{MR1066573}. For this, we start with introducing the scheme of twisted involutions, based on \cite[1.8 Remark]{MR1066573}: Observe that the involution $\eta$ in the proof of Proposition \ref{prop:smooth} descends to that on $\StandType_{G,\emptyset,\emptyset}$, though it does not respect the diagonal action of $G$. We denote its fixed point closed subscheme by $\cI_{G,\theta}$. Then $\varphi$ factors through $\cI_{G,\theta}$. We remark that $\cI_{G,\theta}$ is \'etale locally expressed as follows: Suppose that we are given a $\theta$-stable maximal torus $H$ and a Borel subgroup $B$ containing $H$ (recall Lemma \ref{lem:smooth} for \'etale local existence). Then the map
	$N_G(H)\to \StandType_{G,\emptyset,\emptyset};~w\mapsto (B,wBw^{-1})$
	gives rise to an isomorphism
	\begin{equation}
		\iota_{B,H}:W(G,H)\cong \StandType_{G,\emptyset,\emptyset}.\label{eq:standtype}
	\end{equation}
	Let $w_\theta$ be the preimage of $(B,\theta(B))\in \StandType_{G,\emptyset,\emptyset}(S)$. 
	Under the identification \eqref{eq:standtype}, the involution on $W(G,H)$ obtained by transferring $\eta$ is given by $w\mapsto w^{-1}_\theta \theta(w)^{-1}w_\theta$.

	\begin{ex}\label{ex:I_G}
		Pick a splitting with a split maximal torus $H$, which is compatible with $\theta$. Identify $W(G,H)$ with $W(\Delta(G,H))_S$. Choose a positive system $\Delta^+(G,H)$ to define a Borel subgroup containing $H$. Then $w_\theta$ defined above is the constant section (i.e., an element of $W(\Delta(G,H))$) satisfying
		$\theta(\Delta^+(G,H))=w_\theta \Delta^+(G,H)$.
		In particular, the involution on $W(G,H)$ respects the constant sections. As a consequence, we get
		$\cI_{G,\theta}\cong\{w\in W(\Delta(G,H)):~\theta(w)w_\theta w=w_\theta\}_S$.
	\end{ex}

	\begin{proof}[Proof of Variant \ref{var:mainthm} (1)]
		Since the definition of $\cI'_{G,\theta}$ is compatible with base changes (use the fact that $\varphi$ is universally open if necessary), we may work locally in the \'etale topology of $S$ to suppose that we are given $(B,H)\in\Kill^\theta_G(S)$ (recall that $\Kill_G$ is smooth over $S$). Moreover, we may assume that $(G,H,B)$ admits a splitting which is compatible with $\theta$. We identify $\StandType_{G,\emptyset,\emptyset}$ with $W(\Delta(G,H))_S$. We also choose $B_0$ as in Section \ref{sec:tor}. Set $a_{\max}=\varphi(B_0)$. We may assume that $a_{\max}$ is constant.
		
		We now define the monoid $M$ and its twisted action $\ast$ on the set
		\[I_{G,\theta}\coloneqq\{w\in W(\Delta(G,H)):~\theta(w) w=e\}\]
		as in \cite[Section 3.10 and the paragraph below Section 3.11]{MR1066573}. Set
		\[I'_{G,\theta}\coloneqq\{a\in I_{G,\theta}:~a_{\max}\in M\ast a\}.\]	
		Then see geometric-fiberwisely to conclude that $\varphi$ maps onto $(I'_{G,\theta})_S$ by \cite[7.13 Theorem]{MR1066573}. This completes the proof.
	\end{proof}

	Variant \ref{var:mainthm} (2) then follows from Lemma \ref{lem:affine} and Proposition \ref{prop:singleorbit}.

	\section{Examples}\label{sec:ex}
	In this section, we see concrete examples. 
	
	\subsection{Example I}\label{sec:ex1}
	
	Let $S'\to S$ be a double Galois covering of schemes over $\bZ[1/2]$ with Galois involution $\bar{}$, $G_0$ be a reductive group scheme over $S$. Put
	\[G=\Res_{S'/S} (G_0\times_S S'),\]
	where $\Res_{S'/S}$ is the Weil restriction. Put an involution $\theta$ on $G$ by the Galois involution. Then the canonical map $G_0\to G$ gives rise to an isomorphism $G_0\cong K$.
	
	Observe that we have a splitting
	$G\times_S S'\cong (G_0\times_SS')^2$.
	Through this isomorphism, $\theta\times_S S'$ is identified with the switch of the factors. Therefore the assumption of Variant \ref{var:mainthm} holds true and 
	$\rtype_\emptyset(G,\theta)\cong\cI_{G,\theta}$ by \cite[10.1 Example]{MR1066573}.

	We wish to compute $\cI_{G,\theta}$. For simplicity, suppose that we are given a maximal torus $H_0\subset G_0$ and a Borel subgroup $B'_0\subset G_0\times_S S'$ containing $H_0\times_S S'$. Set 
	\[w_c=\iota^{-1}_{B'_0,H_0\times_S S'}(B'_0,\bar{B}'_0)\in 
	W(G_0\times_S S',H_0\times_S S')(S').\]
	There are two ways to compute $\cI_{G,\theta}$. One is to use the description of \cite[10.1 Example]{MR1066573} over $S'$ and to determine the induced Galois action. Identify $G\times_S S'$ with $(G_0\times_S S')^2$. Then $\iota_{(B'_0)^2,(H_0\times_S S')^2}$ induces an isomorphism of $\StandType^2_{G_0\times_S S',\emptyset,\emptyset}$ with $W(G_0,H_0)^2\times_S S'$. The induced involution is $(w,w')\mapsto ((w')^{-1},w^{-1})$. The attached fixed point closed subscheme of $W(G_0,H_0)^2\times_S S'$ is defined by $w'=w^{-1}$.
	It remains to determine the Galois action. For this, let $T$ be an $S$-scheme. Then the induced Galois action on $W(G_0,H_0)^2(T\times_S S')$ is given by
	\[(w,w')\mapsto (w^{-1}_c \bar{w}^{-1}w_c,w^{-1}_c (\bar{w}')^{-1}w_c),\]
	where $\bar{w}$ and $\bar{w}'$ are the usual conjugation.
	The first projection gives rise to an isomorphism from $\cI_{G,\theta}$ onto the closed subscheme of $\Res_{S'/S} (W(G_0,H_0)\times_S S')$ defined by $\bar{w}= w_cww^{-1}_c$.
	
	\begin{rem}\label{rem:Gal}
		The Galois involution on $W(G_0,H_0)\times_S S'$ induced from $\iota_{B'_0,H_0\times_S S'}$ and $\StandType_{G,\emptyset,\emptyset}$ is $w\mapsto w^{-1}_c\bar{w}w_c$, which is different from the involution appeared above. In particular, $\cI_{G,\theta}$ is not isomorphic to $W(G_0,H_0)$ in general, rather typically when $(G_0,H_0)$ is split.
	\end{rem}
	
	The second way is to realize $\cI_{G,\theta}$ directly as a subscheme of
	\[\Res_{S'/S} (W(G_0,H_0)\times_S S').\]
	Set $B=\Res_{S'/S} B'_0$ and $H=\Res_{S'/S} (H_0\times_S S')$ to define $(B,H)\in \Kill_{G_0}(S')$. Observe that we have a canonical isomorphism
	\begin{equation}
		W(G,H)\cong\Res_{S'/S} W(G_0\times_S S',H_0\times_S S')\label{eq:resweyl}
	\end{equation}
	(\cite[Lemma 4.1.26]{hayashijanuszewski}). Under the identifications of \eqref{eq:resweyl} and $\iota_{B,H}$ in \eqref{eq:standtype}, we have
	\[w_\theta=w_c\in (\Res_{S'/S} W(G_0\times_S S',H_0\times_S S'))(S).\]		
	The involution on $\Res_{S'/S} W(G_0\times_S S',H_0\times_S S')$ induced from $\theta$ is the Galois involution. With the description above Example \ref{ex:I_G}, we obtain a realization of $\cI_{G,\theta}$ as the fixed point subscheme of $\Res_{S'/S} W(G_0\times_S S',H_0\times_S S')$.

	\subsection{General observation on standard models}
	
	In the rest of this paper, we work with some of the standard $\bZ[1/2]$-forms $G$ of the classical Lie groups in \cite{MR4627704}. Henceforth we put $S=\Spec \bZ[1/2]$. We will also work over $\bZ[1/2,\sqrt{-1}]$ for \'etale local studies. To save space, put
	\[\begin{array}{ccc}
		k=\bZ[1/2],&k'=\bZ[1/2,\sqrt{-1}],&S'=\Spec k'.
	\end{array}\]
	Henceforth let $\theta$ be the standard $\bZ[1/2]$-form of a Cartan involution, i.e., the inverse of the transpose $(-)^T$ or the adjoint $(-)^\ast$ (see \cite[Section 1.5]{MR4627704}). Write $H_{\fun}$ for the standard fundamental Cartan subgroups given in \cite[Section 3.4]{MR4627704}. Let $w_0$ denote the longest element of the Weyl group or its lift given in \cite[Section 4]{MR4627704}.
	
	Let $G$ be any of the standard $\bZ[1/2]$-form of the classical Lie groups. We give a remark on the computation of $\cI_{G,\theta}$ with respect to $H_{\fun}\otimes_k k'$. Let
	\[\Delta^+(G\otimes_k k',H_{\fun}\otimes_k k')\]
	be the $\theta$-stable positive system constructed in \cite{MR4627704}. Then we have $w_\theta=e$ and
	\[\cI_{G,\theta}\otimes_k k'\cong
	\{w\in W(\Delta(G\otimes_k k',H_{\fun}\otimes_k k')):~\theta(w) w=e\}_{S'}\]
	(\cite[\S 3]{MR803346}, \cite[Section 1.6]{MR1066573}). To compute the $k$-form, observe that the transferred Galois involution on $W(\Delta(G\otimes_k k',H\otimes_k k'))$ from the identification
	\[\StandType_{G,\emptyset,\emptyset}\otimes_k k'
	\cong W(\Delta(G\otimes_k k',H_{\fun}\otimes_k k'))_{S'}\]
	is $w\mapsto w_0\bar{w}w_0$ (Remark \ref{rem:Gal}).
	
	\begin{lem}\label{lem:conj}
		We have $\bar{w}=\theta(w)$ for any $w\in W(\Delta(G\otimes_k k',H_{\fun}\otimes_k k'))$. In particular, we have $\bar{w}=w$ if $\theta$ is inner (over the field $\bC$ of complex numbers).
	\end{lem}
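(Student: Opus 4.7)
The plan is to reduce the lemma to a computation on the character lattice of $H_{\fun} \otimes_k k'$. Both the Galois involution $\overline{\phantom{w}}$ and $\theta$ act on this lattice as $\bZ$-linear involutions, and the key point is that their composition equals the central element $-\id$ of $\GL(X^*(H_{\fun} \otimes_k k'))$. Granting this, the first assertion follows at once: for any $w \in W(\Delta(G \otimes_k k', H_{\fun} \otimes_k k'))$, regarded as a linear automorphism of $X^*(H_{\fun} \otimes_k k')$, both $\overline{w}$ and $\theta(w)$ are the conjugates of $w$ by the respective involutions of the lattice, and since these two involutions differ only by a central element of $\GL(X^*(H_{\fun} \otimes_k k'))$, the resulting conjugates coincide.

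To establish $\overline{\phantom{w}} \circ \theta = -\id$ on $X^*(H_{\fun} \otimes_k k')$, I would use the explicit description of $H_{\fun}$ in \cite[Section 3.4]{MR4627704}. For each of the standard classical $\bZ[1/2]$-forms treated there, $H_{\fun}$ decomposes as a product of compact and split factors. A compact factor is a $k$-form of $\bG_m$ cut out by an equation of the shape $a^2 + b^2 = 1$; its base change to $k'$ becomes $\bG_{m,k'}$ via $(a,b) \mapsto a + \sqrt{-1}\,b$, and under this identification the Galois action is inversion $z \mapsto z^{-1}$ while $\theta$ acts trivially. Hence on characters of a compact factor, Galois acts as $-\id$ and $\theta$ as $+\id$, so the composition is $-\id$. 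A split factor is simply $\bG_{m,k}$; on characters Galois acts trivially while $\theta$ acts as $-\id$ (inversion on the torus), and again the composition is $-\id$. The identity therefore holds on all of $X^*(H_{\fun} \otimes_k k')$.

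For the ``in particular'' clause, I would invoke the equal-rank criterion for inner involutions of a connected complex reductive group: $\theta$ is inner over $\bC$ if and only if a maximal torus of $K = G^\theta$ remains maximal in $G$, which for the chosen $H_{\fun}$ amounts to $H_{\fun} \subset K$, i.e.\ to $\theta|_{H_{\fun}} = \id$. In that case $\theta$ acts as the identity on $W$, so the first assertion specializes to $\overline{w} = w$.

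The principal obstacle is the case-by-case decomposition of $H_{\fun}$ into compact and split factors across the standard classical forms; given the explicit models of \cite{MR4627704}, however, this reduces to direct inspection and poses no essential difficulty.
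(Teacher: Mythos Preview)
Your proposal is correct and rests on the same key identity as the paper's proof, namely that the Galois involution and $\theta$ on $X^*(H_{\fun}\otimes_k k')$ differ by the central element $-\id$. The paper packages the deduction slightly differently: rather than arguing by conjugation in $\GL(X^*)$, it tracks the image of the $\theta$-stable positive system $\Delta^+$ under $\bar w$ and under $\theta(w)$ and uses that a Weyl element is determined by where it sends $\Delta^+$; your conjugation argument is a touch more direct since it does not invoke the $\theta$-stability of $\Delta^+$, but the substance is identical.
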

	
	\begin{proof}
		This is straightforward:
		\[\begin{split}
			\bar{w}\Delta^+(G\otimes_k k',H_{\fun}\otimes_k k')
			&=\overline{w\overline{\Delta^+(G\otimes_k k',H_{\fun}\otimes_k k')}}\\
			&=-\overline{w\Delta^+(G\otimes_k k',H_{\fun}\otimes_k k')}\\
			&=\theta(w\Delta^+(G\otimes_k k',H_{\fun}\otimes_k k'))\\
			&=\theta(w)\Delta^+(G\otimes_k k',H_{\fun}\otimes_k k')
		\end{split}\]
		(take the base change to $\bC$ to see the third equality).
	\end{proof}
	
	\begin{cor}\label{cor:Galois}
		The Galois involution on the set
		\[\{w\in W(\Delta(G\otimes_k k',H_{\fun}\otimes_k k')):~\theta(w) w=e\}\]
		is given by $w\mapsto w_0w^{-1}w_0$.
	\end{cor}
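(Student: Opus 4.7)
The plan is to combine the formula $w\mapsto w_0\bar{w}w_0$ for the Galois involution on the ambient Weyl group (recorded in the paragraph above Lemma \ref{lem:conj}) with the equality $\bar{w}=\theta(w)$ of Lemma \ref{lem:conj}, and then restrict to the subset of twisted involutions. Nothing deeper is required.

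More concretely, I would first recall that since the chosen positive system $\Delta^+(G\otimes_k k',H_{\fun}\otimes_k k')$ is $\theta$-stable, the element $w_\theta$ appearing in the formalism above Example \ref{ex:I_G} is trivial. Hence the description there identifies $\cI_{G,\theta}\otimes_k k'$ with the closed subscheme $\{w:\theta(w)w=e\}_{S'}$ of $W(\Delta(G\otimes_k k',H_{\fun}\otimes_k k'))_{S'}$, as already displayed immediately before the corollary. The Galois involution on $\cI_{G,\theta}\otimes_k k'$ is therefore simply the restriction of $w\mapsto w_0\bar{w}w_0$. Substituting $\bar{w}=\theta(w)$ from Lemma \ref{lem:conj} turns this into $w\mapsto w_0\theta(w)w_0$, and on the subset where $\theta(w)=w^{-1}$ this becomes $w\mapsto w_0w^{-1}w_0$, as claimed.

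The only minor check is that the resulting map genuinely stabilizes the subset of twisted involutions; this is automatic since $\cI_{G,\theta}$ is Galois stable by construction, but it can also be verified on the nose using $w_0^2=e$ together with $\theta(w_0)=w_0$ (which holds because $w_0$ sends $\Delta^+$ to $-\Delta^+$ and $\theta$ preserves $\Delta^+$). I do not anticipate any real obstacle here: both inputs are already in hand, and the corollary is a one-line algebraic substitution that essentially repackages Lemma \ref{lem:conj} for use in the subsequent examples.
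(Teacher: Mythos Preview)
Your proposal is correct and matches the paper's approach exactly: the corollary is an immediate consequence of combining the Galois involution formula $w\mapsto w_0\bar{w}w_0$ with Lemma~\ref{lem:conj}'s identity $\bar{w}=\theta(w)$ and then restricting to the locus $\theta(w)w=e$. The paper states it as a corollary without proof precisely because it is this one-line substitution.
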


	Let us also give a note on computation of the Galois action on $\rtype_\emptyset(G,\theta)\otimes_k k'$: Let $H$ be a $\theta$-stable maximal torus of $G$ such that $(G\otimes_k k',H\otimes_k k')$ is split. Choose a positive system $\Delta^+(G\otimes_k k',H\otimes_k k')$. Let $B'$ be the attached Borel subgroup. Take the Weyl group element $u$ with
	\[\overline{\Delta^+(G\otimes_k k',H\otimes_k k')}
	=u\Delta^+(G\otimes_k k',H\otimes_k k').\]
	Pick its lift in $N_G(H)(k')$. For $w\in N_G(H)(k')$, we have
	$\overline{w B'w^{-1}}=\bar{w} uB'u^{-1}\bar{w}^{-1}$.
	In this way, $W(G\otimes_k k',H\otimes_k k')$ is endowed with a Galois involution for $w\mapsto \bar{w} u$.
	
	\begin{ex}\label{ex:conj_fun}
		Put $H=H_{\fun}$. In this case, $u=w_0$ and $\bar{w}=\theta(w)$ for each element
		$w\in W(G,H_{\fun})(k')$.
		Assume that $W_K(G,H_{\fun})\otimes_k k'$ is constant, and that $\theta$ is trivial on $H_{\fun}$. For an element $w\in W(G,H_{\fun})(k')$, we have $\bar{w}u\in W_K(G,H)(k')w $ if and only if $ww_0 w^{-1}\in W_K(G,H)(k')$.
	\end{ex}
	
	For more practical computations, suppose that we are given another $\theta$-stable maximal torus $H_i$ and an element $g_i\in G(k')$ such that
	$g_i (H\otimes_k k')g^{-1}_i=H_i\otimes_k k'$ (here read $i$ just as a formal symbol). In particular, $(G\otimes_k k',H_i\otimes_k k')$ is split. We define a positive system $\Delta^+(G\otimes_k k',H_i\otimes_k k')$ of $(G\otimes_k k',H_i\otimes_k k')$ by transferring that of $(G\otimes_k k',H\otimes_k k')$.
	Namely, $\Delta^+(G\otimes_k k',H_i\otimes_k k')$ consists of the characters of $H_i\otimes_k k'$ defined by
	$g_i\alpha:h\mapsto \alpha(g^{-1}_i hg_i)$ ($\alpha\in\Delta^+(G\otimes_k k',H\otimes_k k')$).
	The transferred Galois involution described above back to $W(G\otimes_k k',H\otimes_k k')$ is given as follows: Set $u'_i=g^{-1}_i \bar{g}_i$. Then $u'_i$ belongs to $N_G(H)(k')$. We denote the corresponding element of the Weyl group by the same symbol.
	
	\begin{prop}\label{prop:transfer}
		In the setting above, the transferred Galois involution on the Weyl group scheme
		$W(G\otimes_k k',H\otimes_k k')$
		is given by $w\mapsto u'_i\bar{w}u$. 
	\end{prop}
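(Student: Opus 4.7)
The plan is to apply the recipe for the Galois involution recalled above Example~\ref{ex:conj_fun} to the pair $(H_i\otimes_k k',B'_i)$, where $B'_i\coloneqq g_i B'g_i^{-1}$, and then to pull the result back along the conjugation isomorphism $\Ad(g_i):W(G\otimes_k k',H\otimes_k k')\xrightarrow{\sim} W(G\otimes_k k',H_i\otimes_k k')$. By construction, the positive system of $(G\otimes_k k',H_i\otimes_k k')$ attached to $B'_i$ is precisely the transferred $\Delta^+(G\otimes_k k',H_i\otimes_k k')$. Hence the recipe produces the Galois involution $w_i\mapsto \bar{w}_i u_i$ on $W(G\otimes_k k',H_i\otimes_k k')$, where $u_i$ is characterized by $\overline{B'_i}=u_i B'_i u_i^{-1}$. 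Pulling back along $\Ad(g_i)$ gives
\[w\mapsto g_i^{-1}\bigl(\overline{g_i w g_i^{-1}}\cdot u_i\bigr)g_i=g_i^{-1}\bar{g}_i\,\bar{w}\,\bar{g}_i^{-1}u_i g_i=u'_i\,\bar{w}\,(\bar{g}_i^{-1}u_i g_i),\]
so the claim reduces to the identity $\bar{g}_i^{-1}u_i g_i=u$, equivalently $u_i=\bar{g}_i u g_i^{-1}$.

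This latter identity is a short direct computation on Borels. By functoriality of Galois conjugation on group operations and the relations $\overline{B'}=uB'u^{-1}$ and $B'_i=g_i B'g_i^{-1}$,
\[\overline{B'_i}=\bar{g}_i\overline{B'}\bar{g}_i^{-1}=\bar{g}_i\,uB'u^{-1}\,\bar{g}_i^{-1}=(\bar{g}_i u g_i^{-1})\,B'_i\,(\bar{g}_i u g_i^{-1})^{-1},\]
where the last equality inserts $g_i^{-1}g_i=e$ to rewrite the conjugation in terms of $B'_i$. Since a Weyl group element is uniquely determined by the Borel containing $H_i\otimes_k k'$ to which it conjugates $B'_i$, comparison with $\overline{B'_i}=u_i B'_i u_i^{-1}$ forces $u_i=\bar{g}_i u g_i^{-1}$ in $W(G\otimes_k k',H_i\otimes_k k')$, and substitution yields the asserted formula.

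No substantive obstacle is anticipated. The only bookkeeping needed is to verify that $u'_i=g_i^{-1}\bar{g}_i$ and $\bar{g}_i u g_i^{-1}$ genuinely normalize $H\otimes_k k'$ and $H_i\otimes_k k'$ respectively; both are immediate consequences of the fact that $H$ and $H_i$ are defined over $k$, together with $g_i(H\otimes_k k')g_i^{-1}=H_i\otimes_k k'$.
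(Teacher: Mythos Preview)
Your proof is correct and follows essentially the same approach as the paper: both arguments compute $\overline{B'_i}$ via $\overline{B'}=uB'u^{-1}$ and an insertion of $g_i^{-1}g_i$, then transfer back through $\Ad(g_i^{-1})$. The only cosmetic difference is that you name the element $u_i$ explicitly and verify $u_i=\bar{g}_i u g_i^{-1}$ as a separate step, whereas the paper folds this into a single chain of equalities.
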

	
	\begin{proof}
		Firstly, observe that the Galois involution on $W(G\otimes_k k',H_i\otimes_k k')$ is given by $w\mapsto \bar{w}\bar{g}_iug^{-1}_i$. In fact, let $B'_i$ be the Borel subgroup attached to $\Delta^+(G\otimes_k k',H_i\otimes_k k')$. Then $B'_i=g_i B' g^{-1}_i$ and we have
		\[\overline{B}'_i
		=\bar{g}_i \bar{B} \bar{g}^{-1}_i
		=\bar{g}_i u B u^{-1} \bar{g}_i
		=g_iu'_iu g^{-1}_iB'_i g_iu^{-1} (u'_i)^{-1} g^{-1}_i
		=\bar{g}_iug^{-1}_iB'_ig_i u^{-1}\bar{g}^{-1}_i.
		\]
		
		Let $w\in N_G(H)(k')$. The transfer of $w$ to $N_G(H_i)(k')$ is $g_i w g^{-1}_i$. Apply the Galois involution given above to obtain
		$\bar{g}_i \bar{w} ug^{-1}_i$.
		Transfer it back to $N_G(H)(k')$ to deduce the assertion.
	\end{proof}
	
	\subsection{Example II}

	Put $G=\GL_n$ ($n\geq 1$). In this case, the assumption of Variant \ref{var:mainthm} holds and $\rtype_\emptyset(G,\theta)\cong \cI_{G,\theta}$ by \cite[10.2 Example]{MR1066573}. Let $H_0$ be the split maximal torus of diagonal matrices. Together with Variant \ref{var:mainthm}, the fact that $(G,H_0)$ is split over $\bZ[1/2]$ implies that the complex $K$-orbits on the complex flag variety of $\GL_n$ are defined over $\bZ[1/2]$. Based on Example \ref{ex:I_G}, we can describe $\cI_{G,\theta}$ as follows: Notice that $\theta$ acts trivially on the Weyl group $W(\Delta(G,H_0))$. For a positive system, we choose the standard one. Then $w_\theta$ is the longest element $w_0$. For computations, we identify $W(\Delta(G,H_0))$ with $\fS_n$. Then we have an isomorphism
	$\cI_{G,\theta}\cong\{w\in \fS_n:~(ww_0)^2=e\}_S$.
	
	\begin{ex}[{\cite[Theorem 1.1]{hayashikgb}}]
		Put $n=3$. Then $w_0=(1~3)$ and
		\[\{w\in \fS_n:~(ww_0)^2=e\}=\{e,(1~2~3),(1~3~2),(1~3)\}.\]
		We totally have four homogeneous $K$-subschemes in $\cB^{\theta\mathrm{-}\std}_G$. The open and closed subschemes correspond to $(1~3)$ and $e$ respectively.
	\end{ex}

	\subsection{Example III}
	Put $G=\Uu^\ast(2n)$. Identify
	$W(\Delta(G\otimes_k k',H_{\fun}\otimes_k k'))$
	with $\fS_{2n}$ through the isomorphism
	$\Uu^\ast(2n)\otimes_k k'\cong \GL_{2n}$
	in \cite[Example 3.3.4]{MR4627704}. 
	In view of \cite[10.4 Example]{MR1066573}, $G\otimes_k k'$ satisfies the condition of Variant \ref{var:mainthm} and
	\[\rtype_\emptyset(G,\theta)\otimes_k k'\cong (\cJ_0w_0)_{S'},\]
	where $\cJ_0$ is the set of fixed point free involutions in $\fS_{2n}$. Therefore the Galois involution is trivial. This shows
	$\rtype_\emptyset(G,\theta)\cong (\cJ_0w_0)_{S}$.

	\subsection{Example IV}
	Put $G=\SL_{2n}$ with $n\geq 1$. Define a torus $H^{\mathrm{bl}}\subset\GL_2$ by
	\[H^{\mathrm{bl}}(R)=\left\{\left(\begin{array}{cc}
		a&b\\
		-b&a
	\end{array}\right)\in\GL_2(R)\right\},\]
	where $R$ runs through commutative $k$-algebras. Set 
	\[g^{\bl}\coloneqq\left(\begin{array}{cc}
		\frac{1}{2} & \sqrt{-1} \\
		\frac{\sqrt{-1}}{2} & 1
	\end{array}\right)\in\SL_2(k')\]
	Then we have
	\[H^{\mathrm{bl}}\otimes_k k'\cong g^{\bl}\diag(\GL_1,\GL_1)(g^{\bl})^{-1}.\]
	
	For $0\leq i\leq n$, define a $\theta$-stable maximal torus $H_i\subset\SL_{2n}$ by
	\[H_i=\diag(\overbrace{H^{\mathrm{bl}},H^{\mathrm{bl}},\ldots,H^{\mathrm{bl}}}^{i},\overbrace{\GL_1,\GL_1,\ldots,\GL_1}^{2n-2i})\cap \SL_{2n}.\]
	We notice that $H_0$ is the split maximal torus of the diagonal matrices. For $0\leq i\leq n$, $H_i\otimes_k k'$ is the conjugate of $H_0\otimes_k k'$ by 
	\begin{equation}
		g(i)\coloneqq\diag(\overbrace{g^{\bl},g^{\bl},\ldots,g^{\bl}}^{i},\overbrace{1,1,\ldots,1}^{2n-2i}).
		\label{eq:translation_matrix}
	\end{equation}
	Moreover, they give rise to an isomorphism $\Tor^\theta_{\SL_{2n}}\cong \{H_0,H_1,\ldots,H_n\}_S$.
	
	To check the local constancy hypothesis, we translate $W_K(G,H_i)\otimes_k k'$ by the matrix \eqref{eq:translation_matrix} to identify it with a subgroup of $W(G,H_0)\otimes_k k'\cong (\fS_{2n})_{S'}$ for $0\leq i\leq n$. If $0\leq i\leq n-1$, let $W_i\subset\fS_{2n}$ be the subgroup generated by $(2j-1~2j)$ ($1\leq j\leq i$), $(2j-1~2j+1)(2j~2j+2)$ ($1\leq j\leq i-1$), and $(j~j+1)$ ($2i+1\leq j\leq 2n-1$). We also let $W_n\subset\fS_{2n}$ be the subgroup generated by $(2j-1~2j)(2j+1~2j+2)$ ($1\leq j\leq n-1$) and $(2j-1~2j+1)(2j~2j+2)$ ($1\leq j\leq n-1$). Then $W_K(G,H_i)\otimes_k k'$ is naturally identified with the constant subgroup $(W_i)_{S'}$. In particular, the local constancy hypothesis holds.
	
	It remains to compute the Galois involution. The direct computation shows $g^{-1}(i)\overline{g(i)}$ represents $u'_i\coloneqq (1~2)(3~4)\cdots(2i-1~2i)\in\fS_{2n}$.
	In view of Proposition \ref{prop:transfer}, the induced Galois involution on $W_i\backslash \fS_{2n}$ is $W_iw\mapsto W_{i}u_i'w$. If $i\neq n$ or $n$ is even then $u'_i\in W_i$ and the Galois involution is trivial; Otherwise, i.e., if $n$ is odd and $i=n$ then $u'_i\not\in W_i$ and the Galois involution is free. This shows
	\[K\backslash\cB_{\SL_{2n}}\cong\begin{cases}
		\coprod_{0\leq i\leq n} (W_{i}\backslash\fS_{2n})_S&(n~\mathrm{is~even})\\
		(\Gamma\backslash (W_{n}\backslash\fS_{2n}) )_{S'}\coprod
		\coprod_{0\leq i\leq n-1} (W_{i}\backslash\fS_{2n})_S&(n~\mathrm{is~odd}),
	\end{cases}\]
	where $\Gamma\backslash (W_{n}\backslash\fS_{2n})$ is the set of Galois orbits in $W_{n}\backslash\fS_{2n}$.

	\subsection{Example V}
	Put $G=\SO(2n+1,1)$. It is easy to check
	\[\begin{array}{c}
		K\backslash\Tor^\theta_G\cong \{H_{\fun}\}_S= S,\\
		W_K(G,H_{\fun})= N_{K}(H_{\fun})/(H_{\fun}\cap K)\cong 
		N_{K^0}(H_{\fun})/(H_{\fun}\cap K^0)\cong W(K^0,T),
	\end{array}\]
	where $K^0$ is the unit component of $K$ (\cite[D\'efinition 3.1]{MR0234961}), and $T$ is the maximal torus of $K^0$ given in \cite{MR4627704}. In particular, the local constancy hypothesis holds. Set 
	\[W\coloneqq\{((\epsilon_i),\sigma)\in\{\pm 1\}^{n+1}\rtimes\fS_{n+1} :~
	\prod_{1\leq i\leq n} \epsilon_i=1\},\]
	\[W_K\coloneqq\{((\epsilon_i),\sigma)\in\{\pm 1\}^{n+1}\rtimes\fS_{n}:~
	\prod_{1\leq i\leq n} \epsilon_i=1\}\subset W.\]
	Here $\fS_n$ is regarded as a subgroup of $\fS_{n+1}$ for $\sigma(n+1)=n+1$ ($\sigma\in\fS_n$). Then $W_K(G,H_{\fun})\otimes_k k'\hookrightarrow W(G,H_{\fun})\otimes_k k'$ is identified with $(W_K)_S\subset W_S$. It is elementary that
	$W'\coloneqq\{(i~n+1)\in\fS_{n+1}:~1\leq i\leq n+1\}\subset W$
	is a complete system of representatives of $W_K\backslash W$. In particular, every element of $W_K\backslash W$ is represented by a transposition.
	
	It remains to compute the Galois action on
	$\rtype_\emptyset(G,\theta)\otimes_k k'\cong W'_{S'}$.
	Notice that under the identification of the character group of $H\otimes_k k'$ with $\bZ^{n+1}$, $\theta$ acts on $\bZ^{n+1}$ by $\diag(I_n,-1)$.
	\begin{description}
		\item[Case 1: $n$ is even] $w_0=\diag(-I_n,1)$ and
		\[\begin{split}
			\bar{w}w_0w^{-1}&=\diag(I_n,-1)w\diag(I_n,-1)w_0w\\
			&=\diag(I_n,-1)w(-I_{n+1})w\\
			&=\diag(-I_n,1)\in W_K
		\end{split}\]
		for $w=(i~n+1)\in W'$ with $1\leq i\leq n+1$.
		\item[Case 2: $n$ is odd] $w_0=-1$ and
		\[\begin{split}
			\bar{w}w_0w^{-1}&=\diag(I_n,-1)w\diag(I_n,-1)w_0w^{-1}\\
			&=\diag(I_n,-1)w\diag(-I_n,1)w^{-1}\\
			&=\diag(I_n,-1)\diag(-I_{i-1},1,-I_{n+1-i})\\
			&=\begin{cases}
				\diag(-I_{i-1},1,-I_{n-i},1)&(1\leq i\leq n)\\
				-1&(i=n+1)
			\end{cases}\\
			&\in W_K
		\end{split}\]
		for $w=(i~n+1)\in W'$ with $1\leq i\leq n+1$.
	\end{description}
	Hence the Galois involution is trivial in both cases and we get
	$\rtype_\emptyset(G,\theta)\cong W'_S$.
	
	\subsection{Example VI}
	Put $G=\SO(2n,1)$ with $n\geq 1$. In this case, $\theta$ acts trivially on $H_{\fun}$ and $W(G,H_{\fun})=W_K(G,H_{\fun})$ (see geometric-fiberwisely). In particular, the assumption of Variant \ref{var:mainthm} holds by \cite[9.15 Corollary]{MR1066573}.
	
	To compute the image of $\varphi$, set
	\[H_0=\diag(\overbrace{\SO(2),\SO(2),\ldots,\SO(2)}^{n-1},1,\SO(1,1))\subset G,\]
	\[g=\diag\left(I_{2n-1},\left(\begin{array}{ccc}
		0 & 0 & -\sqrt{-1} \\
		1 & 0 & 0 \\
		0 & \sqrt{-1} & 0
	\end{array}\right)\right)\in G(k').\]
	Then $H_0$ is $\theta$-stable and
	$K\backslash\Tor^\theta_G\cong\{H_{\fun},H_0\}_S$.
	We also notice that
	\[g(H_{\fun}\otimes_k k')g^{-1}=H_0\otimes_k k'.\]
	Let $B'$ be the $\theta$-stable Borel subgroup attached to the $\theta$-stable positive system of \cite{MR4627704}. Then $gwB'w^{-1}g^{-1}$ with $w\in N_G(H_{\fun})(k')$ geometric-fiberwisely list all the Borel subgroups containing $H_0\otimes_k k'$. For each $w$, the image under $\varphi$ is computed as
	\begin{flalign*}
		&\varphi(gwB'w^{-1}g^{-1})\\
		&=(gwB'w^{-1}g^{-1},\theta(gwB'w^{-1}g^{-1}))
		(B',w^{-1}g^{-1}\theta(g)wB'w^{-1}\theta(g)^{-1}gw).
	\end{flalign*}
	Write $W=\{\pm 1\}^n\rtimes\fS_{n}$ to identify $\StandType_{G,\emptyset,\emptyset}\otimes_kk'$ with $W_{S'}$. Then we have
	\[g^{-1}\theta(g)=\diag(I_{2n-1},-I_2)\in N_G(H_{\fun})(k'),\]
	which corresponds to $(1,1,\ldots,1,-1)\in W$. Hence if we set
	\[I'=\{(\overbrace{1,1,\ldots,1}^{i},-1,\overbrace{1,1,\ldots,1}^{n-i-1})\in W:~
	0\leq i\leq n\}\]
	(if $i=n$, the corresponding element is the unit of $W$),
	$\varphi\otimes_k k'$ maps onto $I'_{S'}$ under the identification
	$\StandType_{G,\emptyset,\emptyset}\otimes_k k'\cong W_{S'}$.
	Since $w_0=-1$, the Galois involution is trivial. We thus conclude
	$\rtype_\emptyset(G,\theta)\cong I'_S$.
	
	\subsection{Example VII}
	
	Put $G=\Uu(p,q)$ with $p\geq q$. Write $n=p+q$. Firstly, recall that we have a natural isomorphism
	\begin{equation}
		\Uu(p,q)\otimes_k k'\cong\GL_n,\label{eq:complexification}
	\end{equation}
	whose restriction to $H_{\fun}\otimes_k k'$ is onto the maximal torus of diagonal matrices in $\GL_n$ (\cite[Examples 3.3.2, 3.4.3]{MR4627704}).	
	
	We next construct non-fundamental $\theta$-stable maximal tori of $G$. For this, recall that the conjugate action on $k'$ over $k$ induces an involution on $\Res_{k'/k}\GL_1$ which we denote by $\bar{}$. Let $H^{\bl}\subset\Uu(1,1)$ be the image of the monomorphism \[\Res_{k'/k} \GL_1\hookrightarrow \Uu(1,1)\]
	define by
	\[z\mapsto \left(\begin{array}{cc}
		1 & -1 \\
		1 & 1
	\end{array}\right)\diag(\bar{z},z^{-1})\left(\begin{array}{cc}
		1 & -1 \\
		1 & 1
	\end{array}\right)^{-1}
	=\frac{1}{2}\left(\begin{array}{cc}
		\bar{z}+z^{-1} & \bar{z}-z^{-1} \\
		\bar{z}-z^{-1} & \bar{z}+z^{-1}
	\end{array}\right).
	\]
	For $0\leq i\leq q$, let $H_i$ be the $\theta$-stable maximal torus consisting of the matrices $(a_{st})$ with the following properties:
	\begin{enumerate}
		\item[(i)] For $1\leq j\leq p-q+i$ or $p+1\leq j\leq p+i$, we have $a_{jj}\in\Uu(1)$,
		\item[(ii)] For $1\leq j\leq i$, 
		\[\left(\begin{array}{cc}
			a_{p-q+i+j~p-q+i+j} & a_{p-q+i+j~n-q+i+j} \\
			a_{n-q+i+j~p-q+i+j} & a_{n-q+i+j~n-q+i+j}
		\end{array}\right)\in H^{\bl},\]
		\item[(iii)] the other entries are zero.
	\end{enumerate}
	Then we have $K\backslash\Tor^\theta_G\cong \{H_0,\ldots,H_q\}_S$.
	
	To translate $H_q=H_{\fun}$ to $H_i$ over $k'$, set $g_i\in G(k')$ as
	\[g_i=\left(\begin{array}{cccc}
		I_{p-q+i} & 0 & 0 & 0 \\
		0 & \frac{1}{4}(3\otimes 1-\sqrt{-1}\otimes \sqrt{-1})I_{q-i} & 0 & -\frac{1}{4}(-1\otimes 1+3\sqrt{-1}\otimes \sqrt{-1})I_{q-i} \\
		0 & 0 & I_{i} & 0 \\
		0 & \frac{1}{4}(1\otimes 1-3\sqrt{-1}\otimes \sqrt{-1})I_{q-i} & 0 & \frac{1}{4}(3\otimes 1-\sqrt{-1}\otimes \sqrt{-1})I_{q-i}
	\end{array}\right).\]
	Then we have $H_i\otimes_k k'\cong g_i (H_q\otimes_k k')g^{-1}_i$. To check this, it should deserve to note the following basic observation: Under the isomorphism \eqref{eq:complexification}, $H^{\bl}\otimes_k k'$ is identified with the torus
	\[R\mapsto \left\{\left(\begin{array}{cc}
		x&y\\
		y&x
	\end{array}\right)\in\GL_2(R)\right\},\]
	where $R$ runs through all commutative $k'$-algebras. Let $g^{\mathrm{bl}}\in\Uu(1,1;k')$ be the preimage of 
	\[\left(\begin{array}{cc}
		1& -1\\
		1& 1
	\end{array}\right)\in\GL_2(k)\]
	along the isomorphism \eqref{eq:complexification}. Explicitly, we have
	\[g^{\mathrm{bl}}=\frac{1}{4}\left(\begin{array}{cc}
		3\otimes 1-\sqrt{-1}\otimes \sqrt{-1} & -1\otimes 1+3\sqrt{-1}\otimes \sqrt{-1} \\
		1\otimes 1-3\sqrt{-1}\otimes \sqrt{-1} & 3\otimes 1-\sqrt{-1}\otimes \sqrt{-1}
	\end{array}\right).\]
	One can easily check
	$g^{\mathrm{bl}}\diag(\Uu(1)\otimes_k k',\Uu(1)\otimes_k k')(g^{\mathrm{bl}})^{-1}
	=H^{\mathrm{bl}}\otimes_k k'$
	(see it in $\GL_2$).
	
	Let $W_{K,i}$ be the subgroup of $\fS_{n}$ generated by $\fS_{p-q+i}$, $\fS_{q-i}\ltimes\{\pm 1\}^{q-i}$, and $\fS_{i}$, which are regarded as subgroups of $\fS_n$ as follows:
	\begin{itemize}
		\item For $\sigma\in\fS_{p-q+i}$,
		\[\sigma(j)=\begin{cases}
			\sigma(j)&(1\leq j\leq p-q+i)\\
			j&(p-q+i+1\leq j\leq n).
		\end{cases}\]
		\item For $\sigma\in\fS_{q-i}$,
		\[\sigma(j)=\begin{cases}
			\sigma(j-(p-q+i))+(p-q+i)&(p-q+i+1\leq j\leq p)\\
			\sigma(j-(n-q+i))+(n-q+i)&(n-q+i+1\leq j\leq n)\\
			j&(\mathrm{otherwise}).
		\end{cases}\]
		\item For $(\epsilon_s)\in\{\pm 1\}^i$,
		\[(\epsilon_s)(j)=\begin{cases}
			j+q&(p-q+i+1\leq j\leq p,~\epsilon_{j-(p-q+i)}=-1)\\
			j-q&(n-q+i+1\leq j\leq n,~\epsilon_{j-(n-q+i)}=-1)\\
			j&(\mathrm{otherwise}).
		\end{cases}\]
		\item For $\sigma\in \fS_{i}$,
		\[\sigma(j)=
		\begin{cases}
			\sigma(j-p)+p&(p+1\leq j\leq n-q+i)\\
			j&(\mathrm{otherwise}).
		\end{cases}
		\]
	\end{itemize}
	Then one can check $g^{-1}_i(W_K(G,H_i)\otimes_k k')g_i\cong (W_{K,i})_{S'}$.
	In particular, the local constancy hypothesis holds. We also have
	\[\rtype_\emptyset(G,\theta)\otimes_k k'\cong
	\coprod_{0\leq i\leq q} (W_{K,i}\backslash\fS_n)_{S'}.\]
	Observe that if we identify the Weyl group of $\Uu(1,1)\otimes_k k'$ with respect to the maximal torus of diagonal matrices with $\fS_2$,
	$(g^{\bl})^{-1}\bar{g}^{\bl}$
	represents $(1~2)\in\fS_2$. Therefore $u'_i$ in Proposition \ref{prop:transfer} is
	\[(p-q+i+1~n-q+i+1)(p-q+i+2~n-q+i+2)\cdots(p~n)\in W_{K,i}.\]
	As a result, the Galois involution is given by
	$W_{K,i}w\mapsto W_{K,i}ww_0$
	(recall that $\bar{w}=w$ since the conjugate action on the character group of $H_{\fun}\otimes_k k'$ is $-1$).
	
	The coset $W_{K,i}w$ is fixed by the Galois involution if and only if $ww_0w^{-1}\in W_{K,i}$. Set
	\[I_i=\{W_{K,i}w\in W_{K,i}\backslash \fS_n:~ww_0w^{-1}\in W_{K,i}\},\]
	\[I'_i=\{W_{K,i}w\in W_{K,i}\backslash \fS_n:~ww_0w^{-1}\not\in W_{K,i}\}.\]
	Let $\Gamma\backslash I'$ be the set of Galois orbits in $I'$. Then we have
	\[\rtype_\emptyset(G,\theta)
	\cong \coprod_{0\leq i\leq q}
	((I_i)_S\coprod (\Gamma\backslash I'_i)_{S'}).\]
	
	\begin{rem}
		Notice that the conjugacy class of $w_0$ consists of the fixed point free involutions (resp.~the involutions, each of which has a unique fixed element) if $n$ is even (resp.~odd). It is easy to show that $I_i=\emptyset$ if and only if both $p-q$ and $i$ are even.
	\end{rem}
	
	Let us record:
	
	\begin{prop}\label{prop:S_0}
		Let $S_0\subset\fS_n$ be the set of involutions with $(p-q)$ fixed points. 
		\begin{enumerate}
			\item The element $g^{-1}_0\theta(g_0)\in N_G(H_{\fun})(k')$ represents $\prod_{j=1}^{q}(p-q+j~n-q+j)$ (recall $p<n-q+1$ to see that the product is independent of the order).
			\item The map
			\[W_{K,0}\backslash \fS_n\to \fS_n;~w\mapsto w^{-1} \left(\prod_{i=j}^{q}(p-q+j~n-q+j)\right)w\]
			is a bijection onto $S_0$.
			\item The open subscheme $(\varphi\otimes_kk')^{-1}((S_0)_{S'})\subset\cB^{\theta\mathrm{-}\std}_{G\otimes_k k'}$ consists of the Borel subgroups which contain $H_0\otimes_k k'$ up to \'etale local $(K\otimes_k k')$-conjugacy. Moreover, it descends to an isomorphism
			\[(W_{K}(G,H_0)\backslash W(G,H_0))\otimes_k k'\cong 
			(K\otimes_k k')\backslash(\varphi\otimes_kk')^{-1}((S_0)_{S'})
			\cong (S_0)_{S'}.
			\]
			\item The Galois action on the $S'$-scheme $(\fS_n)_{S'}\cong\StandType_{G,\emptyset,\emptyset}\otimes_k k'$ is given by
			$w\mapsto w_0ww_0$.
		\end{enumerate}
	\end{prop}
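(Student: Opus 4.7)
The plan is to reduce parts (1)--(3) to the explicit Cayley-transform structure of $g_0$, and to deduce part (4) from the general formulas already recorded in the section. For part (1), I would compute $g_0^{-1}\theta(g_0)$ directly. Since $\theta$ on $\Uu(p,q)$ is inner, realized by conjugation by the signature matrix $I_{p,q} = \diag(I_p, -I_q) \in H_\fun$, we have $\theta(g_0) = I_{p,q}\, g_0\, I_{p,q}^{-1}$, so $g_0^{-1}\theta(g_0) = (g_0^{-1} I_{p,q} g_0) \cdot I_{p,q}^{-1}$. Both $g_0$ and $\theta(g_0)$ conjugate $H_\fun \otimes_k k'$ to $H_0 \otimes_k k'$, so the product lies in $N_G(H_\fun)(k')$. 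A block computation shows that $I_{p,q}$ flips the sign of the ``lower'' row of each Cayley block $g^{\bl}$, which after applying $g_0^{-1}$ produces the transposition $(p-q+j~n-q+j)$ at the $j$th block; aggregating over $j = 1, \ldots, q$ yields the claimed Weyl element. Conceptually, this is the Weyl element realizing $\theta$'s action on $H_0^-$ via the Cayley transform.

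For part (2), the element $c := \prod_{j=1}^q (p-q+j~n-q+j)$ is an involution with exactly $p - q$ fixed points, hence its $\fS_n$-conjugacy class is precisely $S_0$, so the map $w \mapsto w^{-1}cw$ surjects onto $S_0$. For well-definedness and injectivity on $W_{K,0} \backslash \fS_n$, one verifies $W_{K,0} = Z_{\fS_n}(c)$. The centralizer of $c$ is $\fS_{p-q} \times (\fS_2 \wr \fS_q)$, acting on the $p - q$ fixed points and on the $q$ pairs $\{p-q+j,\, n-q+j\}$ respectively. Comparing with the generators of $W_{K,0}$ at $i = 0$ --- namely $\fS_{p-q}$ on the fixed points, $\fS_q$ permuting the pairs, $\{\pm 1\}^q$ swapping within pairs, and the vacuous $\fS_0$ --- yields the equality.

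For part (3), every Borel of $G \otimes_k k'$ containing $H_0 \otimes_k k'$ is étale-locally of the form $g_0 w B' w^{-1} g_0^{-1}$ for some $w \in N_G(H_\fun)(k')$, by transitivity of $W(G \otimes_k k', H_\fun \otimes_k k')$ on the Borel subgroups containing $H_\fun \otimes_k k'$. The general formula for $\varphi$ (as computed in Example VI, adapted to $g = g_0$), together with part (1), shows that under $\iota_{B', H_\fun \otimes_k k'}$ its $\varphi$-image equals $w^{-1} c w$, which by part (2) ranges exactly over $(S_0)_{S'}$. Conversely, the analogous computation with $g_i$ for $i > 0$ produces involutions with $p - q + 2i > p - q$ fixed points, landing outside $S_0$. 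Hence $(\varphi \otimes_k k')^{-1}((S_0)_{S'})$ consists precisely of Borels containing $H_0 \otimes_k k'$ up to étale local $(K \otimes_k k')$-conjugacy, and taking the $K$-quotient together with part (2) gives the claimed chain of isomorphisms, with the intermediate identification realized by conjugation by $g_0$.

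For part (4), the paragraph preceding Lemma \ref{lem:conj} records that the transferred Galois involution on $W(\Delta(G \otimes_k k', H_\fun \otimes_k k')) \cong \fS_n$ via $\iota_{B', H_\fun \otimes_k k'}$ is $w \mapsto w_0 \bar{w} w_0$. Since $\theta$ on $\Uu(p,q)$ is inner, Lemma \ref{lem:conj} yields $\bar{w} = \theta(w) = w$, so the Galois action simplifies to $w \mapsto w_0 w w_0$. The main obstacle is the matrix computation in part (1): the explicit form of $g_0^{-1}\theta(g_0)$ requires careful tracking of the $k'/k$-structure on $\Uu(p,q)$ and the tensor entries defining $g_0$, even though the Cayley-transform picture pins down the final answer.
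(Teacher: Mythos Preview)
Your proposal is correct and follows essentially the same approach as the paper's proof: for (1) you compute $g_0^{-1}\theta(g_0)$ using that $\theta$ is conjugation by $\diag(I_p,-I_q)$, for (2) you give the natural elementary argument (identifying $W_{K,0}$ with the centralizer of $c$), for (3) you combine (1), (2) with the explicit form of $\varphi$ (the paper phrases the last step as an appeal to Proposition~\ref{prop:singleorbit}, which is what your ``taking the $K$-quotient'' step amounts to), and for (4) you invoke Remark~\ref{rem:Gal} together with Lemma~\ref{lem:conj}. The only difference is that you spell out considerably more detail than the paper's terse proof, which simply declares (1) and (2) ``elementary'' and cites the relevant earlier results for (3) and (4).
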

	
	We recall that $(S_0)_{S'}\subset(\fS_n)_{S'}$. Since $\varphi$ is defined over $k$, the Galois action on $(S_0)_{S'}$ induced from the isomorphism of (3) is computed by (4). The advantage of this expression over the involution on $W_{K,0}\backslash \fS_n$ is that no quotient is involved.
	
	\begin{proof}
		The assertions (1) and (2) are elementary. For (1), use the fact that $\theta$ is equal to the conjugation by $\diag(I_p,-I_q)$, and compute $g^{-1}_0\theta(g_0)$ in $\GL_n(k')$ if necessary. Part (3) follows as a consequence of (1), (2), and Proposition \ref{prop:singleorbit} (recall the definition of $\varphi$). For (4), see Remark \ref{rem:Gal}. Since the conjugation acts on the character group of $H_{\fun}\otimes_kk'$ is $-1$. We thus get $\bar{w}=w$ for $w\in\fS_n$.
	\end{proof}

	\begin{ex}
		Put $q=1$. Then we have
		$S_0=\{(i~j)\in\fS_{p+1}:~1\leq i<j\leq p+1\}$.
		The involution on $S_0$ determined by Proposition \ref{prop:S_0} (4) is
		\[(i~j)\mapsto (p+2-i~p+2-j)~(1\leq i<j\leq p+1).\]
		Since $i<j$, the transpose $(i~j)$ is fixed by this involution if and only if $i+j=p+2$. We thus obtain
		\[W_{K}(G,H_0)\backslash W(G,H_0)\cong 
		K\backslash\varphi^{-1}((S^1_0)_S\coprod (S^2_0)_{S'})
		\cong (S^1_0)_S\coprod (S^2_0)_{S'},\]
		where
		\[S^1_0=\{(i~j)\in\fS_{p+1}:~1\leq i<j\leq p+1,~i+j=p+2\},\]
		\[S^2_0=\{\{(i~j),~(p+2-j~p+2-i)\}\subset\fS_{p+1}:~1\leq i<j\leq p+1,~i+j<p+2\}.\]
		For people who still hope to determine $I_0$ and $\Gamma\backslash I'_0$, we note that the inverse map of Proposition \ref{prop:S_0} (2) is given by $(i~j)\mapsto W_{K,0}(j~p+1)(i~p)$. This implies
		\[I_0=\{W_{K,0}(j~p+1)(i~p)\in W_{K,0}\backslash \fS_n:~1\leq i<j\leq p+1,~i+j=p+2\},\]
		\[\Gamma \backslash I'_0=\left\{\left\{\begin{array}{c}
			W_{K,0}(j~p+1)(i~p),\\
			W_{K,0}(p+2-i~p+1)(p+2-j~p)
		\end{array}\right\}\subset W_{K,0}\backslash \fS_n:~
		\begin{array}{l}
			1\leq i<j\leq p+1,\\
			i+j<p+2
		\end{array}
		\right\}.\]

		Set $S_1=\{e\}$. Then we have
		$\cI_{G,\theta}\otimes_k k'\cong (S_0)_{S'}\coprod (S_1)_{S'}$.
		It is evident that the Galois action on $(S_1)_{S'}\subset (\fS_n)_{S'}$ is trivial. We thus get
		$\cI_{G,\theta}\cong (S^1_0)_S\coprod (S^2_0)_{S'}\coprod (S_1)_{S}$.
		In view of the first paragraph, it remains to study
		\[K\backslash rt^{-1}((S_1)_S)\cong W_{K}(G,H_1)\backslash W(G,H_1).\]
		Note that $rt^{-1}((S_1)_S)$ is the moduli scheme of $\theta$-stable Borel subgroups.
		We plan to achieve this by computing $I_1$ and $\Gamma\backslash I'_1$ directly.
		
		Recall that $W_{K,1}=\fS_p$. The transpositions
		$(j~p+1)\in\fS_{p+1}$ ($1\leq j\leq p+1$) form a complete system of representatives of $\fS_p\backslash\fS_{p+1}$. For $1\leq j\leq p+1$, one can see by transposing $p+1$ that
		\[(j~p+1)w_0 (j~p+1)\in\begin{cases}
			\fS_p&(p~\mathrm{is}~\mathrm{even},~j=\frac{p}{2}+1)\\
			\fS_p(p+2-i~p+1)&(\mathrm{otherwise}).
		\end{cases}\]
		Hence we obtain
		\[I_1=\begin{cases}
			\emptyset&(p~\mathrm{is}~\mathrm{odd})\\
			\left\{\left(\frac{n}{2}+1~n+1\right)\right\}
			&(p~\mathrm{is}~\mathrm{even}),
		\end{cases}\]
		\[\Gamma\backslash I'_1=\begin{cases}
			\left\{\{(j~n+1),~(n+2-j~n+1)\}\subset\fS_n:~
			1\leq j\leq \frac{n+1}{2}\right\}&(p~\mathrm{is}~\mathrm{odd})\\
			\left\{\{(j~n+1),~(n+2-j~n+1)\}\subset\fS_n:~1\leq j\leq \frac{n}{2}\right\}
			&(p~\mathrm{is}~\mathrm{even}).
		\end{cases}\] 
	\end{ex}

	\section*{Declaration of competing interest}
	
	The author declares no competing interests.
	
	\section*{Data availability}
	
	No data was used for the research described in the article.

\end{document}